\newtheorem{theorem}{Theorem}[section]
\newtheorem{proposition}[theorem]{Proposition}
\newtheorem{definition}[theorem]{Definition}
\newtheorem{lemma}[theorem]{Lemma}
\newtheorem{corollary}[theorem]{Corollary}
\newtheorem*{remark*}{Remark}
\newcommand{\ccirc}{\mathbin{\mathchoice
  {\xcirc\scriptstyle}
  {\xcirc\scriptstyle}
  {\xcirc\scriptscriptstyle}
  {\xcirc\scriptscriptstyle}
}}
\newcommand{\xcirc}[1]{\vcenter{\hbox{$#1\circ$}}}
\newcommand{\p}{\mathbb{P}}
\newcommand{\m}{\mu_{n, q}}
\newcommand{\n}{\mu_{n, q_n}}
\newcommand{\inv}{l}
\newcommand{\IV}{\text{Inv}}
\newcommand{\lis}{\text{LIS}}
\newcommand{\lds}{\text{LDS}}
\newcommand{\lcs}{\text{LCS}}
\newcommand{\bs}{\boldsymbol}
\begin{document}

\begin{frontmatter}

\title{The Length of the Longest Common Subsequence of Two Independent Mallows Permutations}
\runtitle{Longest Common Subsequence of Mallows Permutations}


\author{\fnms{Ke} \snm{Jin}\corref{}\ead[label=e1]{kejin@udel.edu}\thanksref{t1}}
\thankstext{t1}{Supported by NSF grant DMS-1261010 and Sloan Research Fellowship}
\address{Department of Mathematical Sciences, University of Delaware\\ \printead{e1}}
\affiliation{University of Delaware}

\runauthor{Ke Jin}

\begin{abstract}
The Mallows measure is a probability measure on $S_n$ where the probability of a permutation $\pi$ is proportional to $q^{l(\pi)}$ with $q > 0$ being a parameter and $l(\pi)$ the number of inversions in $\pi$. We prove a weak law of large numbers for the length of the longest common subsequences of two independent permutations drawn from the Mallows measure, when $q$ is a function of $n$ and $n(1-q)$ has limit in $\mathbb{R}$ as $n \to \infty$.
\end{abstract}

\begin{keyword}[class=MSC]
\kwd{60F05}
\kwd{60B15}
\kwd{05A05}
\end{keyword}

\begin{keyword}
\kwd{longest common subsequence}
\kwd{longest increasing subsequence}
\kwd{Mallows measure}
\end{keyword}

\end{frontmatter}


\section{Introduction}\label{S1}

\subsection{Background}
The longest common subsequence(LCS) problem is a classical problem which has application in fields such as molecular biology (see, e.g., \cite{pevzner}) , data comparison and software version control. Most previous works on the LCS problem are focused on the case when the strings are generated uniformly at random from a given alphabet. Notably, Chv\'{a}tal and Sankoff \cite{Sankoff} proved that the expected length of the LCS of two random $k$-ary sequences of length $n$ when normalized by $n$ converges to a constant $\gamma_k$. Since then, various endeavors \cite{Deken, Dancik, Paterson, Lueker} have been made to determine the value of $\gamma_k$. The exact values of $\gamma_k$ are still unknown. The known lower and upper bounds \cite{Lueker} for $\gamma_2$ are
\[
0.788071 < \gamma_2 < 0.826280.
\]
In contrast to the LCS of two random strings, the LCS of two permutations is well connected to the longest increasing subsequence(LIS) problem (cf.\,Proposition 3.1 in \cite{Houdre}). This can be seen from the following two facts,
\begin{itemize}
\item For any $\pi \in S_n$, the length of the LCS of $\pi$ and the identity in $S_n$ is equal to the length of the LIS of $\pi$.
\item For any $\pi, \tau\in S_n$, the length of the LCS of $\pi$ and $\tau$ is equal to the length of the LCS of $\tau^{-1}\ccirc\pi$ and the identity in $S_n$.
\end{itemize}
From the above two properties, it is easily seen that, if $\pi, \tau$ are independent and either $\pi$ or $\tau$ is uniformly distributed on $S_n$ the length of the LCS of $\pi$ and $\tau$ has the same distribution as the length of the LIS of a uniformly random permutation. The length of the LIS of a uniformly random permutation has been well studied with major contributions from Hammersley \cite{Hammersley}, Logan and Shepp \cite{Logan}, Vershik and Kerov \cite{VK} and culminating with the groundbreaking work of Baik, Deift and Johansson \cite{Baik} who prove that, under proper scaling, the length of the LIS converges to the Tracy-Widom distribution. Therefore, the length of the LCS of two permutations is only of interest when both permutations are non-uniformly distributed.
In this paper we study the length of the LCS of two independent permutations drawn from the Mallows measure. 
\begin{definition}\label{DD}
Given $\pi \in S_n$, the inversion set of $\pi$ is defined by
\[
\IV(\pi) \coloneqq \{ (i, j) : 1 \le i < j \le n \text{ and } \pi(i) > \pi(j) \},
\]
and the inversion number of $\pi$, denoted by $\inv(\pi)$, is defined to be the cardinality of $\IV(\pi)$.
\end{definition}
The Mallows measure on $S_n$ is introduced by Mallows in \cite{mallows1957}. For $q > 0$, the $(n, q)$ - Mallows measure on $S_n$ is given by
\[
\m(\pi) \coloneqq \frac{q^{\inv(\pi)}}{Z_{n, q}},
\]
where $Z_{n, q}$ is the normalizing constant. In other words, under the Mallows measure with parameter $q>0$, the probability of a permutation $\pi$ is proportional to $q^{\inv(\pi)}$. Mallows measure has been used in modeling ranked and partially ranked data (see, e.g., \cite{CR, FV, Ma}).
\begin{definition}
For any $\pi, \tau \in S_n$, define the length of the longest common subsequence of $\pi$ and $\tau$ as follows,
\begin{align*}
\lcs(\pi, \tau) \coloneqq \max(m : \exists\,i_1 < \cdots <& i_m \text{ and } j_1 < \cdots < j_m \\
&\text{ such that } \pi(i_k) = \tau(j_k)\text{ for all } k \in [m]).
\end{align*}
\end{definition}
Given the close connection of the LCS of two permutations and the LIS problem, to prove our results, we are able to make use of  the techniques developed in \cite{MuellerStarr, Naya} in which weak laws of large numbers of the length of the LIS of permutation under Mallows measure have been proven for different regimes of $q$.

\subsection{Results}

Before stating the main theorem, we introduce the following lemma proved in \cite{KeEm}, which shows the convergence of the empirical measure of a collection of random points defined by two independent Mallows permutations.
\begin{lemma}\label{L100}
Suppose that $\{q_n\}_{n=1}^{\infty}$ and $\{q'_n\}_{n=1}^{\infty}$ are two sequences such that $\lim_{n\to\infty} n (1-q_n) = \beta$  and
$\lim_{n\to\infty} n (1-q'_n) = \gamma$, with $\beta$, $\gamma \in \mathbb{R}$. Let $\p_n$ denote the probability measure on $S_n \times S_n$
such that $\p_n\big((\pi, \tau)\big) = \n(\pi) \cdot \mu_{n, q'_n}(\tau)$, i.~e.~$\p_n$ is the product measure of $\n$ and $\mu_{n, q'_n}$.
For any $R = (x_1, x_2]\times (y_1, y_2] \subset [0, 1]\times [0, 1]$, we have
\begin{equation}
\lim_{n \to \infty} \p_n \left(\left|\,\frac{1}{n}\sum_{i = 1}^{n} \mathds{1}_R\left(\frac{\pi(i)}{n}, \frac{\tau(i)}{n}\right) - \int_{R} \rho(x, y)\, dxdy\, \right| > \epsilon \right) = 0,  \label{eq:L100a}
\end{equation}
for any $\epsilon > 0$, with
\begin{equation}
\rho(x, y) \coloneqq \int_{0}^{1} u(x, t, \beta)\cdot u(t, y, \gamma)\,dt, \label{eq:L100z}
\end{equation}
where
\begin{equation}
u(x,y, \beta) \coloneqq \frac{(\beta/2) \sinh(\beta/2)}
{\left(e^{\beta/4} \cosh(\beta[x-y]/2)-e^{-\beta/4}\cosh(\beta[x+y-1]/2)\right)^2}, \label{eq:M1}
\end{equation}
for $\beta \neq 0$, and $u(x, y, 0) \coloneqq 1$.
\end{lemma}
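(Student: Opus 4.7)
The strategy is to reduce to the single-Mallows law of large numbers for the one-point empirical measure, combined with the independence of $\pi$ and $\tau$. For one Mallows permutation $\pi\sim\n$ with $n(1-q_n)\to\beta$, it is known (as in \cite{MuellerStarr} and related work) that for any rectangle $R'\subset[0,1]^2$,
\[
\frac{1}{n}\sum_{i=1}^{n}\mathds{1}_{R'}\bigl(i/n,\pi(i)/n\bigr)\;\longrightarrow\;\int_{R'} u(s,y,\beta)\,ds\,dy
\]
in probability, and similarly for $\tau$ with $u(\cdot,\cdot,\gamma)$. The key algebraic observation is that $u(x,y,\beta)=u(y,x,\beta)$ (immediate from \eqref{eq:M1} since both $\cosh$ arguments are symmetric under $x\leftrightarrow y$), hence
\[
\rho(x,y)=\int_0^1 u(t,x,\beta)\,u(t,y,\gamma)\,dt.
\]
Thus $\rho$ is the marginal in $(x,y)$ of the 3D density $u(t,x,\beta)\,u(t,y,\gamma)$ on $[0,1]^3$ (whose marginal in $t$ is uniform), so the claim can be read as convergence of the 3D empirical measure of triples $(i/n,\pi(i)/n,\tau(i)/n)$, followed by projection onto the last two coordinates.

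To execute this, fix $R=(x_1,x_2]\times(y_1,y_2]$ and a large integer $N$, and partition $[0,1]=\bigsqcup_{k=1}^N I_k$ with $I_k=((k-1)/N,k/N]$. Decompose
\[
S_n\coloneqq\frac{1}{n}\sum_{i=1}^{n}\mathds{1}_R\bigl(\pi(i)/n,\tau(i)/n\bigr)=\sum_{k=1}^N T_k,
\]
where $T_k\coloneqq\frac{1}{n}\sum_i \mathds{1}_{I_k}(i/n)\mathds{1}_{(x_1,x_2]}(\pi(i)/n)\mathds{1}_{(y_1,y_2]}(\tau(i)/n)$. By independence of $\pi$ and $\tau$ together with the one-point convergences for each (applied separately to the $\pi$ and $\tau$ indicator factors), a Riemann-sum computation yields
\[
\E[S_n]\;\longrightarrow\;\int_0^1\Bigl(\int_{x_1}^{x_2} u(s,x,\beta)\,dx\Bigr)\Bigl(\int_{y_1}^{y_2}u(s,y,\gamma)\,dy\Bigr)ds=\int_R\rho(x,y)\,dx\,dy,
\]
the last equality using the symmetry noted above.

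The main obstacle is upgrading this convergence in mean to convergence in probability. One route is to condition on $\pi$ and exploit independence: for fixed $\pi$, the weights $w_i(\pi)\coloneqq\mathds{1}_{I_k}(i/n)\mathds{1}_{(x_1,x_2]}(\pi(i)/n)$ are bounded and $\pi$-measurable, so a weighted version of the single-Mallows LLN applied to $\tau$ should give $T_k-\E[T_k\mid\pi]\to 0$ in probability; the conditional expectation is itself a weighted sum in $\pi$ (with $\tau$-integrated weights), and a second application of the single-Mallows LLN (now to $\pi$) identifies its limit as the mean computed above. An alternative is Chebyshev: by independence of $\pi,\tau$, $\var(S_n)$ decomposes into a difference of products of two-point correlations for $\pi$ and for $\tau$, and goes to $0$ provided one has Mallows two-point decorrelation of the form $\p(\pi(i)/n\in A,\pi(j)/n\in A)-\p(\pi(i)/n\in A)\p(\pi(j)/n\in A)\to 0$ on average over $i,j$. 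This decorrelation is essentially the content of the empirical-measure convergence established in \cite{MuellerStarr}, so either route ultimately reduces to known single-Mallows input.
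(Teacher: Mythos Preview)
The paper does not prove this lemma: it is introduced as ``the following lemma proved in \cite{Ke}'' and is simply quoted as input. There is therefore no proof in the present paper to compare your proposal against.

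On its own merits, your outline identifies the right ingredients: the symmetry $u(x,y,\beta)=u(y,x,\beta)$ is correct and gives the identification of the limit as $\int_R\rho$, and reducing to Starr's single-permutation empirical-measure convergence via independence of $\pi$ and $\tau$ is the natural route. Your computation of $\E[S_n]$ is fine.

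The genuine gap is in the concentration step. Both of your suggested routes currently rest on an unproved claim. For the conditioning route, you invoke a ``weighted version of the single-Mallows LLN applied to $\tau$'' with weights $w_i(\pi)=\mathds{1}_{(x_1,x_2]}(\pi(i)/n)$; but these weights select a \emph{random, non-contiguous} subset of indices, and the Starr/Mueller--Starr LLN you cite is stated only for rectangles in $(i/n,\tau(i)/n)$-space, i.e.\ for index sets that are intervals. You need an argument that, conditionally on $\pi$, the sum $\frac{1}{n}\sum_{i\in A}\mathds{1}_{(y_1,y_2]}(\tau(i)/n)$ concentrates for the particular random subset $A=\{i:\pi(i)/n\in(x_1,x_2]\}$; this does not follow immediately from rectangle-wise convergence. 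For the Chebyshev route, writing $a_{ij}=\E[X_iX_j]$, $b_{ij}=\E[Y_iY_j]$ one gets
\[
\var(S_n)=\frac{1}{n^2}\sum_{i,j}\Bigl(\cov(X_i,X_j)\cov(Y_i,Y_j)+\cov(X_i,X_j)b_ib_j+a_ia_j\cov(Y_i,Y_j)\Bigr).
\]
The last two terms are variances of weighted sums with \emph{deterministic} continuous weights and do go to zero by approximating the weight function by step functions and applying the rectangle LLN. The cross term $\frac{1}{n^2}\sum_{i,j}\cov(X_i,X_j)\cov(Y_i,Y_j)$, however, is not controlled by knowing only that $\frac{1}{n^2}\sum_{i,j}\cov(X_i,X_j)\to 0$ and likewise for $Y$; you would need control of $\frac{1}{n^2}\sum_{i,j}|\cov(X_i,X_j)|$ (or a sign/negative-association argument), which is a genuinely stronger decorrelation input than what empirical-measure convergence alone gives. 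So as written the proposal is a reasonable plan but not yet a proof; closing it requires either a pointwise two-point covariance bound for Mallows permutations or a more careful coupling argument.
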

The density $u(x, y, \beta)$ in (\ref{eq:M1}), obtained by Starr in \cite{Starr}, is the limiting distribution of the empirical measure induced by Mallows permutation when the parameters $q_n$ satisfy that $\lim_{n \to \infty} n (1-q_n) = \beta$. The limiting distribution of the points of a random permutation is known as a \textit{permuton} (cf.\,\cite{hoppen}) and has recently been studied in the context of finding the limiting distribution of permutation statistics such as cycle lengths \cite{mukherjee} and certain limit shapes of permutations with fixed pattern densities \cite{kenyon}.

The main result of this paper is a weak law of large numbers of the LCS of two permutations drawn independently from the Mallows measure. The first observation, which is proved in Corollary \ref{C33}, is that the length of LCS of two permutations $\pi$ and $\tau$ is equal to the length of the longest increasing points in the collection of points
\[
\bs{z}(\pi^{-1},\tau^{-1}) \coloneqq \left\{\left( 
 \frac{\pi^{-1}(i)}{n}, \frac{\tau^{-1}(i)}{n}\right) \right\}_{i \in [n]}.
\]
At a high level, our proof follows the approach of Deuschel and Zeitouni \cite{DZ95} who showed weak laws for the LIS of i.i.d.\,points drawn according to some density in a box. They partition the box into a grid of smaller boxes whose size is chosen to be such that the distribution of points within them is close to uniform. The weak law for the LIS of uniformly random permutations \cite{VK} can be applied to points in these boxes to estimate the number of increasing points in the neighborhood of any increasing path. In our case, this approach fails because the points in the box are no longer i.i.d..

Indeed, in a prior work, Mueller and Starr \cite{MuellerStarr} applied Deuschel and Zeitouni's approach to show a weak law for the LIS of a Mallows permutation, where, due to properties of Mallows measure, the permutation induced by the points in a smaller box is also Mallows distributed. They coupled the distribution of points to two i.i.d.\,point processes to overcome this problem.  In our case, this does not seem to be applicable directly, since the induced permutation by the points in a box is no longer Mallows or the product of independent Mallows permutations. We follow a different approach. We prove a combinatorial fact using the properties of the weak Bruhat order, to say that the distribution of the LIS of points in a small box can be stochastically bounded between the LIS and the LDS of a Mallows permutation restricted to a certain fixed set of indices. In their work, Mueller and Starr derived estimates on the LIS of a Mallows permutation in a small box, however we cannot use these estimates directly because of the restriction to an arbitrary set of indices. To overcome this, we generalize their estimates to the LIS of a Mallows permutation restricted to an arbitrary set of indices. Our argument recovers their result for small enough $\beta$ (which is the relevant case) and gives a slightly more streamlined proof.

Specifically, we establish two results to apply the approach above. The first result, deduced from Lemma \ref{L100}, is that the number of points $\bs{z}(\pi^{-1},\tau^{-1})$ contained in any fixed rectangle, when divided by the size of the permutation, converges in probability to a constant. The second result, proved in Lemma \ref{L501}, is that the length of the longest increasing points in $\bs{z}(\pi^{-1},\tau^{-1})$ within a small box $R$ is close to the size of the LIS in the uniform case, i.e., it is approximately $2\sqrt{|\bs{z}(\pi^{-1},\tau^{-1})\cap R|}$. The main theorem is the following.
\begin{theorem}\label{M3}
Let $B^1_{\nearrow}$ denote the set of nondecreasing, $C^1_b$ functions $\phi : [0, 1] \rightarrow [0, 1]$, with $\phi(0) = 0$ and $\phi(1) = 1$, where $C^1_b$ denotes the set of functions which have bounded and continuous first order derivative. Define function $J : B^1_{\nearrow} \rightarrow \mathbb{R}$,
\[
J(\phi) \coloneqq \int_0^1 \sqrt{\dot{\phi}(x) \rho(x, \phi(x))} \, dx, \quad  \text{ and } \quad \bar{J} \coloneqq \sup_{\phi \in B^1_{\nearrow}} J(\phi),
\]
where $\rho(x, y)$ is the density defined in (\ref{eq:L100z}) and $\dot{\phi}$ denotes the derivative of $\phi$. Under the same conditions as in Lemma \ref{L100}, for any $\epsilon > 0$, we have
\begin{equation}
\lim_{n \to \infty} \p_n \left(\left|\,\frac{\lcs(\pi, \tau)}{\sqrt{n}} - 2\bar{J} \, \right| < \epsilon \right) = 1. \label{eq:M3}
\end{equation}

\end{theorem}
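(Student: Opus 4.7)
The plan is to transfer $\lcs(\pi,\tau)$ into a two-dimensional LIS problem for the random point set $\bs{z}(\pi^{-1},\tau^{-1})\subset [0,1]^2$ via Corollary \ref{C33}, and then run the variational argument of Deuschel--Zeitouni. Lemma \ref{L100} supplies convergence of the empirical measure of $\bs{z}(\pi^{-1},\tau^{-1})$ to the density $\rho$, and Lemma \ref{L501} gives the local LIS concentration $\lis(\bs{z}\cap R)\approx 2\sqrt{|\bs{z}\cap R|}$ inside a small box $R$. Combining these two ingredients on a fine grid of $[0,1]^2$ and passing to the continuum produces matching lower and upper bounds of order $2\sqrt{n}\,\bar J$.

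For the lower bound, fix $\phi\in B^1_\nearrow$ with $J(\phi)\ge \bar J-\epsilon$, and partition $[0,1]$ into $N$ equal intervals $I_k=[(k-1)/N,k/N]$. Define the thin boxes $R_k := I_k\times[\phi((k-1)/N),\phi(k/N)]$; because $\phi$ is nondecreasing, the $R_k$ are disjoint and any concatenation of increasing subsequences selected inside successive $R_k$'s is itself an increasing subsequence of $\bs{z}(\pi^{-1},\tau^{-1})$. By Lemma \ref{L100} the count $|\bs{z}\cap R_k|$ concentrates around $n\,\rho(x_k,\phi(x_k))\,\dot\phi(x_k)/N^2$, whence Lemma \ref{L501} yields $\lis(\bs{z}\cap R_k)\approx (2/N)\sqrt{n\,\rho(x_k,\phi(x_k))\,\dot\phi(x_k)}$. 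Summing over $k$ after a union bound (valid provided $N=N(n)$ grows sufficiently slowly) produces a Riemann sum converging to $2\sqrt{n}\,J(\phi)$, so $\lcs(\pi,\tau)\ge (2\bar J-3\epsilon)\sqrt n$ with probability tending to one.

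For the upper bound, tile $[0,1]^2$ into a uniform grid of $N^2$ squares $\{R_{ij}\}$. Every increasing subsequence of $\bs{z}$ is contained in the union of squares indexed by some monotone lattice path, i.e.\ a nondecreasing $\sigma$ on the columns visited, giving
\[
\lcs(\pi,\tau) \;\le\; \max_{\sigma}\; \sum_{i}\lis\bigl(\bs{z}\cap R_{i,\sigma(i)}\bigr).
\]
The number of such paths is at most $\binom{2N}{N}\le 4^N$. A union bound combined with Lemmas \ref{L100} and \ref{L501} gives, uniformly in $(i,j)$, the estimate $\lis(\bs{z}\cap R_{ij})\le (2+o(1))\sqrt{n\,\rho(i/N,j/N)}/N$. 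Interpolating $\sigma$ by a piecewise linear, nondecreasing $\phi_\sigma:[0,1]\to[0,1]$ and mollifying into $B^1_\nearrow$ shows that the sum along any monotone box path is at most $2\sqrt n\,J(\phi_\sigma)+o(\sqrt n)\le 2\sqrt n\,\bar J+o(\sqrt n)$, matching the lower half.

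The principal obstacle is the upper bound. The lower bound involves a single fixed $\phi$ and hence only $O(N)$ cells, but the upper bound demands that Lemmas \ref{L100} and \ref{L501} hold \emph{simultaneously} in all $O(N^2)$ cells, requiring sharp enough tail estimates to survive the union bound. Choosing $N=N(n)$ is the delicate balance: large enough that discrete monotone paths approximate every $\phi\in B^1_\nearrow$ well uniformly (exploiting continuity of $\rho$ and density of piecewise linear functions in $B^1_\nearrow$ for the functional $J$), yet small enough that the cell-wise concentration remains effective and the combinatorial factor $4^N$ is dominated by the probability decay coming from Lemmas \ref{L100} and \ref{L501}.
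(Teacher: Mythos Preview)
Your overall architecture matches the paper's: reduce to $\lis(\bs z(\pi,\tau))$ via Corollary~\ref{C33} and the invariance of the Mallows law under inversion, then run a Deuschel--Zeitouni variational argument with Lemma~\ref{L501} supplying the local $2\sqrt{|\bs z\cap R|}$ estimate. The lower bound you sketch is exactly Lemma~\ref{L54}.

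Where you diverge is in the upper bound, and the ``principal obstacle'' you isolate is in fact an artifact of your setup rather than a real difficulty. The paper does \emph{not} let the grid size grow with $n$. It fixes integers $K,L$ (large, but independent of $n$), so that the number of monotone box-paths $\bs b$ is a fixed constant, at most $(KL)^K$. For each such $\bs b$, Lemma~\ref{L501} applied to the $K$ rectangles gives $\p_n\bigl(\lis_{\bs b}(\bs z)>2\sqrt n(\bar J+\delta)\bigr)\to 0$ as $n\to\infty$; a union bound over finitely many $\bs b$ is then free, and no quantitative tail estimate is needed. The discretization error---the gap between $\max_{\bs b} J_{\bs b}^{K,L}$ and $\bar J$---is handled by a purely deterministic calculation (Lemma~\ref{L51}), which shows this gap is $<\delta/2$ once $K,L$ exceed fixed thresholds. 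So the order of quantifiers is: first choose $K,L$ large enough for the Riemann-sum approximation, then send $n\to\infty$. Your proposed balancing act between $4^{N(n)}$ and probability decay never arises.

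A secondary technical point: with a square $N\times N$ grid, an increasing sequence may occupy several cells in a single column, so the bound $\sum_i \lis(\bs z\cap R_{i,\sigma(i)})$ with one cell per column is not quite right. The paper avoids this by using, in each of the $K$ columns, a single variable-height rectangle $R_i=((i{-}1)\Delta x,\,i\Delta x]\times(b_{i-1}\Delta y,\,(b_i{+}1)\Delta y]$ (Definition~\ref{D52}), where the row indices $b_0\le b_1\le\cdots\le b_K$ are chosen so that $R_i$ covers the full vertical extent of the increasing sequence in column $i$. This guarantees $\lis_{\bs b}(\bs z)\le\sum_{i=1}^K l_{R_i}(\pi,\tau)$ with exactly $K$ terms.
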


Finally, we derive the limiting constant in the special case when $\beta = \gamma$.

\begin{corollary}\label{C92}
Suppose that $\{q_n\}_{n=1}^{\infty}$ and $\{q'_n\}_{n=1}^{\infty}$ are two sequences such that $\lim_{n\to\infty} n (1-q_n) = \lim_{n\to\infty} n (1-q'_n) = \beta$ with $\beta \neq 0$. Then, the constant $\bar J$ in Theorem \ref{M3} is given by
\[
\bar{J} = {\textstyle\sqrt{\frac{\beta}{6 \sinh{(\beta/2)}}}}\cdot\int_0^1 \sqrt{ \cosh{(\beta/2)} + 2 \cosh{\big(\beta[2x - 1]/2\big)}}\, dx.
\]
\end{corollary}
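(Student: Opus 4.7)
The plan is to show that when $\beta=\gamma$ the identity map $\phi(x)=x$ is a maximizer of $J$, and then to evaluate $\rho(x,x)$ explicitly in closed form.

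\medskip

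\textbf{Step 1: Reduce the optimization to the diagonal.} Since $u(\cdot,\cdot,\beta)$ is symmetric in its first two arguments, the case $\beta=\gamma$ lets me rewrite \eqref{eq:L100z} as $\rho(x,y)=\int_{0}^{1} u(x,t,\beta)\,u(y,t,\beta)\,dt$. Cauchy--Schwarz in the $t$-integration then gives the pointwise Gram-matrix bound $\rho(x,y)\le\sqrt{\rho(x,x)\,\rho(y,y)}$. Applying this and then Cauchy--Schwarz once more in the $x$-integral, for every $\phi\in B^{1}_{\nearrow}$,
\[
J(\phi) \le \int_{0}^{1}\sqrt{\dot\phi(x)}\,\rho(x,x)^{1/4}\,\rho(\phi(x),\phi(x))^{1/4}\,dx \le \Bigl(\int_{0}^{1}\sqrt{\rho(x,x)}\,dx\cdot\int_{0}^{1}\dot\phi(x)\sqrt{\rho(\phi(x),\phi(x))}\,dx\Bigr)^{1/2}.
\]
The substitution $y=\phi(x)$ identifies the second factor with $\int_{0}^{1}\sqrt{\rho(y,y)}\,dy$, so $J(\phi)\le\int_{0}^{1}\sqrt{\rho(x,x)}\,dx$. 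Since $\phi=\mathrm{id}\in B^{1}_{\nearrow}$ attains this value, $\bar J=\int_{0}^{1}\sqrt{\rho(x,x)}\,dx$.

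\medskip

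\textbf{Step 2: Evaluate $\rho(x,x)$.} Writing $a\coloneqq\beta/2$ and $\alpha\coloneqq e^{a(x-1/2)}$, expanding the two $\cosh$ terms in \eqref{eq:M1} shows the denominator of $u(x,t,\beta)$ equals
\[
D(t) \coloneqq \alpha\sinh(a(1-t)) + \alpha^{-1}\sinh(at).
\]
Introduce the companion $E(t)\coloneqq\alpha\cosh(a(1-t))-\alpha^{-1}\cosh(at)$. A direct differentiation gives the coupled system $D'(t)=-aE(t)$, $E'(t)=-aD(t)$, and the ``hyperbolic Wronskian''
\[
D(t)^{2}-E(t)^{2} = 4\sinh(ax)\sinh(a(1-x)) \eqqcolon C^{2},
\]
which is constant in $t$. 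Substituting $H=E/D$ then yields $H'(t)=-a(1-H^{2})$ and $1/D^{4}=(1-H^{2})^{2}/C^{4}$, converting
\[
\rho(x,x) = \int_{0}^{1}\frac{a^{2}\sinh^{2}a}{D(t)^{4}}\,dt = \frac{a\sinh^{2}a}{C^{4}}\int_{H(1)}^{H(0)}(1-H^{2})\,dH.
\]
With $\nu\coloneqq a(2x-1)$ the endpoints are $H(0)=(\cosh a - e^{-\nu})/\sinh a$ and $H(1)=(e^{\nu}-\cosh a)/\sinh a$, and the resulting cubic in $H(0),H(1)$ collapses, via $\cosh a-\cosh\nu=2\sinh(ax)\sinh(a(1-x))=C^{2}/2$ and the factorization $\cosh^{2}a+\cosh a\cosh\nu-2\cosh^{2}\nu=(\cosh a-\cosh\nu)(\cosh a+2\cosh\nu)$, to
\[
\rho(x,x) = \frac{\beta\bigl(\cosh(\beta/2)+2\cosh(\beta(2x-1)/2)\bigr)}{6\sinh(\beta/2)}.
\]
Substituting into $\bar J=\int_{0}^{1}\sqrt{\rho(x,x)}\,dx$ produces the stated formula.

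\medskip

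\textbf{Main obstacle.} The reduction in Step 1 is short once one sees the correct pair of Cauchy--Schwarz applications, and uses $\beta=\gamma$ only to ensure symmetry of the inner integrand. The technical heart of the proof is Step 2: the crucial move is recognizing the Wronskian identity $D^{2}-E^{2}=C^{2}$, which transforms the otherwise unwieldy integrand $1/D^{4}$ into a polynomial in $H=E/D$ after the natural change of variables. Without this structure the primitive of $u(x,\cdot,\beta)^{2}$ would not be elementary; with it, the remaining work is bookkeeping with hyperbolic identities to simplify the cubic in $H(0),H(1)$ into the advertised closed form.
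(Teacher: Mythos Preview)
Your proof is correct and takes a genuinely different route from the paper's.

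For Step 1, the paper first proves the pointwise inequality $u(x,t,\beta)u(t,y,\beta)\le u(\tfrac{x+y}{2},t,\beta)u(t,\tfrac{x+y}{2},\beta)$ by an explicit hyperbolic computation (splitting the denominator into pieces $S_t^{+}$ and $S_t^{-}$ and showing $S_t^{+}\ge 0$), deduces $\rho(x,y)\le\rho(\tfrac{x+y}{2},\tfrac{x+y}{2})$, and then invokes a separate reparameterization lemma (Lemma~\ref{L91}) with the substitution $t=(x+\phi(x))/2$ to conclude $\bar J=\int_0^1\sqrt{\rho(x,x)}\,dx$. Your argument bypasses all of this: you observe that when $\beta=\gamma$ the symmetry of $u$ makes $\rho$ a Gram form, so Cauchy--Schwarz gives $\rho(x,y)\le\sqrt{\rho(x,x)\rho(y,y)}$ directly, and a second Cauchy--Schwarz in the $x$-integral finishes the optimization. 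This is shorter and more conceptual; the paper's midpoint inequality is a stronger pointwise statement, but it is not needed for the corollary.

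For Step 2, the paper evaluates $\int_0^1 u(x,t,\beta)^2\,dt$ via the rational substitution $s=e^{\beta(2t-1)/2}$ followed by a second linear substitution, reducing to an elementary antiderivative in the new variable. You instead exploit the ODE structure $D'=-aE$, $E'=-aD$ together with the constant Wronskian $D^2-E^2=C^2$, which turns $1/D^4$ into a polynomial in $H=E/D$ and makes the integral a cubic in the endpoint values; the factorization $\cosh^2 a+\cosh a\cosh\nu-2\cosh^2\nu=(\cosh a-\cosh\nu)(\cosh a+2\cosh\nu)$ then collapses everything. Both computations are of comparable length; yours is more structural, the paper's more hands-on.
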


\section{Reducing LCS problem to LIS problem}

\begin{definition}\label{D3}
Given a set of points in $\mathbb{R}^2$: $\bs{z} = \{z_1, z_2, \ldots, z_n\}$, where $z_i = (x_i, y_i) \in \mathbb{R}^2$, we say that $(z_{i_1}, z_{i_2}, \ldots, z_{i_m})$ is an increasing subsequence if
\[
x_{i_j} < x_{i_{j+1}}, \quad y_{i_j} < y_{i_{j+1}}, \quad j = 1, \ldots, m-1.
\]
Above we do not require $i_j < i_{j+1}$.
Let $\lis(\bs{z})$ denote the length of the longest increasing subsequence of $\bs{z}$.
\end{definition}
\begin{definition}\label{D31}
Given $\bs{a} = (a_1, \ldots, a_n) \in \mathbb{R}^n$, $\bs{b} = (b_1, \ldots, b_n)~\in~\mathbb{R}^n$, we say that $((a_{i_1}, b_{i_1}), (a_{i_2}, b_{i_2}), \ldots, (a_{i_m}, b_{i_m}))$ is an increasing subsequence between $\bs{a}$ and $\bs{b}$ if
\[
a_{i_j} < a_{i_{j+1}}, \quad b_{i_j} < b_{i_{j+1}}, \quad j = 1, \ldots, m-1.
\]
Above we do not require $i_j < i_{j+1}$.
Let $\lis(\bs{a}, \bs{b})$ denote the length of the longest increasing subsequence between $\bs{a}$ and $\bs{b}$.
Let $\lis(\bs{a}) \coloneqq \lis(id, \bs{a})$, $\lds(\bs{a}) \coloneqq \lis(id^r, \bs{a})$, where $id = (1, 2, \ldots, n)$ denotes the identity in $S_n$ and $id^r = (n, \ldots, 1)$ denotes the reversal of identity in $S_n$. Hence $\lis(\bs{a})$ is the length of the longest increasing subsequence of $\bs{a}$ and $\lds(\bs{a})$ is the length of the longest decreasing subsequence of $\bs{a}$.
\end{definition}
Note that Definition \ref{D31} allows us to define $\lis(\pi,\tau)$, the length of the longest increasing subsequence of two permutations, by regarding $\pi$ and $\tau$ as vectors in $\mathbb{Z}^n$.
We show that $\lcs(\pi, \tau) = \lis(\pi^{-1}, \tau^{-1})$, which allows us to reduce the LCS problem to an LIS problem.

\begin{lemma}\label{L32}
Given $\pi, \tau \in S_n$, we have
\[
\lcs(\pi, \tau) = \lcs(\sigma \ccirc \pi, \sigma \ccirc \tau), \qquad \lis(\pi, \tau) = \lis(\pi \ccirc \sigma, \tau \ccirc \sigma),
\]
for any $\sigma \in S_n$.
\end{lemma}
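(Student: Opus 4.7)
Both identities are bookkeeping statements that follow by unwinding the definitions and invoking bijectivity of $\sigma$. The key conceptual point is that the two equalities involve different kinds of composition for a reason: for LCS, $\sigma$ is applied to the \emph{values} (left composition preserves equality of values), while for LIS in the sense of Definition \ref{D31}, $\sigma$ is applied to the \emph{indices} (right composition is just a relabeling of the index set, which is harmless because Definition \ref{D31} places no ordering constraint on $(i_1, \ldots, i_m)$).

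For the first equality, the plan is to argue by inclusion of realizations in both directions. Suppose $\lcs(\pi, \tau) \ge m$ is witnessed by indices $i_1 < \cdots < i_m$ and $j_1 < \cdots < j_m$ with $\pi(i_k) = \tau(j_k)$. Applying $\sigma$ to both sides of each equality gives $(\sigma \ccirc \pi)(i_k) = \sigma(\pi(i_k)) = \sigma(\tau(j_k)) = (\sigma \ccirc \tau)(j_k)$, so the same pair of ordered index sequences witnesses $\lcs(\sigma \ccirc \pi, \sigma \ccirc \tau) \ge m$. Running the same argument with $\sigma$ replaced by $\sigma^{-1}$ and the pair $(\pi, \tau)$ replaced by $(\sigma \ccirc \pi, \sigma \ccirc \tau)$ yields the reverse inequality.

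For the second equality, I would exhibit an explicit bijection between increasing subsequences of $(\pi, \tau)$ and those of $(\pi \ccirc \sigma, \tau \ccirc \sigma)$ via the substitution $i_j \leftrightarrow \sigma(i_j)$. If $(i_1, \ldots, i_m)$ realizes $\lis(\pi \ccirc \sigma, \tau \ccirc \sigma) \ge m$, then the list $(\sigma(i_1), \ldots, \sigma(i_m))$ consists of $m$ distinct indices—because $\sigma$ is a bijection on $[n]$—and by definition satisfies $\pi(\sigma(i_j)) < \pi(\sigma(i_{j+1}))$ and $\tau(\sigma(i_j)) < \tau(\sigma(i_{j+1}))$, so it realizes $\lis(\pi, \tau) \ge m$. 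The reverse direction applies $\sigma^{-1}$ in the same fashion.

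There is no genuine obstacle; the only point worth flagging is the role played by the clause ``we do not require $i_j < i_{j+1}$'' in Definition \ref{D31}. If that clause were absent, right multiplication by $\sigma$ would in general destroy the order of the chosen index sequence and the second identity would fail, so it is worth emphasizing that the permissiveness of Definition \ref{D31} is exactly what makes $\lis(\cdot, \cdot)$ invariant under simultaneous right composition.
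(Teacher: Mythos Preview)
Your proof is correct and follows essentially the same approach as the paper's: both argue one inequality by exhibiting how a witness for one side transforms into a witness for the other (applying $\sigma$ to values for $\lcs$, and reindexing via $\sigma$ for $\lis$), and then obtain the reverse inequality by replacing $\sigma$ with $\sigma^{-1}$. Your explicit remark about why the clause ``we do not require $i_j < i_{j+1}$'' in Definition~\ref{D31} is essential is a nice addition not spelled out in the paper.
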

\begin{proof}
Suppose $(a_1, a_2, \ldots, a_m)$ is a common subsequence of $\pi$ and $\tau$, then $(\sigma(a_1), \sigma(a_2), \ldots, \sigma(a_m))$ is a common subsequence of $\sigma \ccirc \pi$ and $\sigma \ccirc \tau$. Hence,
\[
\lcs(\pi, \tau) \le \lcs(\sigma \ccirc \pi, \sigma \ccirc \tau) \le
\lcs(\sigma^{-1} \ccirc \sigma\ccirc \pi, \sigma^{-1} \ccirc \sigma \ccirc \tau) = \lcs(\pi, \tau).
\]
Similarly, suppose $((\pi(i_1), \tau(i_1)),\ (\pi(i_2), \tau(i_2)),\ \ldots,\ (\pi(i_m), \tau(i_m)))$ is an increasing subsequence between $\pi$ and $\tau$, then
$((\pi\ccirc \sigma(i'_1), \tau\ccirc \sigma(i'_1)),\newline (\pi\ccirc \sigma(i'_2), \tau\ccirc \sigma(i'_2)),\ \ldots,\ (\pi\ccirc \sigma(i'_m), \tau\ccirc \sigma(i'_m)))$ is an increasing subsequence between $\pi \ccirc \sigma$ and $\tau \ccirc \sigma$, where $i'_k =
\sigma^{-1}(i_k)$ for $k \in [m]$. Hence,
\[
\lis(\pi, \tau) \le \lis(\pi \ccirc \sigma, \tau \ccirc \sigma) \le \lis(\pi \ccirc \sigma \ccirc \sigma^{\-1}, \tau \ccirc \sigma \ccirc \sigma^{-1}) = \lis(\pi, \tau). \qedhere
\]
\end{proof}

\begin{corollary}\label{C33}
For any $\pi, \tau \in S_n$, $\lcs(\pi, \tau) = \lis(\pi^{-1}, \tau^{-1})$.
\end{corollary}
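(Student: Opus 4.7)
The plan is to reduce both sides to a common quantity, namely the length of the longest increasing subsequence of the single permutation $\tau^{-1}\ccirc\pi$. The engine is Lemma~\ref{L32}, which allows us to multiply both arguments of $\lcs$ on the left, or both arguments of $\lis$ on the right, by an arbitrary permutation without changing the value.

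First, I would apply the first identity of Lemma~\ref{L32} with $\sigma = \tau^{-1}$ to obtain
\[
\lcs(\pi,\tau) \;=\; \lcs(\tau^{-1}\ccirc\pi,\; \tau^{-1}\ccirc\tau) \;=\; \lcs(\tau^{-1}\ccirc\pi,\; id).
\]
The elementary observation recorded in the Introduction---that for any $\sigma \in S_n$, the LCS of $\sigma$ and $id$ is equal to $\lis(\sigma)$ (a common subsequence with $id$ just picks out increasing values of $\sigma$ at increasing positions)---then yields $\lcs(\pi,\tau) = \lis(\tau^{-1}\ccirc\pi)$.

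Next, I would apply the second identity of Lemma~\ref{L32} with $\sigma = \pi$ to the right-hand side:
\[
\lis(\pi^{-1}, \tau^{-1}) \;=\; \lis(\pi^{-1}\ccirc\pi,\; \tau^{-1}\ccirc\pi) \;=\; \lis(id,\; \tau^{-1}\ccirc\pi),
\]
and by Definition~\ref{D31} the last expression is by definition $\lis(\tau^{-1}\ccirc\pi)$. Equating the two derivations proves the corollary.

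I do not anticipate any real obstacle: the argument is essentially bookkeeping once Lemma~\ref{L32} is available. The only point worth flagging is that Definition~\ref{D31} permits the indices $i_j$ to be non-monotone, which is exactly what makes the right-multiplication symmetry in Lemma~\ref{L32} valid and thus allows $\pi^{-1}$ and $\tau^{-1}$ to be simultaneously normalized to $id$ and $\tau^{-1}\ccirc\pi$.
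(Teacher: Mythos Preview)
Your proof is correct and takes essentially the same approach as the paper's: both reduce each side, via Lemma~\ref{L32} and the trivial fact $\lcs(id,\sigma)=\lis(\sigma)=\lis(id,\sigma)$, to the $\lis$ of a single permutation. The only cosmetic difference is the choice of multipliers---you use $\sigma=\tau^{-1}$ on the left and $\sigma=\pi$ on the right to land on $\lis(\tau^{-1}\ccirc\pi)$, while the paper uses $\sigma=\pi^{-1}$ and $\sigma=\tau^{-1}$ to land on $\lis(id,\pi^{-1}\ccirc\tau)$.
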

\begin{proof}
By the previous lemma, we have
\[
\lcs(\pi, \tau) = \lcs(id, \pi^{-1}\ccirc\tau) = \lis(id, \pi^{-1}\ccirc\tau) = \lis(\tau^{-1}, \pi^{-1})
\]
In the second equality, we use the following trivial fact,
\[
\lcs(id, \pi) = \lis(\pi) = \lis(id, \pi). \qedhere
\]
\end{proof}

\section{Weak Bruhat order}
Before introducing the weak Bruhat order, we make the following definition.

\begin{definition}
Given $\pi \in S_n$ and $\bs{a} = (a_1, a_2, \ldots, a_k)$, where $a_i \in [n]$ and $a_1 < a_2 < \cdots < a_k$, let
$\pi(\bs{a}) = ( \pi(a_1), \pi(a_2), \ldots, \pi(a_k))$. Let $\pi_{\bs{a}} \in S_k$ denote the permutation induced by $\pi(\bs{a})$, i.\,e.\,$\pi_{\bs{a}}(i) = j$ if $\pi(a_i)$ is the $j$-th smallest term in $\pi(\bs{a})$.
\end{definition}

Lemma \ref{L501} says that the LIS of the points $\{(\frac{\pi(i)}{n}, \frac{\tau(i)}{n})\}_{i\in[n]}$ that fall in a small box is close to the uniform case. Let $\bs{a} = (a_1, \ldots, a_k)$ be an increasing sequence of indices with $a_i \in [n]$. To prove Lemma \ref{L501}, we will show that there exists a coupling of permutations $(X,Y, X', X'')$, where $X,X'$ and $X''$ are distributed according to $\m$ and $Y$ is independent of $X$ with an arbitrary distribution on $S_n$. Under this coupling $\lis(X_{\bs{a}}, Y_{\bs{a}})$ will be bounded by $\lis(X'_{\bs{a}})$ and $\lds(X''_{\bs{a}})$. The main tool we use to construct the coupling is the weak Bruhat order on $S_n$. 

Recall that for a permutation $\pi \in S_n$, $l(\pi)$ denotes the number of inversions of $\pi$ and $\IV(\pi)$ denotes the set of inversions of $\pi$ as defined in Definition~\ref{DD}. Let $(i, j)$ denote the transposition in $S_n$ and $s_i \coloneqq (i, i+1)$ the adjacent transposition in $S_n$.

\begin{definition}
The left weak Bruhat order $(S_n, \le_L)$ is defined as the transitive closure of the relations
\[
\pi \le_L \tau \quad \text{if} \quad \tau = s_i \ccirc \pi \ \text{ and }\  \inv(\tau) = \inv(\pi) + 1.
\]
\end{definition}

We are multiplying permutations right-to-left. For instance, $s_2 \ccirc 2413 = 3412$. 
The \emph{right weak Bruhat order} $(S_n, \le_R)$  is defined in the same way except that the covering relationship is given by $\tau = \pi \ccirc s_i$ and $\inv(\tau) = \inv(\pi) + 1$.

One characterization of the left weak order is the following proposition(cf. \cite{abello}). We provide its proof here for the completeness of the paper.

\begin{proposition}\label{abe}
\[
\pi \le_L \tau \quad \text{if and only if} \quad \IV(\pi) \subseteq \IV(\tau).
\]
\end{proposition}
\begin{proof}
Suppose $\tau$ covers $\pi$, i.e., $s_i \ccirc \pi = \tau$ and $\inv(\pi) + 1 = \inv(\tau)$. It is easy to see that $\IV(\tau) = \IV(\pi)\cup\{\big(\pi^{-1}(i), \pi^{-1}(i+1)\big)\}$. For arbitrary $\pi$ and $\tau$, $\pi \le_L \tau$ implies that there exists a sequence of permutations $\{\sigma_0, \ldots, \sigma_k\}$ such that $\sigma_{i+1}$ covers $\sigma_{i}$ and $\pi = \sigma_0 \le_L \cdots \le_L \sigma_k = \tau$. Hence $\pi \le_L \tau$ implies $\IV(\pi) \subseteq \IV(\tau)$. On the other hand, given $\IV(\pi) \subseteq \IV(\tau)$, to show $
\pi \le_L \tau$ it suffices to show that there exists an adjacent transposition $s_i$ such that $\IV(\pi) \subseteq \IV(s_i\ccirc\tau) \subset\IV(\tau)$. Let $k$ be the smallest $i$ such that $\pi^{-1}(i) \neq \tau^{-1}(i)$. Let $j = \pi^{-1}(k)$ and $h = \tau(j)$. Since $h > k \ge 1$, define $j' =  \tau^{-1}(h-1)$. By the choice of $k$, we have $\pi(j') > k$. It follows that $j < j'$, since otherwise $(j', j) \in \IV(\pi)$ and $(j', j) \notin \IV(\tau)$. Therefore we have $\IV(\pi) \subseteq \IV(s_{h-1}\ccirc\tau) \subset\IV(\tau)$.
\end{proof}

\begin{lemma}\label{L34}
Given $\pi, \tau \in S_k$ with $\pi \le_L \tau$, for any $n \ge k$, $0 < q \le 1$ and increasing indices $\bs{a} = ( a_1, a_2, \ldots, a_k )$ with $a_i \in [n]$, there exists a coupling $(X, Y)$ such that $X \sim \m$, $Y \sim \m$ and
\[
\lis(X_{\bs{a}}, \pi) \ge \lis(Y_{\bs{a}}, \tau).
\]
\end{lemma}
\begin{proof}
First, we claim that it suffices to show the case when $\tau$ covers $\pi$ in $(S_k, \le_L)$, that is $\inv(\tau) = \inv(\pi) +1$ and $\tau = s_i \ccirc \pi$ for some $i \in [k-1]$. The claim can be shown by induction on the Kendall's tau distance of $\pi$ and $\tau$, i.e., the minimum number of adjacent transpositions multiplied to $\pi$ from the left to get $\tau$. Suppose we have $\pi \le_L \sigma \le_L \tau$ in $S_k$ with $\inv(\pi) < \inv(\sigma) < \inv(\tau)$. By the induction hypothesis there exist two couplings $(X, Y)$ and $(Y', Z)$, which are not necessarily defined in the same probability space, such that $X, Y, Y', Z$ have the same marginal distribution $\m$ and
\begin{equation}
\lis(X_{\bs{a}}, \pi) \ge \lis(Y_{\bs{a}}, \sigma), \qquad \lis(Y'_{\bs{a}}, \sigma) \ge \lis(Z_{\bs{a}}, \tau). \label{eq:L34a}
\end{equation}
We can construct a new coupling $(X', Z')$ as follows,
\begin{enumerate}
\item[(1)] Sample a permutation $\xi \in S_n$ according to the distribution $\m$.
\item[(2)] Sample $X'$ according to the induced distribution on $S_n$ by the first coupling $(X, Y)$ conditioned on $Y = \xi$.
\item[(3)] Sample $Z'$ according to the induced distribution on $S_n$ by the second coupling $(Y', Z)$ conditioned on $Y' = \xi$.
\end{enumerate}
By the law of total probability, it is easily seen that $X' \sim \m$ and $Z' \sim \m$. Also, regardless of which permutation $\xi$ being sampled in the first step, by (\ref{eq:L34a}), we have
\[
\lis(X'_{\bs{a}}, \pi) \ge \lis(\xi_{\bs{a}}, \sigma) \ge \lis(Z'_{\bs{a}}, \tau). 
\]
In the remainder of the proof, we assume $\tau = s_i \ccirc \pi$ and $\inv(\tau) = \inv(\pi) +1$. Note that, for any $\sigma \in S_n$,
\begin{equation}
\sigma \ccirc (i, j) = (\sigma(i), \sigma(j))\ccirc \sigma, \qquad \sigma_{\bs{a}}\ccirc (i, j) = (\sigma \ccirc (a_i, a_j))_{\bs{a}}. \label{eq:L34b}
\end{equation}
Let $r = a_{\pi^{-1}(i)}$ and $t = a_{\pi^{-1}(i+1)}$. Since $\inv(\tau) = \inv(\pi) +1$, we have $\pi^{-1}(i) < \pi^{-1}(i+1)$, thus, $r < t$. Let $A \coloneqq \{\{\sigma, \sigma \ccirc (r, t)\} : \sigma \in S_n \text{ and } \sigma(r) < \sigma(t) \}$. Clearly, $A$ is a partition of $S_n$. Then we construct the coupling $(X, Y)$ as follows:
\begin{enumerate}
\item[(1)] Choose a set in $A$ according to measure $\m$, i.\,e.\, the set $\{\sigma, \sigma \ccirc (r, t)\}$ is chosen with probability
$\m(\{\sigma, \sigma \ccirc (r, t)\})$.
\item[(2)] Suppose the set $\{\sigma, \sigma \ccirc (r, t)\}$, with $\sigma(r) < \sigma(t)$, is chosen in the first step. Flip a coin with probability of heads being
\[
p = \frac{q^{\inv(\sigma)} - q^{\inv(\sigma \ccirc (r, t))}}{q^{\inv(\sigma)} + q^{\inv(\sigma \ccirc (r, t))}}.
\]
Note that the probability of heads $p$ is nonnegative because we have $0 < q \le 1$ and the following fact:
\[
i < j \text{ and } \sigma(i) < \sigma(j)\ \Rightarrow\ \inv(\sigma) < \inv(\sigma \ccirc (i, j)), \quad \forall \sigma \in S_n.
\]
\item[(3)] If the outcome is head, then we set $X = Y = \sigma$.
\item[(4)] If the outcome is tail, then, with equal probability, we set either $X = \sigma$, $Y = \sigma \ccirc (r, t)$ or $X = \sigma \ccirc (r, t)$, $Y = \sigma$.
\end{enumerate}
It can be verified that $(X, Y)$ thus defined has the correct marginal distribution $\m$. In the following we show that
\begin{equation}
\lis(X_{\bs{a}} \ccirc \pi^{-1}) \ge \lis(Y_{\bs{a}} \ccirc \tau^{-1}). \label{eq:L34c}
\end{equation}
Then, the lemma follows by Lemma \ref{L32} because
\begin{align*}
\lis(X_{\bs{a}} \ccirc \pi^{-1}) &= \lis(X_{\bs{a}} \ccirc \pi^{-1}, id) = \lis(X_{\bs{a}}, \pi),\\
\lis(Y_{\bs{a}} \ccirc \tau^{-1}) &= \lis(Y_{\bs{a}} \ccirc \tau^{-1}, id) = \lis(Y_{\bs{a}}, \tau).
\end{align*}
Suppose the set $\{\sigma, \sigma \ccirc (r, t)\}$, with $\sigma(r) < \sigma(t)$, is chosen in the first step. If the outcome in the second step is tail, we verify that $X_{\bs{a}} \ccirc \pi^{-1} = Y_{\bs{a}} \ccirc \tau^{-1}$. When $X = \sigma$, $Y = \sigma \ccirc (r, t)$, by
(\ref{eq:L34b}), we have
\begin{align*}
X_{\bs{a}} \ccirc \pi^{-1} &= \sigma_{\bs{a}} \ccirc \pi^{-1},\\
Y_{\bs{a}} \ccirc \tau^{-1} &= (\sigma \ccirc (r, t))_{\bs{a}} \ccirc \pi^{-1} \ccirc s_i\\
                            &= (\sigma \ccirc (r, t))_{\bs{a}} \ccirc (\pi^{-1}(i), \pi^{-1}(i+1)) \ccirc \pi^{-1}\\
                            &= (\sigma \ccirc (r, t) \ccirc (r, t))_{\bs{a}} \ccirc \pi^{-1}\\
                            &= \sigma_{\bs{a}} \ccirc \pi^{-1}.
\end{align*}
When $X = \sigma \ccirc (r, t)$, $Y = \sigma$, again by (\ref{eq:L34b}), we have
\begin{align*}
X_{\bs{a}} \ccirc \pi^{-1} &= (\sigma \ccirc (r, t))_{\bs{a}} \ccirc \pi^{-1}\\
                           &= \sigma_{\bs{a}} \ccirc (\pi^{-1}(i), \pi^{-1}(i+1)) \ccirc \pi^{-1}\\
                           &= \sigma_{\bs{a}} \ccirc \pi^{-1} \ccirc s_i,\\
Y_{\bs{a}} \ccirc \tau^{-1} &= \sigma_{\bs{a}} \ccirc \pi^{-1} \ccirc s_i.
\end{align*}
If the outcome in the second step is head, we have 
\[
X_{\bs{a}} \ccirc \pi^{-1} = \sigma_{\bs{a}} \ccirc \pi^{-1} \quad\text{ and }\quad
Y_{\bs{a}} \ccirc \tau^{-1} = \sigma_{\bs{a}} \ccirc \pi^{-1} \ccirc s_i.
\]
Since $\sigma(r) < \sigma(t)$, i.\,e.\,, $\sigma(a_{\pi^{-1}(i)}) < \sigma(a_{\pi^{-1}(i+1)})$, we have $\sigma_{\bs{a}} \ccirc \pi^{-1}(i) < \sigma_{\bs{a}} \ccirc \pi^{-1}(i+1)$. Hence $Y_{\bs{a}} \ccirc \tau^{-1}$ covers $X_{\bs{a}} \ccirc \pi^{-1}$ in $(S_k, \le_R)$. (\ref{eq:L34c}) follows.
\end{proof}

\begin{remark*}
A special case of Lemma \ref{L34} is when $k = n$, in which the only choice for $\bs{a}$ is the vector $(1, 2, \ldots, n)$ whence $X_{\bs{a}} = X$, $Y_{\bs{a}} = Y$.
\end{remark*}

In Lemma \ref{L341}, we prove a similar result for the case when $q \ge 1$, using the following property of Mallows permutations (cf.\,Lemma 2.2 in \cite{Naya}).
\begin{proposition}\label{P3}
For any $n \ge 1$ and $q > 0$, if $\pi \sim \m$ then $\pi^r \sim \mu_{n, 1/q}$ and $\pi^{-1} \sim \m$.
\end{proposition}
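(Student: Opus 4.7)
The plan is to derive both distributional identities from two elementary bijections on inversion sets, combined with a one-line change of variables in the probability mass function. Once the right bijections are identified, the rest is bookkeeping.

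First I would establish two inversion counts: $\inv(\pi^r) = \binom{n}{2} - \inv(\pi)$ and $\inv(\pi^{-1}) = \inv(\pi)$. Interpreting $\pi^r$ as the sequence reversal $\pi^r(i) = \pi(n+1-i)$ (consistent with the paper's $id^r = (n, \ldots, 2, 1)$), the map $(i,j) \mapsto (n+1-j,\, n+1-i)$ is an involution on $\{(i,j) : 1 \le i < j \le n\}$ carrying $\IV(\pi^r)$ bijectively onto the non-inversions of $\pi$; hence $\inv(\pi^r) = \binom{n}{2} - \inv(\pi)$. For the inverse permutation, the map $(i,j) \mapsto (\pi^{-1}(j),\, \pi^{-1}(i))$ is a bijection $\IV(\pi^{-1}) \to \IV(\pi)$: the conditions $i<j$ and $\pi^{-1}(i) > \pi^{-1}(j)$ are equivalent to setting $a := \pi^{-1}(j) < \pi^{-1}(i) =: b$ and observing $\pi(a) = j > i = \pi(b)$.

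With these counts in hand, I would conclude by a direct computation of the push-forward measure. For any $\sigma \in S_n$,
\[
\p(\pi^r = \sigma) \;=\; \m(\sigma^r) \;=\; \frac{q^{\binom{n}{2} - \inv(\sigma)}}{Z_{n, q}} \;\propto\; (1/q)^{\inv(\sigma)},
\]
so $\pi^r \sim \mu_{n, 1/q}$, the scalar $q^{\binom{n}{2}} / Z_{n,q}$ being absorbed into the normalization $1/Z_{n, 1/q}$ (a short computation shows $Z_{n, 1/q} = q^{-\binom{n}{2}} Z_{n,q}$). Likewise,
\[
\p(\pi^{-1} = \sigma) \;=\; \m(\sigma^{-1}) \;=\; \frac{q^{\inv(\sigma)}}{Z_{n,q}} \;=\; \m(\sigma).
\]

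I do not anticipate any real obstacle: the proposition records a well-known structural symmetry of the Mallows measure, and its entire content lies in the two inversion-count identities above. The only point requiring care is fixing the reversal convention as sequence-reversal $\pi \ccirc id^r$ rather than $id^r \ccirc \pi$, since this is what forces the sign in the first identity; the paper's notation $id^r = (n, \ldots, 2, 1)$ pins down this choice unambiguously.
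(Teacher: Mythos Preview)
Your argument is correct and is the standard proof of this well-known symmetry. Note, however, that the paper does not actually supply a proof of Proposition~\ref{P3}: it is stated with a citation (``cf.\ Lemma~2.2 in \cite{Naya}'') and used as a black box. Your direct computation via the two inversion-count identities is exactly the expected argument, and your reversal convention $\pi^r(i)=\pi(n+1-i)$ matches the one the paper uses elsewhere (e.g.\ in the proof of Lemma~\ref{L341}, where the identity $\IV(\pi^r)=\{(i,j):(n+1-j,n+1-i)\notin\IV(\pi)\}$ is invoked). Nothing is missing.
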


\begin{lemma}\label{L341}
Given $\pi, \tau \in S_k$ with $\pi \le_L \tau$, for any $n \ge k$, $q \ge 1$ and increasing indices $\bs{a} = ( a_1, a_2, \ldots, a_k )$ with $a_i \in [n]$, there exists a coupling $(X, Y)$ such that $X \sim \m$, $Y \sim \m$ and
\[
\lis(X_{\bs{a}}, \pi) \le \lis(Y_{\bs{a}}, \tau).
\]
\end{lemma}
\begin{proof}
Given $\pi \in S_n$, recall that $\pi^r$ denote the reversal of $\pi$. For any $\pi \in S_n$, we have $\IV(\pi^r) =
\{(i, j): 1\le i < j \le n \text{ and } (n+1-j, n+1-i) \notin \IV(\pi)\}$. Hence, $\pi \le_L \tau$ implies $\tau^r \le_L \pi^r$. By Lemma \ref{L34}, there exists a coupling $(U, V)$ such that $U \sim \mu_{n, 1/q}$, $V \sim \mu_{n, 1/q}$ and
\[
\lis(U_{\bs{a}'}, \pi^r) \le \lis(V_{\bs{a}'}, \tau^r),
\]
where $\bs{a}' = (a'_1, \ldots, a'_k)$ with $a'_i = n + 1 - a_{k+1-i}$.
Define $(X, Y) \coloneqq (U^r, V^r)$. By Proposition \ref{P3}, $X \sim \m$, $Y \sim \m$. Moreover, we have
\begin{align*}
\lis(X_{\bs{a}}, \pi) &= \lis((X_{\bs{a}})^r, \pi^r) = \lis((X^r)_{\bs{a}'}, \pi^r) = \lis(U_{\bs{a}'}, \pi^r)\\
                      &\le \lis(V_{\bs{a}'}, \tau^r) = \lis((Y^r)_{\bs{a}'}, \tau^r) = \lis((Y_{\bs{a}})^r, \tau^r)\\
                      &= \lis(Y_{\bs{a}}, \tau). \qedhere
\end{align*}
\end{proof}


\begin{lemma}\label{L360}
Given increasing indices $\bs{a} = ( a_1, a_2, \ldots, a_k )$ with $a_i \in[n]$, for any $0 < q \le 1$ and any distribution $\nu$ on $S_k$, there exists a coupling $(X, Y, Z)$ such that the following holds.
\begin{itemize}
\item[(a)] $X$ and $Y$ are independent.
\item[(b)] $X \sim \m$, $Y \sim \nu$ and $Z \sim \m$.
\item[(c)] $\lis(X_{\bs{a}}, Y) \le \lis(Z_{\bs{a}})$.
\end{itemize}
\end{lemma}

\begin{proof}
Let $id_{k}$ denote the identity in $S_k$. By the definition of weak Bruhat order, for any $\xi \in S_k$, we have $id_{k} \le_{L} \xi$. Hence, given $\xi \in S_k$, by Lemma \ref{L34}, there exists a coupling $(U, V)$ such that $U \sim \m$, $V \sim \m$ and
$\lis(U_{\bs{a}}, \xi) \le \lis(V_{\bs{a}}, id_{k}) = \lis(V_{\bs{a}})$. Then we construct the coupling $(X, Y, Z)$ as follows.
\begin{itemize}
\item Sample $Y$ according to the distribution $\nu$.
\item Conditioned on $Y = \xi$, $(X, Z)$ has the same distribution as $(U, V)$ defined above.
\end{itemize}
First, we point out that $X$ and $Y$ are independent. Since whatever value $Y$ takes, the conditional distribution of $X$ is $\m$. Moreover, it can be seen that $X$, $Y$ and $Z$ thus defined have the correct marginal distributions. Finally, (c) holds by the construction of the coupling.
\end{proof}

We can prove a similar result for the case when $q \ge 1$.

\begin{lemma}\label{L361}
Given $\bs{a} = (a_1, a_2, \ldots, a_k)$, where $a_1 < \cdots < a_k$ and $a_i \in[n]$, for any $q \ge 1$ and any distribution $\nu$ on $S_k$, there exists a coupling $(X, Y, Z)$ such that the following holds.
\begin{itemize}
\item[(a)] $X$ and $Y$ are independent.
\item[(b)] $X \sim \m$, $Y \sim \nu$ and $Z \sim \m$.
\item[(c)] $\lis(X_{\bs{a}}, Y) \ge \lis(Z_{\bs{a}})$.
\end{itemize}
\end{lemma}

\begin{proof}
The lemma follows by the same argument as in the proof of Lemma~\ref{L360} except that here we use Lemma \ref{L341} instead of Lemma \ref{L34}.
\end{proof}

\begin{lemma}\label{L370}
Given $\bs{a} = (a_1, a_2, \ldots, a_k)$, where $a_1 < \cdots < a_k$ and $a_i \in[n]$. Define $\bar{\bs{a}} \coloneqq (n+1 - a_k, n+1 - a_{k-1}, \ldots, n+1 - a_1)$. For any $0 < q \le 1$ and any distribution $\nu$ on $S_k$, there exists a coupling $(X, Y, Z)$ such that the following holds.
\begin{itemize}
\item[(a)] $X$ and $Y$ are independent.
\item[(b)] $X \sim \m$, $Y \sim \nu$ and $Z \sim \mu_{n, 1/q}$.
\item[(c)] $\lis(X_{\bs{a}}, Y) \ge \lis(Z_{\bar{\bs{a}}})$.
\end{itemize}
\end{lemma}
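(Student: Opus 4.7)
The plan is to reduce Lemma~\ref{L370} to the already-proved Lemma~\ref{L361} via the reversal operation $\pi \mapsto \pi^r$. Since $0 < q \le 1$, the parameter $1/q$ lies in the regime $\ge 1$ covered by Lemma~\ref{L361}, and Proposition~\ref{P3} tells us that reversal sends an $\m$-distributed permutation to a $\mu_{n, 1/q}$-distributed one, and conversely. Concretely, I would first let $\bar\nu$ denote the pushforward of $\nu$ under reversal on $S_k$. Then I would apply Lemma~\ref{L361} with parameter $1/q$, index vector $\bar{\bs{a}}$, and distribution $\bar\nu$, obtaining a coupling $(X', Y', Z')$ with $X', Z' \sim \mu_{n, 1/q}$, $Y' \sim \bar\nu$, $X'$ independent of $Y'$, and
\begin{equation*}
\lis(X'_{\bar{\bs{a}}}, Y') \ge \lis(Z'_{\bar{\bs{a}}}).
\end{equation*}
I would then define the desired coupling by $X \coloneqq (X')^r$, $Y \coloneqq (Y')^r$, $Z \coloneqq Z'$. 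Properties (a) and (b) of the target lemma follow at once from Proposition~\ref{P3} and from the preservation of independence under measurable maps.

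To verify property (c), I would establish two elementary identities that bridge the two sides. The first is that $(\sigma^r)_{\bar{\bs{a}}} = (\sigma_{\bs{a}})^r$ for every $\sigma \in S_n$, since both sides are the permutation in $S_k$ induced by the reversed list $\sigma(a_k), \sigma(a_{k-1}), \ldots, \sigma(a_1)$. The second is that $\lis(\pi, \tau) = \lis(\pi^r, \tau^r)$ for all $\pi, \tau \in S_k$, which holds because the two pairs encode the same unordered point set $\{(\pi(i), \tau(i))\}_{i \in [k]}$ in $\mathbb{R}^2$ and, by Definition~\ref{D3}, the length of the longest increasing subsequence depends only on that point set. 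Chaining these identities with the inequality above yields
\begin{equation*}
\lis(X_{\bs{a}}, Y) = \lis\bigl((X_{\bs{a}})^r, Y^r\bigr) = \lis(X'_{\bar{\bs{a}}}, Y') \ge \lis(Z'_{\bar{\bs{a}}}) = \lis(Z_{\bar{\bs{a}}}),
\end{equation*}
which is exactly (c).

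The main obstacle is not substantive but notational: one must carefully disentangle \emph{index} reversal (which transports $\bs{a}$ to $\bar{\bs{a}}$) from \emph{value} reversal (which toggles $\m$ and $\mu_{n, 1/q}$) and verify the two identities above without off-by-one or indexing errors. Once these identities are in place, Lemma~\ref{L370} reduces to a transparent appeal to Lemma~\ref{L361}, and the proof essentially mirrors, via reversal, the way Lemma~\ref{L341} was obtained from Lemma~\ref{L34}.
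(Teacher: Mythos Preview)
Your proposal is correct and follows essentially the same route as the paper's proof: both apply Lemma~\ref{L361} with parameter $1/q$, index vector $\bar{\bs a}$, and the reversed distribution $\nu^r$, then define the desired triple by reversing the first two coordinates, and both rely on the identities $(\sigma^r)_{\bar{\bs a}} = (\sigma_{\bs a})^r$ and $\lis(\pi,\tau)=\lis(\pi^r,\tau^r)$ to transfer the inequality. The paper just writes the verification chain in terms of the point sets $\{(U_{\bar{\bs a}}(i),V(i))\}_{i\in[k]}$ rather than isolating the two identities, but the content is the same.
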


\begin{proof}
Recall that $\pi^r$ denotes the reversal of $\pi$. If $\pi \sim \nu$, we use $\nu^r$ to denote the distribution of $\pi^r$. Clearly, $\nu = (\nu^r)^r$. By Lemma \ref{L361}, there exists a coupling $(U, V, Z)$ such that
\begin{itemize}
\item $U$ and $V$ are independent.
\item $U \sim \mu_{n, 1/q}$, $V \sim \nu^r$ and $Z \sim \mu_{n, 1/q}$.
\item $\lis(U_{\bar{\bs{a}}}, V) \ge \lis(Z_{\bar{\bs{a}}})$.
\end{itemize}
Define $X \coloneqq U^r$ and $Y \coloneqq V^r$. We have
\begin{align*}
\lis\big(U_{\bar{\bs{a}}}, V\big) &= \lis\Big(\big\{\big(U_{\bar{\bs{a}}}(i), V(i)\big)\big\}_{i\in [k]}\Big)\\
                                &= \lis\Big(\big\{\big((U_{\bar{\bs{a}}})^r(i), V^r(i)\big)\big\}_{i\in [k]}\Big)\\
                                &= \lis\Big(\big\{\big((U^r)_{\bs{a}}(i), V^r(i)\big)\big\}_{i\in [k]}\Big)\\
                                &= \lis\Big(\big\{\big(X_{\bs{a}}(i), Y(i)\big)\big\}_{i\in [k]}\Big)\\
                                &= \lis(X_{\bs{a}}, Y). \qedhere
\end{align*}
\end{proof}

\section{Proof of Theorem \ref{M3}}

We start this section by introducing the following lemma which is analogous to Corollary 4.3 in \cite{MuellerStarr}. That result shows that the $\lis$ of a Mallows distributed permutation scaled by $n^{-1/2}$ can be bounded within the interval $(2 e^{-|\beta|/2}, 2 e^{|\beta|/2})$. We postpone the proof of Lemma \ref{L40} to the end of this paper. 

\begin{definition}
For any positive integer $n$ and $m \in [n]$, define
\[
Q(n, m) \coloneqq \{ (b_1, b_2, \ldots, b_m) : b_i \in [n] \text{ and } b_i < b_{i+1} \text{ for all } i\}. 
\]
\end{definition}

\begin{lemma}\label{L40}
Suppose that $\{q_n\}_{n=1}^{\infty}$ is a sequence such that $q_n > 0$ and $\lim_{n \to \infty} n(1-q_n)= \beta$ with $|\beta| < \ln{2}$. For any sequence $\{k_n\}_{n = 1}^{\infty}$ such that $k_n \in [n]$ and $\lim_{n \to \infty} k_n = \infty$, we have
\[
\lim_{n \to \infty} \max_{\bs{b} \in Q(n, k_n)} \n\left(\pi \in S_n : \frac{\lis(\pi_{\bs{b}})}{\sqrt{k_n}} \notin (2 e^{\frac{-|\beta|}{2}} - \epsilon, 2 e^{\frac{|\beta|}{2}}+\epsilon)\right) = 0
\]
for any $\epsilon > 0$.
\end{lemma}

\subsection{The scale of $\lis(\pi, \tau)$ within a rectangle}

We introduce the following way to sample a permutation according to $\m$ which will be used in the proofs.
Given $\bs{c} = (c_1, c_2, \ldots, c_m)$, where $c_i \in \mathbb{Z}^+$ and $\sum_{i=1}^{m} c_i = n$, define
\begin{align*}
\quad &d_0 \coloneqq 0, \quad d_k \coloneqq \sum_{i = 1}^{k} c_i \quad \forall k \in [m],\\
\quad &A(\bs{c}) \coloneqq \{(A_1, A_2, \ldots, A_m) : \{A_i\}_{i \in [m]} \text{ is a partition of } [n],\ |A_i| = c_i\}.
\end{align*}
Given $(A_1, \ldots, A_m) \in A(\bs{c})$, define the inversion number of $(A_1, \ldots, A_m)$ as follows,
\begin{align*}
&\inv((A_1, \ldots, A_m)) \coloneqq \\
&\qquad\quad \left|\{(x, y) : x < y \text{ and there exists $i>j$ such that }x \in A_i,\ y \in A_j \}\right|.
\end{align*}
Let $\bs{a}_i$ be the vector which consists of the numbers in $A_i$ in increasing order. There exists a bijection $f_{\bs{c}}$ between $S_n$ and
$A(\bs{c}) \times S_{c_1} \times S_{c_2} \times \cdots \times S_{c_m}$ such that, for any $\pi \in S_n$, $f_{\bs{c}}(\pi) =
((A_1, A_2, \ldots, A_m), \tau_1, \tau_2, \ldots, \tau_m)$ if and only if
\[
\{\pi(j): j \in A_i\} = \{d_{i-1} +1, d_{i-1} +2, \ldots, d_i\}, \quad \pi_{\bs{a}_i} = \tau_i, \quad \forall i \in [m].
\]
In other words, set $A_i$ consists of those indices $j$ such that $\pi(j) \in [d_{i-1} + 1, d_i]$ and $\tau_i$ denotes the relative ordering of $\{d_{i-1} +1, \ldots, d_i\}$ in $\pi$. For example, given $(A_1, A_2, A_3) = (\{1,5,6\}, \{2,4,9\}, \{3,7,8\})$ and $\tau_1 = (1, 3, 2)$, $\tau_2 = (2, 3, 1)$, $\tau_3 = (3, 2, 1)$, the corresponding permutation $\pi$ under the bijection $f_{\bs{c}}$ is $(1, 5, 9, 6, 3, 2, 8, 7, 4)$.
From the definition above, it is not hard to see that the following relation holds,
\begin{equation}
\inv(\pi) = \inv((A_1, A_2, \ldots, A_m)) + \sum_{i = 1}^m \inv(\tau_i). \label{eq:zz}
\end{equation}
Define the random variable $X_{\bs{c}}$ which takes value in $A(\bs{c})$ such that
\[
\p(X_{\bs{c}} = (A_1, A_2, \ldots, A_m))\ \propto \ q^{\inv((A_1, A_2, \ldots, A_m))}.
\]
Independent of $X_{\bs{c}}$, let $Y_1, Y_2, \ldots, Y_m$ be independent random variables such that, for any $i \in [m]$, $Y_i \sim \mu_{c_i, q}$. Define $Z \coloneqq f_{\bs{c}}^{-1}(X_{\bs{c}}, Y_1, Y_2, \ldots, Y_m)$. By (\ref{eq:zz}), we have $Z \sim \m$, since
\[
\p(Z = \pi) \ \propto \ q^{\inv(\pi)}.
\]

As our last step in preparation for the proof of Lemma \ref{L501},  we introduce the following elementary result in analysis.
\begin{lemma}\label{L38}
Suppose $\{B_i\}_{i = 1}^{\infty}$ is a partition of $\mathbb{N}$, i.e.\,$\cup_{i = 1}^{\infty}B_i = \mathbb{N}$ and $B_i \cap B_j = \varnothing, \ \forall i \neq j$. Moreover, each $B_i$ is a finite nonempty set. Given a sequence $\{x_i\}_{i = 1}^{\infty}$, if $\lim_{n \to \infty}x_{b_n} = a$, for any sequence $\{b_i\}_{i = 1}^{\infty}$ with $b_i \in B_i$, then we have $\lim_{n \to \infty}x_n = a$.
\end{lemma}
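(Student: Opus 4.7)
The plan is to argue by contradiction: assume $x_n \not\to a$, extract a ``bad'' subsequence bounded away from $a$, and splice it into a sequence $b_i \in B_i$ to which the hypothesis applies, producing a contradiction. The key structural fact I will exploit is that since each $B_i$ is finite, the finite union $\bigcup_{i \le N} B_i$ is finite for every $N$, so any subsequence of $\mathbb{N}$ tending to infinity must escape the first $N$ blocks eventually.

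Concretely, first I suppose $x_n \not\to a$. Then there exist $\epsilon > 0$ and a strictly increasing sequence of indices $n_1 < n_2 < \cdots$ with $|x_{n_k} - a| \ge \epsilon$ for all $k$. For each $k$, let $i_k$ be the unique index with $n_k \in B_{i_k}$, which exists since $\{B_i\}$ partitions $\mathbb{N}$. I claim $i_k \to \infty$: for any fixed $N$, the set $\bigcup_{i \le N} B_i$ is finite, so it contains only finitely many of the $n_k$; hence $i_k > N$ for all large $k$. Passing to a subsequence, I may assume the $i_k$ are strictly increasing.

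Next I construct the offending sequence $\{b_i\}_{i=1}^\infty$ witnessing the hypothesis. For each $i \in \{i_k : k \ge 1\}$, set $b_i := n_k$ where $k$ is the unique index with $i_k = i$; note $b_i \in B_i$ by construction. For every remaining $i$, pick $b_i$ to be an arbitrary element of $B_i$ (which is nonempty by assumption). Then $\{b_i\}_{i=1}^\infty$ satisfies $b_i \in B_i$ for every $i$, so by hypothesis $x_{b_i} \to a$ as $i \to \infty$. In particular, the subsequence $x_{b_{i_k}} = x_{n_k}$ must converge to $a$, contradicting $|x_{n_k} - a| \ge \epsilon$.

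This is essentially a bookkeeping argument and there is no real obstacle; the only care needed is to note that $i_k \to \infty$ (so that passing to a strictly increasing subsequence is possible and the constructed $\{b_i\}$ is well defined on distinct indices) and that the nonemptiness of each $B_i$ lets me complete $\{b_i\}$ at the indices not of the form $i_k$.
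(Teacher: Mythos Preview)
Your proof is correct and follows essentially the same approach as the paper's: argue by contradiction, extract a subsequence $\{n_k\}$ bounded away from $a$, thin it so that distinct $n_k$ lie in distinct blocks $B_i$ (using finiteness of each $B_i$), then define $b_i$ to be $n_k$ on the hit blocks and arbitrary elsewhere to contradict the hypothesis. Your justification that $i_k\to\infty$ via finiteness of $\bigcup_{i\le N}B_i$ is slightly more explicit than the paper's ``without loss of generality'' step, but the arguments are otherwise the same.
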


\begin{proof}
We prove the lemma by contradiction. Suppose $\lim_{n \to \infty}x_n = a$ does not hold. Then there exists $\epsilon > 0$ and  a subsequence $\{x_{n_j}\}_{j = 1}^{\infty}$ such that $x_{n_j} \notin (a - \epsilon, a + \epsilon)$ for all $j$. We can construct a sequence $\{b_i\}_{i = 1}^{\infty}$ with $b_i \in B_i$, such that $x_{b_i} \notin (a - \epsilon, a + \epsilon)$ infinitely often. Specifically, we define the sequence $\{b_i\}_{i = 1}^{\infty}$ as follows. For each $i$, if there exists an $n_j \in B_i$, let $b_i = n_j$, otherwise, let $b_i$ be the smallest number in $B_i$. Thus, we get the contradiction.
\end{proof}


For any $\pi, \tau \in S_n$, define $\bs{z}(\pi, \tau) \coloneqq \{(\frac{\pi(i)}{n}, \frac{\tau(i)}{n})\}_{i \in [n]}$. Given a rectangle $R \subset [0, 1]\times[0, 1]$, 
let $l_R(\pi, \tau)$ denote the length of the longest increasing subsequence of $\bs{z}(\pi, \tau)$ within $R$. The following lemma addresses the size of the LIS in in a small rectangle and this result will be the most crucial building block in the proof of Theorem \ref{M3}.

\begin{lemma}\label{L501}
Let $R = (x_1, x_2]\times(y_1, y_2] \subset [0, 1]\times[0, 1]$. Under the same conditions as in Lemma \ref{L100}, if $\Delta x |\beta| < \ln{2}$ , we have
\begin{equation}
\lim_{n \to \infty}\p_n\left(\frac{l_R(\pi, \tau)}{\sqrt{n\rho(R)}} \in \Big(2e^{-\Delta x |\beta| /2 } -\epsilon,\ 2 e^{\Delta x |\beta|/2} + \epsilon\Big)\right) = 1, \label{eq:L501a}
\end{equation}
for any $\epsilon > 0$, where
$\rho(R) \coloneqq \iint_R \rho(x, y)\,dxdy$ and $\Delta x \coloneqq x_2 - x_1$.
\end{lemma}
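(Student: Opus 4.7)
The plan is to sandwich $l_R(\pi, \tau)$ between the $\lis$ of two single Mallows permutations on roughly $n\Delta x$ elements, to which Lemmas \ref{L41} and \ref{L42} then apply with effective parameter $\Delta x \cdot \beta$. Because the claim depends only on $|\beta|$ and the reversal $(\pi, \tau) \mapsto (\pi^r, \tau^r)$ preserves the point set $\bs{z}(\pi, \tau)$ (hence $l_R$) while sending the Mallows parameter $q$ to $1/q$ by Proposition \ref{P3}, I first reduce to the case $0 < q_n \le 1$, that is, $\beta \ge 0$. Next apply the Mallows sampling procedure described just before Lemma \ref{L38} with block sizes $\bs{c} = (l_1, l_2, l_3)$, where $l_1 = \lfloor nx_1\rfloor$, $l_2 = \lceil nx_2 \rceil - l_1$, $l_3 = n - l_1 - l_2$. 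Writing $\bs{a}$ for the middle block $A_2$ sorted in increasing order, this gives $\pi(a_j) = l_1 + \pi_2(j)$, where $\pi_2 \sim \mu_{l_2, q_n}$ is independent of the partition and of $\tau$, and $l_2(1-q_n) \to \Delta x \cdot \beta < \ln 2$. Setting $S \coloneqq \{j \in [l_2] : \tau(a_j) \in (\lfloor ny_1\rfloor, \lceil ny_2\rceil]\}$, $\bs{s}$ equal to $S$ in increasing order, and $\tau' \in S_{|S|}$ the permutation induced by the sequence $(\tau(a_j))_{j \in \bs{s}}$, unpacking Definition \ref{D31} delivers the central identity
\[
l_R(\pi, \tau) \;=\; \lis\!\bigl((\pi_2)_{\bs{s}}, \tau'\bigr).
\]
Crucially, $(\bs{s}, \tau')$ is measurable with respect to $(\bs{a}, \tau)$ and hence is conditionally independent of $\pi_2$.

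For the upper bound, conditional on $(\bs{a}, \tau)$ apply Lemma \ref{L360} with $n\mapsto l_2$, $q\mapsto q_n$, $\bs{a}\mapsto \bs{s}$, and $\nu$ the point mass at $\tau'$; this produces a coupled $Z \sim \mu_{l_2, q_n}$ with $\lis((\pi_2)_{\bs{s}}, \tau') \le \lis(Z_{\bs{s}})$. Combining Lemma \ref{L42} applied to $\mu_{l_2, q_n}$ (whose effective parameter is $\Delta x \cdot \beta < \ln 2$) with the concentration $|S|/n \to \rho(R)$ supplied by Lemma \ref{L100} (so in particular $|S|\to\infty$), and invoking Lemma \ref{L38} to promote the single-sequence concentration in Lemma \ref{L42} to one accommodating the random size $|S|$, yields $l_R(\pi, \tau)/\sqrt{n\rho(R)} \le 2 e^{\Delta x \beta/2} + \epsilon$ with probability tending to one. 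For the lower bound, apply Lemma \ref{L370} analogously to produce a coupled $Z' \sim \mu_{l_2, 1/q_n}$ with $\lis((\pi_2)_{\bs{s}}, \tau') \ge \lis(Z'_{\bar{\bs{s}}})$, where $\bar{\bs{s}}$ is the reversed sequence; since $l_2(1-1/q_n) \to -\Delta x \cdot \beta \le 0$, Lemma \ref{L41} applied to $\mu_{l_2, 1/q_n}$ together with the same concentration argument gives $l_R(\pi, \tau)/\sqrt{n\rho(R)} \ge 2 e^{-\Delta x \beta/2} - \epsilon$ with probability tending to one.

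The main obstacle I anticipate is the bookkeeping around the conditional couplings: Lemmas \ref{L360} and \ref{L370} are stated for a deterministic index sequence $\bs{a}$, so they must be applied conditionally on $(\bs{a}, \tau)$ and then integrated; simultaneously, the uniform-over-deterministic-$\bs{b}$ conclusions of Lemmas \ref{L41} and \ref{L42} (stated for a single sequence $k_n \to \infty$) must be extended to the random size $|S|$ that is only concentrated near $n\rho(R)$. Lemma \ref{L38} is designed for exactly this extension, but invoking it correctly for both the upper and lower bounds is the delicate part of the argument.
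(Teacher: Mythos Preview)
The proposal is correct and follows essentially the same route as the paper: reduce via reversal to $q_n \le 1$, block-decompose $\pi$ to isolate an inner Mallows permutation $\pi_2 \sim \mu_{l_2, q_n}$ with $l_2(1-q_n) \to \Delta x\,\beta$, express $l_R$ as $\lis((\pi_2)_{\bs s}, \tau')$, and sandwich via the couplings of Lemmas \ref{L360}/\ref{L370} together with Lemmas \ref{L42}/\ref{L41} and the concentration of $|S|$ from Lemma \ref{L100}. One small clarification: in the paper Lemma \ref{L38} is used to re-index along subsequences on which the inner block size $l_2$ is fixed (so that Lemmas \ref{L41}--\ref{L42} apply verbatim to $\mu_{n,q_{s_n}}$), not to handle the randomness of $|S|$; the latter is absorbed by Lemma \ref{L100} together with the monotonicity of $\lis(\eta_{\bs b})$ in $\bs b$.
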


\begin{proof}
To simplify the proof, we divide the lemma into the following three cases:
\begin{itemize}[leftmargin = 20mm]
 \item[\textsf{Case} 1:] \label{case1} $\beta > 0$ or $\beta = 0$ and $q_n \le 1$ when $n$ is sufficiently large.
 \item[\textsf{Case} 2:] $\beta < 0$ or $\beta = 0$ and $q_n \ge 1$ when $n$ is sufficiently large.
 \item[\textsf{Case} 3:] $\beta = 0$.
\end{itemize}
Firstly, \textsf{Case} 3 follows from \textsf{Case} 1 and \textsf{Case} 2 because if $\lim_{n \to \infty}n(1 - q_n) = 0$, we can divide the sequence $\{q_n\}_{n = 1}^{\infty}$ into two disjoint subsequences such that one of them falls into \textsf{Case} 1 and the other falls into \textsf{Case} 2.

Next we argue that \textsf{Case} 2 follows from \textsf{Case} 1. If $\pi \sim \m$, by Proposition \ref{P3}, we have $\pi^r \sim \mu_{n, 1/q}$. Since $\lim_{n \to \infty} n(1 -q_n) = \beta \in \mathbb{R}$, we have $\lim_{n \to \infty} q_n = 1$. Hence,
\[
\lim_{n \to \infty}n (1 - 1/q_n) = \lim_{n \to \infty}n(q_n - 1)/q_n = -\beta.
\]
Therefore, \textsf{Case} 2 follows from \textsf{Case} 1 by considering the reversal of $\pi$ and $\tau$ in (\ref{eq:L501a}). Specifically, if $\pi \sim \n$ and $\tau \sim \mu_{n, q'_n}$, after reversing, we have $\pi^r \sim \mu_{n, 1/q_n}$ and $\tau^r \sim \mu_{n, 1/q'_n}$ and the $n$ points induced by $\pi$ and $\tau$ do not change, i.e., $\bs{z}(\pi, \tau) = \bs{z}(\pi^r, \tau^r)$.

To prove \textsf{Case} 1, in the following, we assume $x_1, y_1 > 0$ and $x_2, y_2 < 1$. The proofs for the cases when $x_1 = 0$ or $y_1 = 0$ or $x_2 = 1$ or $y_2 = 1$ are similar.
Let $x_3 = y_3 = 1$. Given $n \in \mathbb{N}$, we will sample $(\pi, \tau)$ according to $\p_n$ by the method introduced before Lemma \ref{L38}. Define
\begin{align*}
d_{n,i} &\coloneqq \lfloor n x_i \rfloor,& c_{n,i} &\coloneqq d_{n,i} - d_{n, i-1},&  \text{for } i &= 1, 2, 3,\\
d'_{n,i} &\coloneqq \lfloor n y_i \rfloor, & c'_{n,i} &\coloneqq d'_{n,i} - d'_{n, i-1},& \text{for } i &= 1, 2, 3,
\end{align*}
where we assume that $d_{n,0} = d'_{n,0} = 0$. Then, it is trivial that
\begin{align*}
d_{n, i} &= |\{ j \in [n] : \textstyle\frac{j}{n} \in (0, x_i]\}|, & c_{n, 2} &= |\{j \in [n] : \textstyle\frac{j}{n} \in (x_1, x_2]\}|,\\
d'_{n, i} &= |\{ j \in [n] : \textstyle\frac{j}{n} \in (0, y_i]\}|, & c'_{n, 2} &= |\{j \in [n] : \textstyle\frac{j}{n} \in (y_1, y_2]\}|.
\end{align*}
Since $\lim_{n \to \infty}\textstyle\frac{\lfloor n x \rfloor}{n} = x, \forall x \in \mathbb{R}$, it follows that
$\lim_{n \to \infty}\textstyle\frac{d_{n, i}}{n} = x_i$. Hence
\begin{equation}
\lim_{n \to \infty}\frac{c_{n, 2}}{n} = x_2 - x_1 = \Delta x. \label{eq:L501b}
\end{equation}
Next, for any nonnegative integer $i$, define $B_i \coloneqq \{n \in \mathbb{N}: c_{n, 2} = i\}$. Clearly, $\{B_i\}_{i = 0}^{\infty}$ thus defined is a partition of $\mathbb{N}$ and we show that each $B_i$ is a nonempty finite set. Since, by (\ref{eq:L501b}), $\lim_{n \to \infty}c_{n, 2} = \infty$, we conclude that each $B_i$ is a finite set. From the definition of $d_{n, i}$, it is easily seen that the sequence $\{d_{n, 1}\}$ is nondecreasing and the increment of consecutive terms is either 0 or 1. The same is true for the sequence $\{d_{n, 2}\}$. Hence, we have
\[
|c_{n+1, 2} - c_{n, 2}| = |d_{n+1, 2} - d_{n, 2} - (d_{n+1, 1}, - d_{n, 1})| \le 1.
\]
Since $c_{1, 2} \in B_0$ and $\lim_{n \to \infty}c_{n, 2} = \infty$, the inequality above guarantees that each $B_i$ is nonempty.
Next, define $\bs{c}_n \coloneqq (c_{n, 1}, c_{n, 2}, c_{n, 3})$ and $\bs{c}'_n \coloneqq (c'_{n, 1}, c'_{n, 2}, c'_{n, 3})$. Define $X_{\bs{c}_n}$ which takes values in $A(\bs{c}_n)$ such that
\[
\p(X_{\bs{c}_n} = (A_1, A_2, A_3) ) \ \propto \ q_n^{\inv((A_1, A_2, A_3))}, \qquad \forall (A_1, A_2, A_3) \in A(\bs{c}_n).
\]
Independently, define three independent random variables $Y_{n,1}, Y_{n,2}, Y_{n,3}$ such that $Y_{n,i} \sim \mu_{c_{n,i}, q_n}$.
Independent of all the variables defined above, define $X_{\bs{c}'_n}$ and $Y'_{n,1}, Y'_{n,2}, Y'_{n,3}$ in the same fashion. That is, $X_{\bs{c}'_n}$ takes value in $A(\bs{c}'_n)$ with
\[
\p(X_{\bs{c}'_n} = (A'_1, A'_2, A'_3)) \ \propto \ (q'_n)^{\inv((A'_1, A'_2, A'_3))}, \qquad \forall (A'_1, A'_2, A'_3) \in A(\bs{c}'_n)
\]
and $Y'_{n,1}, Y'_{n,2}, Y'_{n,3}$ are three independent random variables with $Y'_{n,i} \sim \mu_{c'_{n,i}, q'_n}$.
Define
\[
\pi \coloneqq f_{\bs{c}_n}^{-1}(X_{\bs{c}_n}, Y_{n, 1}, Y_{n,2}, Y_{n,3}),
\qquad \tau \coloneqq f_{\bs{c'}_n}^{-1}(X_{\bs{c}'_n}, Y'_{n,1}, Y'_{n,2}, Y'_{n,3}).
\]
From the discussion before Lemma \ref{L38}, it follows that $(\pi, \tau)$ thus defined has distribution $\p_n$. Moreover, given $X_{\bs{c}_n} = (A_1, A_2, A_3)$ and $X_{\bs{c}'_n} = (A'_1, A'_2, A'_3)$, we have
\[
\textstyle A_2 = \left\{i \in [n] : \frac{\pi(i)}{n} \in (x_1, x_2]\right\}, \quad A'_2 = \left\{i \in [n] : \frac{\tau(i)}{n} \in (y_1, y_2]\right\}.
\]
Hence, we have
\begin{equation}
\textstyle A_2 \cap A'_2 = \left\{ i \in [n] : \left(\frac{\pi(i)}{n}, \frac{\tau(i)}{n}\right) \in R \right\}. \label{eq:L501bb}
\end{equation}
Define $M = |\bs{z}(\pi, \tau) \cap R|$, i.\,e.\,$M$ denotes the number of points $\{(\frac{\pi(i)}{n}, \frac{\tau(i)}{n})\}_{i = 1}^n$ within $R$. Then, by (\ref{eq:L501bb}), we have $M = |A_2 \cap A'_2|$. Hence, $M$ only depends on the values of $X_{\bs{c}_n}$ and $X_{\bs{c}'_n}$ and is independent of $\cup_{i\in[3]}\{Y_{n, i}, Y'_{n,i}\}$. Next note that, conditioning on $X_{\bs{c}_n} = (A_1, A_2, A_3)$ and $X_{\bs{c}'_n} = (A'_1, A'_2, A'_3)$, $l_R(\pi, \tau)$ is determined by $Y_{n,2}$ and $Y'_{n,2}$. To see this, we first define a new function $I$ as follows, given any finite set $A \subset \mathbb{Z}$ and any $a \in A$, define $I(A, a) \coloneqq k$ if $a$ is the $k$-th smallest number in $A$. Suppose $A_2 \cap A'_2 = \{a_j\}_{j \in [M]}$ with $a_1 < a_2 < \cdots < a_M$. Define $\bs{b}\in Q(c_{n, 2}, M)$ and $\bs{b}' \in Q(c'_{n, 2}, M)$ by
\begin{equation}\label{eq:L501bc}
\begin{aligned}
\bs{b} &\coloneqq (I(A_2, a_1), I(A_2, a_2), \ldots, I(A_2, a_M)),\\
\bs{b}' &\coloneqq (I(A'_2, a_1), I(A'_2, a_2), \ldots, I(A'_2, a_M)).
\end{aligned}
\end{equation}
Note that $\bs{b}$ and $\bs{b}'$ are determined by $A_2$ and $A'_2$. Then, we have
\begin{equation}
l_R(\pi, \tau) = \lis((Y_{n, 2})_{\bs{b}}, (Y'_{n, 2})_{\bs{b}'}). \label{eq:L501c}
\end{equation}
Indeed, conditioning on $X_{\bs{c}_n} = (A_1, A_2, A_3)$, we know that $\{\pi(i) : i \in A_2\} = \{d_{n,1}+1, d_{n,1}+2, \ldots, d_{n,2}\}$. And the value of $Y_{n, 2}$ determines the relative ordering of $\pi(i)$ for those $i \in A_2$. Similarly, the value of $Y'_{n, 2}$ determines the relative ordering of $\tau(i)$ for those $i \in A'_2$.

Now we are in the position to prove (\ref{eq:L501a}) for \hyperref[case1]{\textsf{Case} 1}. From the discussion above and Lemma \ref{L38}, it suffices to show that, for any sequence $\{s_n\}_{n=1}^{\infty}$ with $s_n \in B_n$, i.e., when $c_{s_n, 2} = n$, we have
\begin{equation}
\lim_{n \to \infty}\textstyle\p_{s_n}\left(\frac{l_R(\pi, \tau)}{\sqrt{s_n\rho(R)}} \in \Big(2e^{-\Delta x \beta /2 } -\epsilon,\ 2 e^{\Delta x \beta/2} + \epsilon\Big)\right) = 1, \label{eq:L501d}
\end{equation}
for any $\epsilon > 0$. Note that by the definition of $\p_{s_n}$ in Lemma \ref{L100},  $\pi$ and $\tau$ above are of size $s_n$ with $\pi \sim \mu_{s_n, q_{s_n}}$, $\tau \sim \mu_{s_n, q'_{s_n}}$.

We separate the proof of (\ref{eq:L501d}) into two parts. Specifically, we need to show that, for any $\epsilon > 0$,
\begin{equation}
\lim_{n \to \infty}\textstyle\p_{s_n}\left(\frac{l_R(\pi, \tau)}{\sqrt{s_n\rho(R)}} < 2 e^{\Delta x \beta/2} + \epsilon\right) = 1, \label{eq:L501e1}
\end{equation}
and
\begin{equation}
\lim_{n \to \infty}\textstyle\p_{s_n}\left(\frac{l_R(\pi, \tau)}{\sqrt{s_n\rho(R)}} > 2 e^{- \Delta x \beta/2} - \epsilon\right) = 1. \label{eq:L501e2}
\end{equation}

Since $\{s_n\}_{n \ge 1}$ is a subsequence of $\{i\}_{i \ge 0}$, $\lim_{n \to \infty} s_n = \infty$. Hence, by (\ref{eq:L501b}) and the fact that $c_{s_n, 2} = n$, we get
\[
\lim_{n \to \infty}\frac{n}{s_n} = \lim_{n \to \infty}\frac{c_{s_n, 2}}{s_n} = \Delta x.
\]
Thus,
\begin{equation}
\lim_{n \to \infty}n(1 - q_{s_n}) = \lim_{n \to \infty}\frac{n}{s_n}s_n(1 - q_{s_n}) = \Delta x \beta < \ln{2}. \label{eq:L501f}
\end{equation}
To prove (\ref{eq:L501e1}), for any $\epsilon > 0$, we can choose $\epsilon_1>0$ sufficiently small such that
\begin{equation}
(1 - \epsilon_1)(2 e^{\Delta x \beta/2} + \epsilon) > 2 e^{\Delta x \beta/2}. \label{eq:L501g}
\end{equation}
For this fixed $\epsilon_1$, we can choose $\delta > 0$ such that
\begin{equation}
\textstyle\sqrt{\frac{\rho(R)}{\rho(R) + \delta}} > 1 - \epsilon_1. \label{eq:L501h}
\end{equation}
Given $n \in \mathbb{N}$, define $k_n = \lfloor s_n(\rho(R) + \delta) \rfloor$. Clearly, we have $\lim_{n \to \infty}k_n = \infty$. Hence, by Lemma \ref{L40}, (\ref{eq:L501f}) and (\ref{eq:L501g}), there exists $N_1 > 0$ such that, for any $n > N_1$, we have
\begin{equation}
\min_{\bs{b}\in Q(n, k_n)}\textstyle\mu_{n, q_{s_n}}\left(\eta \in S_n : \frac{\lis(\eta_{\bs{b}})}{\sqrt{k_n}} < (1 - \epsilon_1)\big(2 e^{\Delta x \beta/2} + \epsilon\big)\right) > 1 - \epsilon.          \label{eq:L501i}
\end{equation}
Given $\bs{b}\in Q(n, k_n)$, for any $\bs{b}'$ which is a subsequence of $\bs{b}$, we have $\lis(\eta_{\bs{b}}) \ge \lis(\eta_{\bs{b}'})$. Thus we can make (\ref{eq:L501i}) stronger as follows,
\begin{equation}
\min_{\bs{b}\in \bar{Q}(n, k_n)}\textstyle\mu_{n, q_{s_n}}\left(\eta \in S_n : \frac{\lis(\eta_{\bs{b}})}{\sqrt{k_n}} < (1 - \epsilon_1)\big(2 e^{\Delta x \beta/2} + \epsilon\big)\right) > 1 - \epsilon,         \label{eq:L501j}
\end{equation}
where $\bar{Q}(n, k_n) = \cup_{i \in [k_n]} Q(n, i)$.
Since $\lim_{n \to \infty} s_n = \infty$, we have
\begin{equation}
\lim_{n \to \infty}s_n(1 - q_{s_n}) = \beta \quad \text{ and } \quad \lim_{n \to \infty}s_n(1 - q'_{s_n}) = \gamma. \label{eq:L501jj}
\end{equation}
Hence, by Lemma \ref{L100}, there exists $N_2 >0$ such that, for any $n > N_2$, we have
\begin{equation}
\textstyle\p_{s_n}\left(\frac{|\bs{z}(\pi, \tau) \cap R|}{s_n} \le \rho(R) + \delta\right) > 1 - \epsilon.  \label{eq:L501k}
\end{equation}
In the following, let $E_n(A_2, A'_2)$ denote the event that the second entries of $X_{\bs{c}_{s_n}}$ and $X_{\bs{c}'_{s_n}}$ are $A_2$ and $A'_2$ respectively. Let $\p$ denote the probability space on which $(X_{\bs{c}_{s_n}}, Y_{s_n, 1}, Y_{s_n, 2}, Y_{s_n, 3})$ and $(X_{\bs{c}'_{s_n}}, Y'_{s_n, 1}, Y'_{s_n, 2}, Y'_{s_n, 3})$ are defined. Then, for any $n > \max(N_1, N_2)$, we have
\begin{align*}
  &\textstyle\p_{s_n}\Big(\frac{l_R(\pi, \tau)}{\sqrt{s_n\rho(R)}} < 2 e^{\Delta x \beta/2} + \epsilon\Big)\\
  \ge&\sum_{|A_2\cap A'_2| \le k_n}\textstyle\p\Big(\frac{l_R(\pi, \tau)}{\sqrt{s_n\rho(R)}} < 2 e^{\Delta x \beta/2} + \epsilon \ \big| \
  E_n(A_2, A'_2)\Big)\times\p(E_n(A_2, A'_2))\\
  =&\sum_{|A_2\cap A'_2| \le k_n}\textstyle\p\Big(\frac{\lis((Y_{s_n, 2})_{\bs{b}},\ (Y'_{s_n, 2})_{\bs{b}'})}{\sqrt{s_n\rho(R)}} < 2 e^{\Delta x \beta/2} + \epsilon \Big)\times\p(E_n(A_2, A'_2))\\
  \ge&\sum_{|A_2\cap A'_2| \le k_n}\textstyle\mu_{n, q_{s_n}}\Big(\frac{\lis(\eta_{\bs{b}})}{\sqrt{s_n\rho(R)}} < 2 e^{\Delta x \beta/2} + \epsilon \Big)\times\p(E_n(A_2, A'_2))\\
  =&\sum_{|A_2\cap A'_2| \le k_n}\textstyle\mu_{n, q_{s_n}}\Big(\frac{\lis(\eta_{\bs{b}})}{\sqrt{s_n(\rho(R)+ \delta)}} < \frac{\sqrt{\rho(R)}}{\sqrt{\rho(R)+\delta}}(2 e^{\Delta x \beta/2} + \epsilon) \Big)\\
  & \qquad\qquad\qquad\qquad\qquad\qquad\qquad\qquad\qquad \times\p(E_n(A_2, A'_2))\\
  \ge&\sum_{|A_2\cap A'_2| \le k_n}\textstyle\mu_{n, q_{s_n}}\Big(\frac{\lis(\eta_{\bs{b}})}{\sqrt{k_n}} < (1 - \epsilon_1)(2 e^{\Delta x \beta/2} + \epsilon) \Big)\times\p(E_n(A_2, A'_2))\\
  \ge& \ (1 - \epsilon)\times\sum_{|A_2\cap A'_2| \le k_n}\p(E_n(A_2, A'_2))\\
  =& \ (1 - \epsilon) \times \p_{s_n}\big( |\bs{z}(\pi, \tau) \cap R| \le k_n\big)\\
  =& \ (1 - \epsilon) \times \p_{s_n}\big( |\bs{z}(\pi, \tau) \cap R| \le s_n(\rho(R)+\delta)\big)\\
  >& \ (1 - \epsilon)^2.
\end{align*}
The first equality follows by (\ref{eq:L501c}) and the independence of $(X_{\bs{c}_{s_n}}, X_{\bs{c}'_{s_n}})$ and $(Y_{s_n, 2}, Y'_{s_n, 2})$. Note that $\bs{b}$ and $\bs{b}'$ are determined by $A_2$ and $A'_2$ as in (\ref{eq:L501bc}). The second inequality follows by Lemma \ref{L360}, since $Y_{s_n, 2}$ and $Y'_{s_n, 2}$ are independent with $Y_{s_n, 2} \sim \mu_{n, q_{s_n}}$. The third inequality follows by (\ref{eq:L501h}) and the fact that $k_n = \lfloor s_n(\rho(R) + \delta) \rfloor \le s_n(\rho(R) + \delta)$. The fourth inequality follows by (\ref{eq:L501j}) and the fact that the dimension of $\bs{b}$ equals to $|A_2\cap A'_2|$. The last inequality follows by (\ref{eq:L501k}). Hence, (\ref{eq:L501e1}) follows.

The proof of (\ref{eq:L501e2}) follows in a similar way as the proof of (\ref{eq:L501e1}). First, by (\ref{eq:L501f}) and the fact that $\lim_{n \to \infty}q_n = 1$, we have
\begin{equation}
\lim_{n \to \infty}n(1 - 1/q_{s_n}) = \lim_{n \to \infty}\frac{n(q_{s_n} -1)}{q_{s_n}} = - \Delta x \beta > - \ln{2}. \label{eq:L501l}
\end{equation}
For any $\epsilon >0$, we can choose $\epsilon_1 > 0$ sufficiently small such that
\begin{equation}
(1 + \epsilon_1)(2 e^{- \Delta x \beta/2} - \epsilon) < 2 e^{ -\Delta x \beta/2}.  \label{eq:L501m}
\end{equation}
For this fixed $\epsilon_1$, we can choose $\delta > 0$ such that
\begin{equation}
\textstyle\sqrt{\frac{\rho(R)}{\rho(R) - \delta}} < 1 + \epsilon_1.  \label{eq:L501n}
\end{equation}
Given $n \in \mathbb{N}$, define $k'_n = \lceil s_n(\rho(R) - \delta) \rceil$. Clearly, we have $\lim_{n \to \infty}k'_n = \infty$. Moreover, under conditions of \hyperref[case1]{\textsf{Case} 1}, $1/q_n \ge 1$ for sufficiently large $n$. Hence, by Lemma \ref{L40}, (\ref{eq:L501l}) and (\ref{eq:L501m}), there exist $N_3 > 0$ such that, for any $n > N_3$, we have
\begin{equation}
\min_{\bs{b}\in Q(n, k'_n)}\textstyle\mu_{n, 1/q_{s_n}}\left(\eta \in S_n : \frac{\lis(\eta_{\bs{b}})}{\sqrt{k'_n}} > (1 + \epsilon_1)\big(2 e^{- \Delta x \beta/2} - \epsilon\big)\right) > 1 - \epsilon.      \label{eq:L501o}
\end{equation}
Given $\bs{b}\in Q(n, k'_n)$, for any $\bs{b}'$ such that $\bs{b}$ is a subsequence of $\bs{b}'$, we have $\lis(\eta_{\bs{b}}) \le \lis(\eta_{\bs{b}'})$. Thus we can make (\ref{eq:L501o}) stronger as follows,
\begin{equation}
\min_{\bs{b}\in \hat{Q}(n, k'_n)}\textstyle\mu_{n, 1/q_{s_n}}\left(\eta \in S_n : \frac{\lis(\eta_{\bs{b}})}{\sqrt{k'_n}} > (1 + \epsilon_1)\big(2 e^{- \Delta x \beta/2} - \epsilon\big)\right) > 1 - \epsilon,          \label{eq:L501p}
\end{equation}
where $\hat{Q}(n, k'_n) = \cup_{k'_n\le i \le n}Q(n, i)$.
By (\ref{eq:L501jj}) and Lemma \ref{L100}, there exists $N_4 >0$ such that, for any $n > N_4$, we have
\begin{equation}
\textstyle\p_{s_n}\left(\frac{|\bs{z}(\pi, \tau) \cap R|}{s_n} \ge \rho(R) - \delta\right) > 1 - \epsilon.  \label{eq:L501q}
\end{equation}
Then, assuming the notations defined in the proof of (\ref{eq:L501e1}), for any $n > \max(N_3, N_4)$, we have
\begin{align*}
  &\p_{s_n}\textstyle\Big(\frac{l_R(\pi, \tau)}{\sqrt{s_n\rho(R)}} > 2 e^{-\Delta x \beta/2} - \epsilon\Big)\\
  \ge&\sum_{|A_2\cap A'_2| \ge k_n}\textstyle\p\Big(\frac{l_R(\pi, \tau)}{\sqrt{s_n\rho(R)}} > 2 e^{-\Delta x \beta/2} - \epsilon \ \big| \
  E_n(A_2, A'_2)\Big)\times\p(E_n(A_2, A'_2))\\
  =&\sum_{|A_2\cap A'_2| \ge k'_n}\textstyle\p\Big(\frac{\lis((Y_{s_n, 2})_{\bs{b}},\ (Y'_{s_n, 2})_{\bs{b}'})}{\sqrt{s_n\rho(R)}}  > 2 e^{-\Delta x \beta/2} - \epsilon \Big)\times\p(E_n(A_2, A'_2))\\
  \ge&\sum_{|A_2\cap A'_2| \ge k'_n}\textstyle\mu_{n, 1/q_{s_n}}\Big(\frac{\lis(\eta_{\bar{\bs{b}}})}{\sqrt{s_n\rho(R)}} > 2 e^{-\Delta x \beta/2} - \epsilon \Big)\times\p(E_n(A_2, A'_2))\\
  =&\sum_{|A_2\cap A'_2| \ge k'_n}\textstyle\mu_{n, 1/q_{s_n}}\Big(\frac{\lis(\eta_{\bar{\bs{b}}})}{\sqrt{s_n(\rho(R) - \delta)}} > \frac{\sqrt{\rho(R)}}{\sqrt{\rho(R)-\delta}}(2 e^{-\Delta x \beta/2} - \epsilon) \Big)\\
  & \qquad\qquad\qquad\qquad\qquad\qquad\qquad\qquad\qquad\qquad\qquad \times\p(E_n(A_2, A'_2))\\
  \ge&\sum_{|A_2\cap A'_2| \ge k'_n}\textstyle\mu_{n, 1/q_{s_n}}\Big(\frac{\lis(\eta_{\bar{\bs{b}}})}{\sqrt{k'_n}} > (1 + \epsilon_1)(2 e^{- \Delta x \beta/2} - \epsilon) \Big)\times\p(E_n(A_2, A'_2))\\
  \ge& \ (1 - \epsilon)\times\sum_{|A_2\cap A'_2| \ge k'_n}\p(E_n(A_2, A'_2))\\
  =& \ (1 - \epsilon) \times \p_{s_n}\big( |\bs{z}(\pi, \tau) \cap R| \ge k'_n\big)\\
  =& \ (1 - \epsilon) \times \p_{s_n}\big( |\bs{z}(\pi, \tau) \cap R| \ge s_n(\rho(R)-\delta)\big)\\
  >& \ (1 - \epsilon)^2.
\end{align*}
The first equality follows by (\ref{eq:L501c}) and the independence of $(X_{\bs{c}_{s_n}}, X_{\bs{c}'_{s_n}})$ and $(Y_{s_n, 2}, Y'_{s_n, 2})$. The second inequality follows by Lemma \ref{L370}, since $Y_{s_n, 2}$ and $Y'_{s_n, 2}$ are independent with $Y_{s_n, 2} \sim \mu_{n, q_{s_n}}$. The third inequality follows by (\ref{eq:L501n}) and the fact that 
$
k'_n = \lceil s_n(\rho(R) - \delta) \rceil \ge s_n(\rho(R) - \delta).
$
The fourth inequality follows by (\ref{eq:L501p}) and the fact that $\bar{\bs{b}}$ has the same dimension as of $\bs{b}$ which equals to $|A_2\cap A'_2|$. The last inequality follows by (\ref{eq:L501q}). Hence, (\ref{eq:L501e2}) follows and this completes the proof of Lemma \ref{L501}.
\end{proof}

\subsection{Deuschel and Zeitouni's approach}

The following lemma establishes certain degree of smoothness of the densities $u$ and $\rho$ defined in Lemma \ref{L100}.


\begin{lemma}\label{L101}
The density functions $u(x, y, \beta)$ defined in (\ref{eq:M1}) and $\rho(x, y)$ defined in (\ref{eq:L100z}) satisfy the following,
\begin{itemize}
\item[(a)] $e^{-|\beta|} \le u(x, y, \beta) \le e^{|\beta|}$, $e^{-|\beta|-|\gamma|} \le \rho(x, y) \le e^{|\beta|+|\gamma|}$,
\item[(b)] $u(x, y, \beta)\in C_b^1,\, \rho(x, y) \in C_b^1$,
\item[(c)] $\max{\left(\big|\frac{\partial u}{\partial x}\big|, \big|\frac{\partial u}{\partial y}\big|\right)} \le |\beta|e^{|\beta|}$,
\item[(d)] $\max{\left(\big|\frac{\partial \rho}{\partial x}\big|, \big|\frac{\partial \rho}{\partial y}\big|\right)} \le
(|\beta|+|\gamma|)e^{|\beta|+|\gamma|}$,
\end{itemize}
where $(x, y) \in [0, 1] \times [0, 1]$.
\end{lemma}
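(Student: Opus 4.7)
My plan is to push all four parts through direct manipulation of the closed-form (\ref{eq:M1}) and then propagate the bounds to $\rho$ via the integral representation (\ref{eq:L100z}). Writing
\[
D(x,y,\beta) := e^{\beta/4}\cosh(\beta[x-y]/2) - e^{-\beta/4}\cosh(\beta[x+y-1]/2)
\]
so that $u = (\beta/2)\sinh(\beta/2)/D^2$, my single algebraic workhorse will be the equivalent form
\[
D(x,y,\beta) = e^{\beta(1-2y)/4}\sinh(\beta x/2) + e^{\beta(2y-1)/4}\sinh(\beta(1-x)/2),
\]
which one verifies by expanding $e^{\pm\beta/4} = \cosh(\beta/4)\pm\sinh(\beta/4)$ and applying the sum/difference formulas for $\cosh$.

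For part (a), I would note that when $\beta>0$ both summands on the right are nonnegative, the exponential weights lie in $[e^{-\beta/4},e^{\beta/4}]$, and the identity $\sinh(\beta x/2)+\sinh(\beta(1-x)/2) = 2\sinh(\beta/4)\cosh(\beta(2x-1)/4)$ pins $D$ between $2e^{-\beta/4}\sinh(\beta/4)$ and $e^{\beta/4}\sinh(\beta/2)$. Substituting into $u = (\beta/2)\sinh(\beta/2)/D^2$ and simplifying, the bounds $e^{-\beta}\le u\le e^\beta$ reduce to the elementary inequalities $1-e^{-\beta}\le\beta\le e^\beta-1$. The case $\beta<0$ follows from the symmetry $u(x,y,-\beta) = u(x,1-y,\beta)$, immediate from (\ref{eq:M1}) after squaring the denominator. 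The bounds on $\rho$ then follow by bounding the two factors in (\ref{eq:L100z}) pointwise and integrating.

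For parts (b)--(d), the key step I would prove is the pointwise inequality
\[
|D_x|,\ |D_y| \le \tfrac{|\beta|}{2}|D|.
\]
With $A = \beta(x-y)/2$ and $B=\beta(x+y-1)/2$, one has $D_x=\tfrac{\beta}{2}[e^{\beta/4}\sinh A - e^{-\beta/4}\sinh B]$ and $D_y = -\tfrac{\beta}{2}[e^{\beta/4}\sinh A + e^{-\beta/4}\sinh B]$. I would square both sides and use $\cosh^2-\sinh^2 = 1$ together with the addition formulas, reducing the two inequalities to $\cosh(A-B)\le\cosh(\beta/2)$ and $\cosh(A+B)\le\cosh(\beta/2)$, i.e., to the trivial $|1-2y|\le 1$ and $|2x-1|\le 1$. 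Given this, part (b) is immediate since (a) keeps $|D|$ uniformly away from zero; part (c) follows from $|\partial u/\partial x| = 2u|D_x|/|D| \le 2\cdot e^{|\beta|}\cdot(|\beta|/2) = |\beta|e^{|\beta|}$; and part (d) comes from differentiating (\ref{eq:L100z}) under the integral and pairing the $x$-derivative of one factor with the pointwise bound on the other, yielding $|\partial\rho/\partial x|\le|\beta|e^{|\beta|+|\gamma|}$ and $|\partial\rho/\partial y|\le|\gamma|e^{|\beta|+|\gamma|}$, both bounded by $(|\beta|+|\gamma|)e^{|\beta|+|\gamma|}$.

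The hard part will be finding the right formulation of (c). A naive triangle-inequality estimate on $|D_x|$ introduces factors of $\cosh(|\beta|/2)$ and $e^{|\beta|/4}$ which, when combined with the crude lower bound on $|D|$ from (a), overshoot the clean constant $|\beta|e^{|\beta|}$. The trick — which I expect to be the one genuine insight in the lemma — is to square first so that the difference of $\cosh$'s on each side collapses via $\cosh^2-\sinh^2=1$ to an elementary inequality on $|A\pm B|$.
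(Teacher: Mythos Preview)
Your proposal is correct and complete, but it diverges from the paper in two places worth flagging.

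For part (a) both arguments bound the denominator directly, but the paper works with $2e^{\beta/2}\cosh A - 2\cosh B$ and obtains the sharp lower bound $e^{\beta/2}-e^{-\beta/2}$, which makes the upper bound on $u$ reduce exactly to $\beta\le e^\beta-1$. Your factorization $D=e^{\beta(1-2y)/4}\sinh(\beta x/2)+e^{\beta(2y-1)/4}\sinh(\beta(1-x)/2)$ is valid, but the crude weight bound $e^{\pm\beta(1-2y)/4}\ge e^{-|\beta|/4}$ loses a factor of $\cosh(\beta/4)$: it yields $D\ge 2e^{-\beta/4}\sinh(\beta/4)$ rather than $D\ge e^{-\beta/4}\sinh(\beta/2)$, and the resulting inequality to check is $\beta\cosh^2(\beta/4)\le e^\beta-1$, not $\beta\le e^\beta-1$. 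This is still elementary and true, but your sentence ``reduce to the elementary inequalities $1-e^{-\beta}\le\beta\le e^\beta-1$'' overstates the simplification on the upper side.

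For part (c) the two arguments are genuinely different. The paper invokes Starr's identity $\partial_x\partial_y\ln u=2\beta u$, integrates in $x$ using the boundary values $u(0,y,\beta)$ and $u(1,y,\beta)$, and arrives at $\partial_y u = 2\beta u\bigl(\int_0^x u(t,y,\beta)\,dt-\tfrac12\bigr)$; since the marginal integral lies in $[0,1]$ this gives $|\partial_y u|\le|\beta|u$ immediately. Your route is a direct computation on $D$: squaring $|D_x|\le(|\beta|/2)|D|$ and collapsing via $\cosh^2-\sinh^2=1$ to $\cosh(A-B)\le\cosh(\beta/2)$ is clean and self-contained, and yields the same intermediate bound $|\partial u|\le|\beta|u$. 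The paper's argument is more conceptual and uses that $u$ is a probability density; yours avoids the external input from \cite{Starr} at the cost of a short hyperbolic-identity computation. Either way part (d) then follows identically by differentiating under the integral.
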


\begin{proof}
First we show that $e^{-|\beta|} \le u(x, y, \beta) \le e^{|\beta|}$ for any $0 \le x, y \le 1$. Here we assume $\beta > 0$. The proof for the case when $\beta < 0$ is similar. By (\ref{eq:M1}), we have
\begin{align}
u(x, y, \beta) &= \textstyle \frac{(\beta/2) \sinh(\beta/2)}{\left(e^{\beta/4} \cosh(\beta[x-y]/2)-e^{-\beta/4}\cosh(\beta[x+y-1]/2)\right)^2} \nonumber\\
 &=\textstyle \frac{\beta (e^{\beta} - 1)}{\left(2e^{\beta/2} \cosh(\beta[x-y]/2)-2\cosh(\beta[x+y-1]/2)\right)^2}. \label{eq:L101x}
\end{align}
Since $-1 \le x - y \le 1$ and $-1 \le x + y -1 \le 1$, we have
\begin{align}
2e^{\beta/2} &\le 2e^{\beta/2} \cosh(\beta[x-y]/2) \le e^{\beta} + 1, \label{eq:L101s}\\
2 &\le 2\cosh(\beta[x+y-1]/2) \le e^{\beta/2} + e^{-\beta/2}. \label{eq:L101t}
\end{align}
Since $e^{\beta/2} + e^{-\beta/2} < 2e^{\beta/2}$, from (\ref{eq:L101s}) and (\ref{eq:L101t}), we have
\begin{equation}\label{eq:L101y}
e^{\beta/2} - e^{-\beta/2} \le 2e^{\beta/2} \cosh(\beta[x-y]/2)-2\cosh(\beta[x+y-1]/2) \le e^{\beta}-1. 
\end{equation}
By (\ref{eq:L101x}) and (\ref{eq:L101y}), it follows that
\begin{equation}\label{eq:L101w}
\textstyle \frac{\beta}{e^{\beta}-1} \le u(x, y, \beta) \le 
\frac{\beta(e^{\beta}-1)}{(e^{\beta/2} - e^{-\beta/2})^2}.
\end{equation}
It is easily verified that
\begin{align}
\textstyle \frac{\beta}{e^{\beta}-1} \ge e^{-\beta} &\Longleftrightarrow 
e^{-\beta} \ge 1 - \beta, \label{eq:L101v}\\
\textstyle \frac{\beta(e^{\beta}-1)}{(e^{\beta/2} - e^{-\beta/2})^2} \le e^{\beta} &\Longleftrightarrow (e^{\beta}-1)(e^{\beta} - 1 - \beta) \ge 0. \label{eq:L101u}
\end{align}
By the inequality $e^x \ge 1 + x$, the right-hand side of (\ref{eq:L101v}) and (\ref{eq:L101u}) hold. It follows from (\ref{eq:L101w}) and the left-hand side of (\ref{eq:L101v}) and (\ref{eq:L101u}) that
\[
e^{-\beta} \le u(x, y, \beta) \le e^{\beta}, \qquad \forall \ 0 \le x, y \le 1.
\]
By the definition of $\rho(x, y)$, it follows trivially that 
\[
e^{-|\beta| - |\gamma|} \le \rho(x, y) \le e^{|\beta| + |\gamma|}, \qquad \forall \ 0 \le x, y \le 1.
\]
In \cite{Starr}, Starr shows that $\frac{\partial^2 \ln{u(x, y, \beta)}}{\partial x \partial y}  = 2 \beta u(x, y, \beta)$. Thus
\begin{equation}\label{eq:L101z}
\textstyle \int_{0}^{x} u(t, y, \beta) \, dt =
\frac{1}{2\beta} \left( \frac{\partial \ln{u(x, y, \beta)}}{\partial y} - \frac{\partial \ln{u(0, y, \beta)}}{\partial y} \right).
\end{equation}
By direct calculation, we have $u(1, y, \beta) = \frac{\beta e^{\beta y}}{e^{\beta} - 1}$, $u(0, y, \beta) = \textstyle \frac{\beta e^{-\beta y}}{1 - e^{-\beta}}$. Therefore, we get $\frac{\partial \ln{u(1, y, \beta)}}{\partial y} = \beta$ and $\frac{\partial \ln{u(0, y, \beta)}}{\partial y} = - \beta$.
By (\ref{eq:L101z}), it follows that
\begin{equation}
\textstyle \frac{\partial u(x, y, \beta)}{\partial y} = 2 \beta u(x, y, \beta) \left( \int_{0}^{x} u(t, y, \beta) \, dt - \frac{1}{2} \right),
\label{eq:L101a}
\end{equation}
and
\begin{equation}\label{eq:L101aa}
\textstyle \int_{0}^{x} u(t, y, \beta) \, dt \le \int_{0}^{1} u(t, y, \beta) \, dt = 1.
\end{equation}
From (\ref{eq:L101a}) and (\ref{eq:L101aa}), we get
\begin{equation}
\textstyle \left|\frac{\partial u}{\partial y}\right| \le |\beta|u(x, y, \beta) \le |\beta|e^{|\beta|}.
\end{equation}
Since $u(x, y, \beta)$ is uniformly continuous on $[0, 1]\times[0, 1]$, $\int_{0}^{x} u(t, y, \beta) \, dt$ is also continuous on $[0, 1]\times[0, 1]$. Hence, by (\ref{eq:L101a}), $\frac{\partial u}{\partial y}$ is bounded and continuous on $[0, 1]\times[0, 1]$. Similar argument can be made for $\frac{\partial u}{\partial x}$. Thus we have shown that $u(x, y, \beta) \in C_b^1$ and
\[
\textstyle \max\left(\big|\frac{\partial u}{\partial x}\big|,  \big|\frac{\partial u}{\partial y}\big|\right) \le |\beta|e^{|\beta|}.
\]
Next, since $\big|\frac{\partial u(x, t, \beta)}{\partial x}\cdot u(t, y, \gamma)\big| \le |\beta|e^{|\beta|+|\gamma|}$ for any $0 \le x, y, t \le 1$, by dominated convergence theorem, we have
\begin{equation}\label{eq:L101b}
\textstyle \frac{\partial \rho(x, y)}{\partial x} = \frac{\partial }{\partial x}\Big(\int_{0}^{1}u(x, t, \beta)u(t, y, \gamma)\, dt\Big) = \int_{0}^{1}\frac{\partial u(x, t, \beta)}{\partial x}u(t, y, \gamma)\,dt.
\end{equation}
Hence, $\big|\frac{\partial \rho}{\partial x}\big| \le |\beta|e^{|\beta|+|\gamma|}$. Moreover, $\frac{\partial u(x, t, \beta)}{\partial x}\cdot u(t, y, \gamma)$ as a function of $x, y, t$ is uniformly continuous on $[0, 1]\times[0, 1]\times[0, 1]$. Thus, by (\ref{eq:L101b}), $\frac{\partial \rho}{\partial x}$ is continuous on $[0, 1]\times[0, 1]$. By a similar argument, it can be shown that $\frac{\partial \rho}{\partial y}$ is continuous on $[0, 1]\times[0, 1]$, and $\big|\frac{\partial \rho}{\partial y}\big| \le |\gamma|e^{|\beta|+|\gamma|}$. Therefore, $\rho(x, y) \in C_b^1$ and
\[
\textstyle \max{\left(\big|\frac{\partial \rho}{\partial x}\big|, \big|\frac{\partial \rho}{\partial y}\big|\right)} \le (|\beta|+|\gamma|)e^{|\beta|+|\gamma|}. \qedhere
\]
\end{proof}


The next lemma shows that for any non-decreasing curve in the unit square, in a strip of small width around it, with probability going to 1, there exists an increasing subsequence whose length can be bounded from below.
The proof of Lemma \ref{L54} uses similar arguments as in the proof of Lemma 8 in \cite{DZ95}. Before stating the lemma, we need the following notation.

\begin{definition}\label{D51}
Let $B_{\nearrow}$ be the set of nondecreasing, right continuous functions $\phi : [0, 1] \rightarrow [0, 1]$. For $\phi \in B_{\nearrow}$, we have $\phi(x) = \int_{0}^{x}\dot{\phi}(t)\,dt + \phi_s(x)$, where $\phi_s$ is singular and has a zero derivative almost everywhere. Let $\rho(x, y)$ be the density defined in (\ref{eq:L100z}). Define function $J : B_{\nearrow} \rightarrow \mathbb{R}$,
\[
J(\phi) \coloneqq \int_0^1 \sqrt{\dot{\phi}(x) \rho(x, \phi(x))} \, dx \quad  \text{ and } \quad \bar{J} \coloneqq \sup_{\phi \in B_{\nearrow}} J(\phi).
\]
\end{definition}
\begin{remark*}
By Theorems 3 and 4 in \cite{DZ95} it follows from Lemma \ref{L101} (a) and (b), that
\[
\sup_{\phi \in B_{\nearrow}} J(\phi) = \sup_{\phi \in B^1_{\nearrow}} J(\phi),
\]
where $B^1_{\nearrow}$ is defined in Theorem \ref{M3}. Hence we use the same notation $\bar{J}$ to denote the supremum over $B_{\nearrow}$.
\end{remark*}

Given a function $\phi(x)$ and any $\delta > 0$, we say that a point $(x, y)$ is in the $\delta$ neighborhood of $\phi$ if $\phi(x) - \delta < y < \phi(x) + \delta$.

\begin{lemma}\label{L54}
Under the same conditions as in Theorem \ref{M3}, for any $\phi \in B^1_{\nearrow}$ and any $\delta, \epsilon > 0$, define the event
\begin{align*}
E_n \coloneqq \Big\{&(\pi, \tau) \in S_n \times S_n :
\exists \text{ an increasing subsequence of } \textstyle\big\{\big(\frac{\pi(i)}{n}, \frac{\tau(i)}{n}\big)\big\}_{i \in [n]}\\
 &\text{ which is wholly contained in the $\delta$ neighborhood of } \phi(\cdot)\\
 &\text{ and the length of which is greater than } 2J(\phi)(1 - \epsilon)\sqrt{n}\ \Big\}.
\end{align*}
Then
\[
\lim_{n \to \infty} \p_n ( E_n ) = 1.
\]
\end{lemma}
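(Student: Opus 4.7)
The plan is the Deuschel--Zeitouni partition-and-concatenate strategy. Fix a large integer $m$ (to be chosen below), set $\Delta x = 1/m$, $x_k = k/m$, and define
\[
R_k \coloneqq (x_{k-1}, x_k] \times (\phi(x_{k-1}), \phi(x_k)], \qquad k = 1, \ldots, m.
\]
Because $\phi$ is nondecreasing, both the $x$- and $y$-projections of the $R_k$'s are pairwise disjoint and ordered in $k$; consequently any choice of an increasing subsequence of $\bs{z}(\pi,\tau)$ inside each $R_k$ concatenates in order of $k$ into a single increasing subsequence of $\bs{z}(\pi,\tau)$. The $C^1_b$ regularity of $\phi$ gives $|y - \phi(x)| \le \phi(x_k) - \phi(x_{k-1}) \le \|\dot\phi\|_\infty \Delta x$ for every $(x,y)\in R_k$, so choosing $m > \|\dot\phi\|_\infty/\delta$ places every $R_k$ inside the $\delta$-neighborhood of $\phi$; degenerate boxes with $\phi(x_k) = \phi(x_{k-1})$ contribute nothing on either side of the target inequality and are simply dropped.

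Next, choose $m$ large enough that also $\Delta x \,|\beta| < \ln 2$, and apply Lemma \ref{L501} in each non-degenerate $R_k$. For every $\epsilon'' > 0$ the lemma gives
\[
\p_n\!\left( l_{R_k}(\pi, \tau) \ge \bigl(2 e^{-\Delta x |\beta|/2} - \epsilon''\bigr)\sqrt{n\,\rho(R_k)}\right) \to 1, \qquad n \to \infty,
\]
for each fixed $k$; since there are only $m$ such $k$'s, a union bound yields the same statement simultaneously across all $k$. Concatenating these pieces produces an increasing subsequence of $\bs{z}(\pi,\tau)$ lying in the $\delta$-neighborhood of $\phi$ and of length at least $\bigl(2e^{-\Delta x |\beta|/2} - \epsilon''\bigr)\sum_k \sqrt{n\,\rho(R_k)}$ with probability tending to $1$.

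It remains to show that $\sum_k \sqrt{\rho(R_k)}$ can be made arbitrarily close to $J(\phi)$ as $m \to \infty$. The mean value theorem applied to $\phi$ gives $\phi(x_k)-\phi(x_{k-1}) = \dot\phi(\zeta_k)\Delta x$ for some $\zeta_k \in (x_{k-1}, x_k]$, and Lemma \ref{L101}(b),(d) implies that $\rho$ is uniformly continuous on $[0,1]^2$, so
\[
\rho(R_k) = \rho(x_k,\phi(x_k))\,\dot\phi(\zeta_k)\,(\Delta x)^2\, (1+o_m(1))
\]
with a remainder uniform in $k$. Summing yields the Riemann sum $\Delta x \sum_k \sqrt{\rho(x_k,\phi(x_k))\,\dot\phi(\zeta_k)}\,(1+o_m(1))$, which converges to $\int_0^1 \sqrt{\rho(x,\phi(x))\,\dot\phi(x)}\,dx = J(\phi)$. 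Choosing the parameters in the order $m$ first (large enough that $\|\dot\phi\|_\infty/m<\delta$, $|\beta|/m<\ln 2$, $e^{-|\beta|/(2m)}\ge 1-\epsilon/4$, and the Riemann sum exceeds $J(\phi)(1-\epsilon/4)$) and then $\epsilon''$ small enough to absorb the remaining slack, produces the desired lower bound $2J(\phi)(1-\epsilon)\sqrt{n}$ with probability going to $1$. The main subtle point is precisely this cascade of parameter choices together with the uniformity of the Riemann-sum remainder, both of which go through cleanly thanks to the $C^1_b$ regularity of $\phi$ and the smoothness of $\rho$ supplied by Lemma \ref{L101}.
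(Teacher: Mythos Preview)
Your proposal is correct and follows essentially the same partition-and-concatenate approach as the paper: both arguments subdivide $[0,1]$ into $K$ (or $m$) equal pieces, apply Lemma~\ref{L501} in each induced rectangle along the graph of $\phi$, concatenate the resulting increasing subsequences, and pass to the Riemann integral $J(\phi)$ via the uniform continuity of $\rho$ (Lemma~\ref{L101}) together with the $C^1_b$ regularity of $\phi$. The only differences are cosmetic---you use asymptotic $(1+o_m(1))$ notation where the paper tracks an explicit auxiliary parameter $\delta'$, and you bound box heights via $\|\dot\phi\|_\infty\Delta x$ where the paper writes $\max_i(y_i-y_{i-1})<\delta$ directly.
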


\begin{proof}
Given $\delta, \epsilon > 0$, fix an integer $K$. Let $\Delta x \coloneqq 1/K$. Let $x_i \coloneqq i \Delta x$ and $y_i \coloneqq \phi(x_i)$ for $i \in [K]$. Let $x_0 \coloneqq 0$, $y_0 \coloneqq 0$. Define the rectangles $R_i \coloneqq [x_{i-1}, x_{i}] \times [y_{i-1}, y_{i}]$ for $i \in [K]$. Since $\phi$ is in $C^1_b$, for any $0 < \delta' <1$, we can choose $K$ large enough such that
\begin{align}
&\max_i(y_{i} - y_{i-1}) < \delta, \quad e^{-\Delta x |\beta|/2} > 1 - \delta',\quad \Delta x |\beta|<\ln{2}\label{eq:L54a} \\
&\max_{i} \max_{x, y \in R_i} \max\left(\frac{\rho(x, y)}{\rho(x_i, y_i)}, \frac{\rho(x_i, y_i)}{\rho(x, y)} \right) < \frac{1}{1 - \delta'}, \label{eq:L54b}
\end{align}
and
\begin{equation}
 \sum_{i = 1}^{K} \sqrt{\rho(x_i, y_i)(y_i - y_{i -1}) \Delta x }  > (1 - \delta')J(\phi).  \label{eq:L54c}
\end{equation}
(\ref{eq:L54b}) follows from the uniform continuity of $\rho(x, y)$ on $[0, 1] \times [0, 1]$ and the fact that $\rho(x, y)$ is bounded away from 0, which is proved in Lemma \ref{L101} (a). (\ref{eq:L54c}) follows since
\begin{align*}
 &\lim_{K \to \infty}  \sum_{i = 1}^{K} \sqrt{\rho(x_i, y_i)(y_i - y_{i -1}) \Delta x } \\
 = &\lim_{K \to \infty}  \sum_{i = 1}^{K} \sqrt{\rho(x_i, y_i)\frac{y_i - y_{i -1}}{x_i - x_{i-1}}} \, \Delta x \\
 = &\, J(\phi),
\end{align*}
where the last equality follows from the definition of Riemann integral, the mean value theorem and the fact that $\phi \in C_b^1$.
Next, for any $i \in [K]$, define $\rho(R_i) \coloneqq \iint_{R_i}\rho(x, y)\,dxdy$. By (\ref{eq:L54b}), we have
\[
\frac{\rho(R_i)}{1 - \delta'} > \rho(x_i, y_i)(y_i - y_{i-1})\Delta x.
\]
Hence, for any $i \in [K]$, we have
\begin{equation}
\frac{l_{R_i}(\pi, \tau)}{2 \sqrt{n \rho(x_i, y_i)(y_i - y_{i -1}) \Delta x }} \ge
\frac{l_{R_i}(\pi, \tau)\sqrt{1 - \delta'}}{2 \sqrt{n \rho(R_i)}}.  \label{eq:L54d}
\end{equation}
By fixing the $\epsilon$ in Lemma \ref{L501} to be $2 \delta'$, we have
\begin{equation}
\lim_{n \to \infty}\p_n\left(\frac{l_{R_i}(\pi, \tau)}{\sqrt{n\rho(R_i)}} > 2e^{-\Delta x |\beta| /2 } - 2 \delta'\right) = 1. \label{eq:L54e}
\end{equation}
Moreover,
\begin{align}
 &\p_n\left(\frac{l_{R_i}(\pi, \tau)}{2 \sqrt{n \rho(x_i, y_i)(y_i - y_{i -1}) \Delta x }} > (1 - 2 \delta')\sqrt{1 - \delta'}\right)\label{eq:L54f}\\
 \ge \ &\p_n\left(\frac{l_{R_i}(\pi, \tau)}{2 \sqrt{n \rho(R_i)}} > 1 - 2 \delta'\right)\nonumber\\
 \ge \ &\p_n\left(\frac{l_{R_i}(\pi, \tau)}{\sqrt{n\rho(R_i)}} > 2e^{-\Delta x |\beta| /2 } - 2 \delta'\right).  \nonumber
\end{align}
The first inequality follows by (\ref{eq:L54d}), and the second inequality follows by (\ref{eq:L54a}), since
\[
2e^{-\Delta x |\beta| /2 } - 2 \delta' > 2(1 - \delta') - 2\delta' = 2(1 - 2 \delta').
\]
Hence, by (\ref{eq:L54e}) and (\ref{eq:L54f}), we get
\begin{equation}
\lim_{n \to \infty} \p_n\left(\frac{l_{R_i}(\pi, \tau)}{2 \sqrt{n \rho(x_i, y_i)(y_i - y_{i -1}) \Delta x }} > (1 - 2 \delta')\sqrt{1 - \delta'}\right) = 1,    \label{eq:L54g}
\end{equation}
for any $i \in [K]$.
Note that by concatenating the increasing subsequences of $\big\{\big(\frac{\pi(i)}{n}, \frac{\tau(i)}{n}\big)\big\}_{i \in [n]}$ in each $R_i$ we get a increasing subsequence in $[0, 1]\times[0, 1]$ which is wholly contained in a $\delta$ neighborhood of $\phi$. Combining (\ref{eq:L54c}) and (\ref{eq:L54g}), it follows that, with probability converging to 1 as $n \to \infty$, there exists an increasing subsequence of $\big\{\big(\frac{\pi(i)}{n}, \frac{\tau(i)}{n}\big)\big\}_{i \in [n]}$ in a $\delta$~neighborhood of $\phi$ whose length is at least
\[
\sum_{i = 1}^{K} 2 \sqrt{n} (1 - 2 \delta')\sqrt{1 - \delta'} \sqrt{\rho(x_i, y_i)(y_i - y_{i -1}) \Delta x} > 2 \sqrt{n} (1 - 2 \delta')(1 - \delta')^{\frac{3}{2}} J(\phi).
\]
The lemma follows since we can choose $\delta'$ small enough in the first place such that $(1 - 2 \delta')(1 - \delta')^{\frac{3}{2}} > 1 - \epsilon$.

\end{proof}

\begin{definition}\label{D520}
Given $K, L \in \mathbb{N}$, define
\[
\bs{B}_{KL} \coloneqq \{(b_0, b_1, \ldots, b_K) \in \mathbb{Z}^{K+1} : 0 = b_0 \le b_1 \le \cdots \le b_K = KL-1 \}.
\]
\end{definition}

\begin{definition}\label{D52}
Given $K, L \in \mathbb{N}$ and $\bs{b} = (b_0, b_1, \ldots, b_K) \in \bs{B}_{KL}$, for any $i \in [K]$, define the rectangle $R_i \coloneqq ((i-1)\Delta x, i \Delta x] \times (b_{i-1}\Delta y, (b_i + 1) \Delta y]$, where $\Delta x \coloneqq \frac{1}{K}$ and $\Delta y \coloneqq \frac{1}{KL}$. Let $M_i \coloneqq \sup_{(x, y) \in R_i} \rho(x, y)$ and $m_i \coloneqq \inf_{(x, y) \in R_i} \rho(x, y)$. Define
\[
J_{\bs{b}}^{K, L} \coloneqq \sum_{i = 1}^{K} \sqrt{M_i(b_i - b_{i-1}+1)\Delta x \Delta y}.
\]
\end{definition}

\begin{lemma}\label{L51}
\[
\varlimsup_{\substack{K \to \infty \\ L \to \infty}}\max_{\bs{b} \in \bs{B}_{KL}} J_{\bs{b}}^{K, L} \le \bar{J}
\]
where $\bar{J}$ is defined in Definition \ref{D51}.
\end{lemma}

\begin{proof}
Let $M$ be an upper bound of $\rho(x, y)$. In the context of Definition \ref{D52}, let $\phi_{\bs{b}}(x)$ be the piecewise linear function on $[0, 1]$ such that $\phi_{\bs{b}}(i \Delta x) = b_i \Delta y$, $i = 0, 1, \ldots, K$. From the two definitions above, we have
\begin{align}
J(\phi_{\bs{b}}) &= \int_0^1 \sqrt{\dot{\phi_{\bs{b}}}(x) \rho(x, \phi_{\bs{b}}(x))}\, dx \label{eq:L51a}\\
  &= \sum_{i =1}^{K} \int_{(i-1)\Delta x}^{i \Delta x} \sqrt{\dot{\phi_{\bs{b}}}(x) \rho(x, \phi_{\bs{b}}(x))}\, dx \nonumber\\
  &= \sum_{i = 1}^{K} \int_{(i-1)\Delta x}^{i \Delta x} \sqrt{\frac{(b_i - b_{i-1})\Delta y}{\Delta x}\cdot \rho(x, \phi_{\bs{b}}(x))}\, dx \nonumber\\
  &\ge \sum_{i = 1}^{K} \int_{(i-1)\Delta x}^{i \Delta x} \sqrt{\frac{(b_i - b_{i-1})\Delta y}{\Delta x}\cdot m_i}\, dx \nonumber\\
  &= \sum_{i = 1}^{K} \sqrt{m_i (b_i - b_{i - 1})\Delta x \Delta y} \nonumber\\
  &\ge \sum_{i = 1}^{K}\sqrt{M_i (b_i - b_{i - 1})\Delta x \Delta y} - \sum_{i = 1}^{K}\sqrt{(M_i-m_i) (b_i - b_{i - 1})\Delta x \Delta y}, \nonumber
\end{align}
where the last inequality follows since $\sqrt{a} + \sqrt{b} \ge \sqrt{a+b}$ for any $a, b \ge 0$. Moreover,
\begin{align}
  &\sum_{i = 1}^{K}\sqrt{M_i (b_i - b_{i - 1})\Delta x \Delta y} \label{eq:L51b}\\
  =\,& J_{\bs{b}}^{K, L} - \sum_{i = 1}^{K}\big(\sqrt{M_i (b_i - b_{i - 1}+1)\Delta x \Delta y} - \sqrt{M_i (b_i - b_{i - 1})\Delta x \Delta y}\,\big) \nonumber\\
  =\, & J_{\bs{b}}^{K, L} - \sum_{i = 1}^{K}\frac{M_i \Delta x \Delta y}{\sqrt{M_i (b_i - b_{i - 1}+1)\Delta x \Delta y} + \sqrt{M_i (b_i - b_{i - 1})\Delta x \Delta y}} \nonumber\\
  \ge\, & J_{\bs{b}}^{K, L} - \sum_{i = 1}^{K}\frac{M_i \Delta x \Delta y}{\sqrt{M_i (b_i - b_{i - 1}+1)\Delta x \Delta y}} \nonumber\\
  \ge\, & J_{\bs{b}}^{K, L} - \sum_{i = 1}^{K}\frac{M_i \Delta x \Delta y}{\sqrt{M_i \Delta x \Delta y}} \nonumber\\
  \ge\, & J_{\bs{b}}^{K, L} - \sqrt{M} \sum_{i = 1}^{K}\sqrt{\Delta x \Delta y} \nonumber\\
  =\, & J_{\bs{b}}^{K, L} - \sqrt{\frac{M}{L}}.  \nonumber
\end{align}
Next, define
\begin{align*}
D_1(\bs{b}) &\coloneqq \{i \in [K] : (b_i - b_{i-1}+1)\Delta y \le \sqrt[3]{\Delta x}\},\\
D_2(\bs{b}) &\coloneqq \{i \in [K] : (b_i - b_{i-1}+1)\Delta y > \sqrt[3]{\Delta x}\}.
\end{align*}
For $i \in D_1(\bs{b})$, the height of $R_i$ is no greater than $\sqrt[3]{\Delta x}$, and for $i \in D_2(\bs{b})$, the height of $R_i$ is greater than $\sqrt[3]{\Delta x}$. To bound the cardinality of $D_2(\bs{b})$, we have
\begin{align}
|D_2(\bs{b})|\sqrt[3]{\Delta x}\ &\le \sum_{i \in D_2(\bs{b})} (b_i - b_{i-1} +1) \Delta y  \label{eq:L51c}\\
 &\le \sum_{i \in D_2(\bs{b})} (b_i - b_{i-1}) \Delta y + |D_2(\bs{b})|\Delta y \nonumber\\
 &\le \sum_{i=1}^{K} (b_i - b_{i-1}) \Delta y + K \Delta y \nonumber\\
 &\le 1 + \frac{1}{L} \nonumber\\
 &\le 2.  \nonumber
\end{align}
Given $\epsilon > 0$, by the uniform continuity of $\rho(x, y)$ on $[0, 1]\times[0, 1]$, there exists $K_0 > 0$ such that, for any $K > K_0$ and any $i \in D_1(\bs{b})$, we have $M_i - m_i < \epsilon^2$. We can also choose $K_0$ sufficiently large such that, for any $K > K_0$,
\begin{equation}
2\sqrt{M}(\Delta x)^{\frac{1}{6}} < \epsilon .  \label{eq:L51d}
\end{equation}
Thus, for any $K > K_0$, we have
\begin{align}
  &\sum_{i = 1}^{K}\sqrt{(M_i-m_i) (b_i - b_{i - 1})\Delta x \Delta y} \label{eq:L51e}\\
\le\, &\sum_{i \in D_1(\bs{b})}\sqrt{\epsilon^2 (b_i - b_{i - 1})\Delta x \Delta y}\,\, + \sum_{i \in D_2(\bs{b})}\sqrt{M(b_i - b_{i - 1})\Delta x \Delta y}  \nonumber\\
\le\, &\epsilon \sum_{i=1}^{K}\sqrt{(b_i - b_{i - 1})\Delta x \Delta y} \,\, + \sum_{i \in D_2(\bs{b})}\sqrt{M \Delta x}  \nonumber\\
\le\, &\epsilon \sqrt{\textstyle\sum_{i=1}^{K}\Delta x}\sqrt{\textstyle\sum_{i=1}^{K}(b_i - b_{i - 1}) \Delta y} + 2\sqrt{M}(\Delta x)^{\frac{1}{6}} \nonumber\\
 <\, &\epsilon + \epsilon, \nonumber
\end{align}
where the second to last inequality follows by Cauchy-Schwarz inequality and (\ref{eq:L51c}).
Let $L_0 \coloneqq \left\lceil\frac{M}{\epsilon^2}\right\rceil$. By combining (\ref{eq:L51a}), (\ref{eq:L51b}) and (\ref{eq:L51e}), we get, for any $K > K_0$, $L>L_0$ and any $\bs{b}$,
\[
J_{\bs{b}}^{K, L} \le J(\phi_{\bs{b}}) + \sqrt{\frac{M}{L}} \le J(\phi_{\bs{b}}) +  3\epsilon \le \bar{J} +  3\epsilon,
\]
where the last inequality follows from the fact that $\phi_{\bs{b}} \in B_{\nearrow}$ and Definition \ref{D51}.
\end{proof}

\begin{definition}\label{D53}
In the context of Definition \ref{D52}, we call a sequence of points $(z_1, \ldots, z_m)$ with $z_i = (x_i, y_i)$ a $\bs{b}$-\emph{increasing sequence} if the following two conditions are satisfied.
\begin{itemize}
\item[(a)] $(z_1, \ldots, z_m)$ is an increasing sequence, that is $x_i < x_{i+1}$ and $y_i < y_{i+1}$ for all $i \in [m-1]$.
\item[(b)] Every point in the sequence is contained in some rectangle $R_j$ with $j\in[K]$. In other words, $(j-1)\Delta x < x_i \le j \Delta x$ implies $b_{j-1}\Delta y < y_i \le (b_j +1) \Delta y$.
\end{itemize}
Given a collection of points $\bs{z} = \{z_i\}_{i \in [n]}$, let $\lis_{\bs{b}}(\bs{z})$ denote the length of the longest $\bs{b}$-increasing subsequence of $\bs{z}$. That is
\begin{multline*}
\lis_{\bs{b}}(\bs{z}) \coloneqq \max\{m: \exists (i_1, i_2, \ldots, i_m)\\
 \qquad\qquad\qquad\quad\text{ such that } (z_{i_1}, z_{i_2}, \ldots, z_{i_m}) \text{ is a $\bs{b}$-increasing sequence}\}.
\end{multline*}
Note that we do not require $i_j < i_{j+1}$ above.
\end{definition}

\begin{lemma}\label{L52}
Under the same conditions as in Lemma \ref{L100}, for any $\delta > 0$, there exist $K_0$, $L_0$ such that, for any $K > K_0$, $L > L_0$ and any $\bs{b} = (b_0, b_1, \ldots, b_K) \in \bs{B}_{KL}$.
\begin{equation}
\lim_{n \to \infty}\p_n\left(\lis_{\bs{b}}(\bs{z}(\pi, \tau))> 2\sqrt{n}(\bar{J} + \delta)\right) = 0,  \label{eq:L52a}
\end{equation}
where $\bs{z}(\pi, \tau) \coloneqq \left\{\left(\frac{\pi(i)}{n}, \frac{\tau(i)}{n}\right)\right\}_{i \in [n]}$.
\end{lemma}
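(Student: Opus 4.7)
The plan is to deterministically dominate $\lis_{\bs{b}}(\bs{z}(\pi,\tau))$ by the sum of the LIS lengths in the individual strips $R_1,\ldots,R_K$, use Lemma~\ref{L501} to control each $l_{R_i}(\pi,\tau)$, and then recognize the resulting sum as $J_{\bs{b}}^{K,L}$ so that Lemma~\ref{L51} closes the loop.

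First, I would record the pointwise inequality
\[
\lis_{\bs{b}}(\bs{z}(\pi,\tau)) \le \sum_{i=1}^{K} l_{R_i}(\pi,\tau).
\]
This holds because, by condition (b) of Definition~\ref{D53}, every point of a $\bs{b}$-increasing sequence lies in the rectangle $R_j$ determined by its $x$-coordinate, and restricting the sequence to the points in any fixed $R_j$ produces an increasing subsequence in $R_j$. Summing the sizes of these restrictions over $j$ yields the total length.

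Second, I would calibrate parameters. Given $\delta>0$, pick $\eta>0$ small enough that $(2+\eta)(\bar{J}+\eta) < 2(\bar{J}+\delta)$. By Lemma~\ref{L51}, choose $K_0, L_0$ so that $\max_{\bs{b}} J_{\bs{b}}^{K,L} \le \bar{J}+\eta$ whenever $K>K_0$, $L>L_0$; after enlarging $K_0$ if necessary, also ensure $\Delta x\,|\beta|<\ln 2$ and $2e^{\Delta x|\beta|/2}+\eta/2 \le 2+\eta$. Now fix $\bs{b}$. For each $i\in[K]$, Lemma~\ref{L101}(a) gives $\rho(R_i) \ge e^{-|\beta|-|\gamma|}|R_i|>0$, so Lemma~\ref{L501} applies and yields
\[
\lim_{n\to\infty}\p_n\!\left( l_{R_i}(\pi,\tau) < \bigl(2 e^{\Delta x|\beta|/2} + \eta/2\bigr)\sqrt{n\rho(R_i)}\right) = 1.
\]
Since $K$ is fixed (independent of $n$), a union bound over $i\in[K]$ shows that with $\p_n$-probability tending to $1$ the above upper bound holds simultaneously for all $i\in[K]$.

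Finally, on this high-probability event, using $\rho(R_i)\le M_i|R_i|=M_i(b_i-b_{i-1}+1)\Delta x\Delta y$ and the definition of $J_{\bs{b}}^{K,L}$,
\[
\lis_{\bs{b}}(\bs{z}(\pi,\tau))\le\sum_{i=1}^K l_{R_i}(\pi,\tau)\le(2+\eta)\sqrt{n}\sum_{i=1}^K\!\sqrt{M_i(b_i-b_{i-1}+1)\Delta x\Delta y}=(2+\eta)\sqrt{n}\,J_{\bs{b}}^{K,L}\le(2+\eta)(\bar{J}+\eta)\sqrt{n}<2(\bar{J}+\delta)\sqrt{n},
\]
which gives (\ref{eq:L52a}). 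The only real subtlety is the parameter juggling: the exponential factor $e^{\Delta x|\beta|/2}$ from Lemma~\ref{L501} and the near-supremum error from Lemma~\ref{L51} both need to be absorbed into the $\delta$-slack, and the hypothesis $\Delta x|\beta|<\ln 2$ of Lemma~\ref{L501} must be enforced when choosing $K_0$. All of these are routine once $K_0$ and $L_0$ are taken large enough, so the argument rests essentially on the clean decomposition into strips and the identification of $\sum_i\sqrt{M_i|R_i|}$ with $J_{\bs{b}}^{K,L}$.
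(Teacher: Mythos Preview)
Your proposal is correct and follows essentially the same route as the paper: the deterministic bound $\lis_{\bs{b}}\le\sum_i l_{R_i}$, Lemma~\ref{L501} on each $R_i$, the inequality $\rho(R_i)\le M_i(b_i-b_{i-1}+1)\Delta x\Delta y$, and Lemma~\ref{L51} to close. The only cosmetic difference is in the error budgeting---the paper allots an additive slack $\delta\Delta x/2$ per rectangle (summing to $\delta/2$) whereas you use a single multiplicative factor $(2+\eta)$---but the logic and the thresholds on $K_0,L_0$ are the same.
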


\begin{proof}
Given $\delta > 0$, by Lemma \ref{L51}, there exist $K_1, L_1 > 0$ such that, for any $K > K_1, L >L_1$ and any $\bs{b} = (b_0, b_1, \ldots, b_K) \in \bs{B}_{KL}$, we have
\[
J_{\bs{b}}^{K, L} < \bar{J} + \frac{\delta}{2}.
\]
Then, we get
\[
\p_n\left(\lis_{\bs{b}}(\bs{z}(\pi, \tau))> 2\sqrt{n}(\bar{J} + \delta)\right) \le \p_n\big(\lis_{\bs{b}}(\bs{z}(\pi, \tau))> 2\sqrt{n}(J_{\bs{b}}^{K, L} + \delta/2)\big).
\]
Hence, to show (\ref{eq:L52a}), it suffices to show that there exists $K_2$, $L_2$ such that, for any $K > K_2$, $L > L_2$ and any $\bs{b}$,
\begin{equation}
\lim_{n \to \infty}\p_n\left(\lis_{\bs{b}}(\bs{z}(\pi, \tau))> 2\sqrt{n}(J_{\bs{b}}^{K, L} + \delta/2)\right) = 0. \label{eq:L52b}
\end{equation}
Given $K, L > 0$, whose values are to be determined, and any $\bs{b} \in \bs{B}_{KL}$, we inherit all the notations introduced in Definition \ref{D52}. Let $l_{R_i}(\pi, \tau)$ denote the length of the longest increasing subsequence of $\bs{z}(\pi, \tau)$ wholly contained in the rectangle $R_i$. For any $i \in [K]$, define
\[
E_i(\bs{b}) \coloneqq \big\{(\pi, \tau): l_{R_i}(\pi, \tau) \ge 2\sqrt{n}\big(\sqrt{M_i(b_i - b_{i-1}+1)\Delta x \Delta y} + \delta \Delta x /2 \big)\big\}.
\]
Since $\lis_{\bs{b}}(\bs{z}(\pi, \tau)) \le \sum_{i = 1}^{K} l_{R_i}(\pi, \tau)$, we get
\[
\left\{\lis_{\bs{b}}(\bs{z}(\pi, \tau))> 2\sqrt{n}(J_{\bs{b}}^{K, L} + \delta/2)\right\} \subset \bigcup_{i\in [K]}E_i(\bs{b}).
\]
Hence, to show (\ref{eq:L52b}), it suffices to show
\begin{equation}
\lim_{n \to \infty}\p_n(E_i(\bs{b})) = 0,  \qquad \forall i \in [K] .  \label{eq:L52c}
\end{equation}
Let $M \coloneqq \sup_{0 \le x, y \le 1}\rho(x, y)$. Since $e^{\Delta x |\beta|/2} - 1 = \Theta(\Delta x)$, there exists $K_2 > 0$ such that, for any $K > K_2$, we have
\begin{equation}
e^{\Delta x |\beta|/2} < 1 + \frac{\delta \sqrt{\Delta x}}{2 \sqrt{M}} \quad \text{ and } \quad \Delta x |\beta| < \ln{2}.  \label{eq:L52d}
\end{equation}
Moreover, for any $i \in [K]$,
\begin{align}
 &\p_n(E_i(\bs{b})) \label{eq:L52e}\\
 \le\, &\textstyle\p_n\Big(l_{R_i}(\pi, \tau) \ge 2\sqrt{n}\sqrt{M_i(b_i - b_{i-1}+1)\Delta x \Delta y}\big(1 + \frac{\delta \Delta x}{2\sqrt{M\Delta x}} \big)\Big)  \nonumber\\
 \le\, &\textstyle\p_n\Big(l_{R_i}(\pi, \tau) \ge 2\sqrt{n \rho(R_i)}\big(1 + \frac{\delta \sqrt{\Delta x}}{2 \sqrt{M}} \big)\Big),  \nonumber
\end{align}
The first inequality follows since $(b_i - b_{i-1}+1)\Delta y \le 1$ and $M_i \le M$. The second inequality follows since
\[
M_i(b_i - b_{i-1}+1)\Delta x \Delta y \ge \int_{R_i} \rho(x, y)\,dxdy = \rho(R_i).
\]
Hence, combining (\ref{eq:L52d}), (\ref{eq:L52e}) and Lemma \ref{L501}, we get, for any $K > K_2$, $L > 0$ and any $\bs{b}$,
\[
\lim_{n \to \infty}\p_n(E_i(\bs{b})) = 0,  \qquad \forall i \in [K].
\]
Thus, (\ref{eq:L52c}) as well as the lemma follow.
\end{proof}


\begin{proof}[Proof of Theorem \ref{M3}]
By Proposition \ref{P3}, if $\pi \sim \m$, $\pi^{-1}$ has the same distribution $\m$. Hence, if $(\pi, \tau) \sim \m \times \mu_{n, q'}$, $(\pi^{-1}, \tau^{-1})$ has the same distribution $\m \times \mu_{n, q'}$. Note that $\lis(\pi, \tau) = \lis(\bs{z}(\pi, \tau))$. Thus, by Corollary \ref{C33}, to prove Theorem \ref{M3}, it suffices to show
\begin{equation}
\lim_{n \to \infty} \p_n \left(\left|\,\frac{\lis(\bs{z}(\pi, \tau))}{\sqrt{n}} - 2\bar{J} \, \right| < \epsilon \right) = 1, \label{eq:M3a}
\end{equation}
for any $\epsilon > 0$.
By Lemma \ref{L54} and the definition of $\bar{J}$, we have
\begin{equation}
\lim_{n \to \infty} \p_n \left(\frac{\lis(\bs{z}(\pi, \tau))}{\sqrt{n}} > 2\bar{J} - \epsilon \right) = 1. \label{eq:M3b}
\end{equation}
To show the upper bound in (\ref{eq:M3a}), note that, for any $K, L > 0$ and any increasing sequence of points $\{(x_j, y_j)\}_{j \in [n]}$ with $0 < x_j, y_j \le 1$, there exists a choice of $\bs{b}' = (b'_0, b'_1, \ldots, b'_K)$ such that $\{(x_j, y_j)\}_{j \in [n]}$ is a $\bs{b}'$ - increasing sequence. Specifically, we can define $\bs{b}'$ as follows. Let $\Delta x \coloneqq \frac{1}{K}, \Delta y \coloneqq \frac{1}{KL}$.
\begin{itemize}
\item Define $b'_0 \coloneqq 0$, $b'_K \coloneqq KL - 1$.
\item For $i \in [K-1]$, define $b'_i \coloneqq \lfloor\max{\{y_j : (i-1)\Delta x < x_j \le i \Delta x \}}\cdot KL\rfloor$.
\end{itemize}
It can be easily verified that with $\bs{b}'$ thus defined, every point $(x_j, y_j)$ is in some rectangle $R_i$, where $R_i$ is defined in Definition \ref{D52}. Hence, we get
\begin{align}
 &\p_n \left(\frac{\lis(\bs{z}(\pi, \tau))}{\sqrt{n}} > 2\bar{J} + \epsilon \right) \label{eq:M3c}\\
=\, &\p_n \left(\displaystyle\max_{\bs{b} \in \bs{B}_{KL}}{\big(\lis_{\bs{b}}(\bs{z}(\pi, \tau))\big)} > \sqrt{n}\big(2\bar{J} + \epsilon\big) \right) \nonumber\\
\le\, &\sum_{\bs{b} \in \bs{B}_{KL}}\p_n\left(\lis_{\bs{b}}(\bs{z}(\pi, \tau)) > \sqrt{n}\big(2\bar{J} + \epsilon\big) \right). \nonumber
\end{align}
By Lemma \ref{L52}, we can choose $K, L$ sufficiently large such that, for any $\bs{b} \in \bs{B}_{KL}$,
\[
\lim_{n \to \infty}\p_n\left(\lis_{\bs{b}}(\bs{z}(\pi, \tau)) > \sqrt{n}\big(2\bar{J} + \epsilon\big) \right) = 0.
\]
Hence, by (\ref{eq:M3c}) and the fact that the number of different choices of $\bs{b}$ is bounded above by $(KL)^K$, we have
\begin{equation}
\lim_{n \to \infty}\p_n\left(\frac{\lis(\bs{z}(\pi, \tau))}{\sqrt{n}} > 2\bar{J} + \epsilon \right) = 0. \label{eq:M3d}
\end{equation}
And (\ref{eq:M3a}) follows from (\ref{eq:M3b}) and (\ref{eq:M3d})
\end{proof}

\subsection{Solving $\bar{J}$ when $\beta = \gamma$}

The following lemma let us solve for the supremum $\bar{J}$ when the underlying density $\rho(x, y)$ satisfies $\rho\left(\frac{x+y}{2}, \frac{x+y}{2}\right) \ge \rho(x, y)$.
\begin{lemma}\label{L91}
Given a density $\rho(x, y)$ on $[0, 1]\times[0, 1]$ such that $\rho(x, y)$ is $C_b^1$ and $c < \rho(x, y) < C$ for some $C, c >0$, if $\rho(x, y) \le \rho\left(\frac{x+y}{2}, \frac{x+y}{2}\right)$ for any $0 \le x, y \le 1$, then we have
\[
\bar{J} = \int_0^1 \sqrt{\rho(x, x)} \, dx,
\]
i.\,e.\,the supremum of $J(\phi)$ on $B_{\nearrow}$ is attained for $\phi(x) = x$.
\end{lemma}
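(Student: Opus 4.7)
The plan is to prove that $J(\phi) \le \int_0^1 \sqrt{\rho(x,x)}\, dx$ for every $\phi \in B^1_{\nearrow}$, which combined with the trivial lower bound $J(\mathrm{id}) = \int_0^1 \sqrt{\rho(x,x)}\, dx$ (attained at $\phi(x) = x$, which lies in $B^1_{\nearrow}$) gives the result. Recall from the remark after Definition \ref{D51} that $\sup_{B_{\nearrow}} J = \sup_{B^1_{\nearrow}} J$, so it suffices to work over $B^1_{\nearrow}$.

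The key idea combines two pointwise inequalities. Fix $\phi \in B^1_{\nearrow}$ and set $u(x) \coloneqq \tfrac{x+\phi(x)}{2}$. Since $\dot{\phi}(x) \ge 0$, $u$ is strictly increasing with $\dot{u}(x) = \tfrac{1+\dot{\phi}(x)}{2} \ge \tfrac{1}{2}$, and $u(0) = 0$, $u(1) = 1$, so $u : [0,1] \to [0,1]$ is a $C^1$ diffeomorphism. By AM-GM,
\[
\sqrt{\dot{\phi}(x)} \le \frac{1+\dot{\phi}(x)}{2} = \dot{u}(x),
\]
and by the hypothesis $\rho(x,y) \le \rho\!\left(\tfrac{x+y}{2}, \tfrac{x+y}{2}\right)$ applied at $y=\phi(x)$,
\[
\rho(x, \phi(x)) \le \rho(u(x), u(x)).
\]
Multiplying and taking square roots gives
\[
\sqrt{\dot{\phi}(x)\,\rho(x,\phi(x))} \;\le\; \dot{u}(x)\,\sqrt{\rho(u(x), u(x))}.
\]

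Integrating this pointwise inequality over $[0,1]$ and performing the change of variables $u = u(x)$ (valid because $u$ is a $C^1$ diffeomorphism of $[0,1]$), we obtain
\[
J(\phi) \;=\; \int_0^1 \sqrt{\dot{\phi}(x)\,\rho(x,\phi(x))}\, dx
\;\le\; \int_0^1 \sqrt{\rho(u(x), u(x))}\,\dot{u}(x)\, dx
\;=\; \int_0^1 \sqrt{\rho(u,u)}\, du,
\]
which is exactly $\int_0^1 \sqrt{\rho(x,x)}\, dx$. This is the desired upper bound. Since $\phi(x) = x$ achieves this value, the supremum is attained and equals $\int_0^1 \sqrt{\rho(x,x)}\, dx$.

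There is no real obstacle here beyond making sure the substitution is justified; the only mildly delicate point is that we need $u$ to be a genuine bijection for the change of variables, which is guaranteed by $\dot{u} \ge 1/2 > 0$. The two-sided boundary conditions $\phi(0) = 0$, $\phi(1) = 1$ are essential to ensure that $u$ maps $[0,1]$ onto $[0,1]$, so that after the substitution we recover the full integral $\int_0^1 \sqrt{\rho(x,x)}\, dx$ rather than an integral over a subinterval.
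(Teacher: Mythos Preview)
Your proof is correct and follows essentially the same approach as the paper: both reparametrize via $t = u(x) = (x+\phi(x))/2$, use the hypothesis to bound $\rho(x,\phi(x)) \le \rho(t,t)$, and establish $\sqrt{\dot\phi(x)} \le \dot u(x)$ before changing variables. The only cosmetic difference is that you obtain $\sqrt{\dot\phi} \le (1+\dot\phi)/2$ directly from AM--GM, whereas the paper derives the equivalent inequality $\dot\phi(x)\,(dx/dt)^2 = 2\,dx/dt - (dx/dt)^2 \le 1$ from the relation $2\,dt = (1+\dot\phi)\,dx$.
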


\begin{proof}
By the remark following Definition \ref{D51}, it suffices to show that, for any $\phi \in B^1_{\nearrow}$, we have
\begin{equation}
J(\phi) \le \int_0^1 \sqrt{\rho(x, x)} \, dx.  \label{eq:L91a}
\end{equation}
Define $g_{\phi}(x) \coloneqq x + \phi(x)$. Since $\dot{\phi}(x) \ge 0$, we have $\dot{g_{\phi}}(x) \ge 1$. Next, we reparameterize $\phi(x)$ as follows,
\begin{equation}
t\coloneqq \frac{g_{\phi}(x)}{2} = \frac{x + \phi(x)}{2}. \label{eq:L91b}
\end{equation}
Thus, we have $x = g_{\phi}^{-1}(2t)$ and $\phi(x) = 2t - x = 2t - g_{\phi}^{-1}(2t)$ where $t \in [0, 1]$. Moreover, since $g_{\phi}(x)$ is strictly increasing, $x$ is strictly increasing as a function of $t$. Hence we have
\begin{equation}
\rho\big(x, \phi(x)\big) = \rho\big(g_{\phi}^{-1}(2t), 2t - g_{\phi}^{-1}(2t)\big) \le \rho(t, t), \label{eq:L91c}
\end{equation}
where the last inequality follows since $\rho(x, y) \le \rho\left(\frac{x+y}{2}, \frac{x+y}{2}\right)$.
Next, by taking derivative with respect to $t$ on both sides of (\ref{eq:L91b}), we have
\begin{equation}
1 = \frac{1}{2}\left(\frac{d{x}}{d{t}} + \dot{\phi}(x)\, \frac{d{x}}{d{t}}\right). \label{eq:L91d}
\end{equation}
By multiplying $2 \, \frac{d{x}}{d{t}}$ on both sides of (\ref{eq:L91d}), we get
\begin{equation}
\dot{\phi}(x)\,\left(\frac{d{x}}{d{t}}\right)^2 = 2\,\frac{d{x}}{d{t}} - \left(\frac{d{x}}{d{t}}\right)^2 \le 1. \label{eq:L91e}
\end{equation}
Hence, by (\ref{eq:L91c}) and (\ref{eq:L91e}), we have
\begin{align*}
J(\phi) &= \int_0^1 \sqrt{\dot{\phi}(x)\, \rho(x, \phi(x))} \, dx\\
        &\le \int_0^1 \sqrt{\dot{\phi}(x)\, \rho(t, t)} \cdot \frac{d{x}}{d{t}} \,dt\\
        & = \int_0^1 \sqrt{\rho(t, t)\, \dot{\phi}(x)\left(\frac{d{x}}{d{t}}\right)^2} \,dt\\
        &\le \int_0^1 \sqrt{\rho(t, t)}\, dt.
\end{align*}
Therefore, $\bar{J}$ is attained for $\phi(x) = x$.
\end{proof}

\begin{proof}[Proof of Corollary \ref{C92}]
Note that in the special case where $\beta = \gamma$, the density $\rho(x, y)$ in (\ref{eq:L100z}) is given by
\begin{equation}
\rho(x, y) \coloneqq \int_0^1 u(x, t, \beta)\cdot u(t, y, \beta)\, dt .\label{eq:C92a}
\end{equation}
In this case, we will show that $\rho(x, y) \le \rho\left(\frac{x+y}{2}, \frac{x+y}{2}\right)$ for any $0 \le x, y \le 1$. Hence, by Lemma \ref{L101} and Lemma \ref{L91}, $\bar{J}$ defined in Theorem \ref{M3} is attained when $\phi(x) = x$. In fact, by direct calculation, it can be shown that
\begin{equation}
u(x, t, \beta)\cdot u(t, y, \beta) \le u\left(\frac{x+y}{2}, t, \beta\right)\cdot u\left(t, \frac{x+y}{2}, \beta\right), \label{eq:C92b}
\end{equation}
for any $0 \le x, y, t \le 1$.
By the definition of $u(x, y, \beta)$, we have
\begin{align}
  &u(x, t, \beta)\cdot u(t, y, \beta) \label{eq:C92c}\\
=\, &\frac{(\beta/2) \sinh(\beta/2)}{\left(e^{\beta/4} \cosh(\beta[x-t]/2)-e^{-\beta/4}\cosh(\beta[x+t-1]/2)\right)^2} \nonumber\\
  &\qquad\qquad \times\frac{(\beta/2) \sinh(\beta/2)}{\left(e^{\beta/4} \cosh(\beta[t-y]/2)-e^{-\beta/4}\cosh(\beta[t+y-1]/2)\right)^2}\nonumber\\
=\, &\frac{\beta (e^{\beta} - 1)}{\left(2e^{\beta/2} \cosh(\beta[x-t]/2)-2\cosh(\beta[x+t-1]/2)\right)^2}\nonumber\\
  &\qquad\qquad \times\frac{\beta (e^{\beta} - 1)}{\left(2e^{\beta/2} \cosh(\beta[t-y]/2)-2\cosh(\beta[t+y-1]/2)\right)^2}. \nonumber
\end{align}
Considering the term inside the square of the denominator, by using the hyperbolic trigonometric identities,
\begin{align*}
\cosh(x)\cosh(y) &= \big(\cosh(x+y) + \cosh(x-y) \big)/2,   \\
\cosh(x + y) &= \cosh(x)\cosh(y) + \sinh(x)\sinh(y),  \\
\cosh(x - y) &= \cosh(x)\cosh(y) - \sinh(x)\sinh(y),
\end{align*}
we get
\begin{align}
  &\big(2e^{\beta/2} \cosh(\beta[x-t]/2)-2\cosh(\beta[x+t-1]/2)\big) \label{eq:C92cc}\\
   &\qquad\qquad \times\big(2e^{\beta/2} \cosh(\beta[t-y]/2)-2\cosh(\beta[t+y-1]/2)\big) \nonumber\\
=\, &2e^{\beta}\big(\cosh(\beta[x-y]/2) + \cosh(\beta[x+y-2t]/2)\big)  \nonumber\\
   &\qquad - 2e^{\beta/2}\big(\cosh(\beta[x+y-1]/2) + \cosh(\beta[x-y-2t+1]/2)\big)  \nonumber\\
   &\qquad - 2e^{\beta/2}\big(\cosh(\beta[x-y+2t-1]/2) + \cosh(\beta[x+y-1]/2)\big)  \nonumber\\
   &\qquad + 2 \big(\cosh(\beta[x+y+2t-2]/2) + \cosh(\beta[x-y]/2)\big) \nonumber\\
=\, & S^-_t + S^+_t, \nonumber
\end{align}
where $S^-_t$ denotes the sum of those terms in the above equation containing the term $x-y$ and $S^+_t$ denotes the sum of those which contain the term $x+y$. After further simplification using the identities above, we have
\begin{equation}
S^-_t = 2\cosh(\beta[x-y]/2)\big(e^{\beta} - 2e^{\beta/2}\cosh(\beta[2t-1]/2) +1\big). \label{eq:C92d}
\end{equation}
It is easily seen that the minimum of $e^{\beta} - 2e^{\beta/2}\cosh(\beta[2t-1]/2) +1$ for $0 \le t \le 1$ is attained when $t = 0, 1$, and the minimum is 0. Hence, for any $t \in [0, 1]$, $S^-_t$ is minimized when $x = y$. Thus to prove (\ref{eq:C92b}), it suffices to show that $S^+_t \ge 0$, since $S^-_t + S^+_t$ is the term inside the square of the denominator of (\ref{eq:C92c}). After simplification, we have
\begin{align}
S^+_t &= 2e^{\beta}\Big( \cosh(\beta[x+y-1]/2)\cosh(\beta[2t-1]/2)  \label{eq:C92e}\\
      &\qquad\qquad\qquad - \sinh(\beta[x+y-1]/2)\sinh(\beta[2t-1]/2)\Big) \nonumber\\
      &\qquad -4e^{\beta/2}\cosh(\beta[x+y-1]/2) \nonumber\\
      &\qquad + 2\Big( \cosh(\beta[x+y-1]/2)\cosh(\beta[2t-1]/2)  \nonumber\\
      &\qquad\qquad\qquad + \sinh(\beta[x+y-1]/2)\sinh(\beta[2t-1]/2)\Big). \nonumber
\end{align}
Next, we make change of variables. Define $r \coloneqq e^{\beta(x+y-1)/2}$, $s \coloneqq e^{\beta(2t-1)/2}$. Then, from (\ref{eq:C92e}), we have
\begin{align}
S^+_t &= \frac{e^{\beta}}{2}\Big(\Big(r + \frac{1}{r}\Big)\Big(s +\frac{1}{s}\Big) - \Big(r - \frac{1}{r}\Big)\Big(s - \frac{1}{s}\Big)\Big)
          - 2e^{\beta/2}\Big(r + \frac{1}{r}\Big) \nonumber\\
      &\qquad\qquad + \frac{1}{2}\Big(\Big(r + \frac{1}{r}\Big)\Big(s +\frac{1}{s}\Big) + \Big(r - \frac{1}{r}\Big)\Big(s - \frac{1}{s}\Big)\Big) \nonumber\\
      &= e^{\beta}\Big(\frac{r}{s} + \frac{s}{r}\Big) - 2e^{\beta/2}\Big(r + \frac{1}{r}\Big) + \Big(rs + \frac{1}{rs}\Big) \label{eq:C92f}\\
      &= \Big(\frac{e^{\beta}r}{s} + rs - 2e^{\beta/2}r\Big) + \Big(\frac{e^{\beta}s}{r} + \frac{1}{rs} - \frac{2e^{\beta/2}}{r}\Big)\nonumber\\
      &\ge 0, \nonumber
\end{align}
where the last inequality follows since $ x + y \ge 2\sqrt{xy}$ for any $x, y\ge 0$.
We complete the proof of Corollary \ref{C92} by showing:
\begin{equation}
\int_0^1 u(x, t, \beta)\cdot u(t, x, \beta)\,dt = \frac{\beta \big(\cosh(\beta/2) + 2 \cosh\big(\beta[2x - 1]/2\big)\big)}{6 \sinh{(\beta/2)}}, \label{eq:C92g}
\end{equation}
for $0 \le x \le 1$.

By the same change of variables as above, since $y = x$, let $r \coloneqq e^{\beta(2x-1)/2}$, $s \coloneqq e^{\beta(2t-1)/2}$. Then, we have
\begin{equation}
\frac{dt}{ds} = \frac{1}{\frac{ds}{dt}} = \frac{1}{s\beta}. \label{eq:C92h}
\end{equation}
By (\ref{eq:C92d}), we have,
\begin{equation}
S^-_t = 2\left(e^{\beta} - e^{\beta/2}\Big(s + \frac{1}{s}\Big) + 1\right). \label{eq:C92i}
\end{equation}
Then, by (\ref{eq:C92f}) and (\ref{eq:C92i}), it can be easily verified that
\begin{equation}
rs\left(S^+_t + S^-_t\right) = \left(e^{\beta/2}(r + s) - (rs +1)\right)^2 . \label{eq:C92j}
\end{equation}
Hence, we have
\begin{align}
   &\int_0^1 u(x, t, \beta)\cdot u(t, x, \beta)\,dt \label{eq:C92k}\\
=\,&\int_{e^{-\beta/2}}^{e^{\beta/2}} \frac{\beta^2(e^{\beta}-1)^2}{\left(S^+_t + S^-_t\right)^2}\frac{1}{s\beta}\,ds \nonumber\\
=\,&\int_{e^{-\beta/2}}^{e^{\beta/2}} \frac{\beta(e^{\beta}-1)^2r^2s}{\left(rs\left(S^+_t + S^-_t\right)\right)^2}\,ds \nonumber\\
=\,&\int_{e^{-\beta/2}}^{e^{\beta/2}} \frac{\beta(e^{\beta}-1)^2r^2s}{\left(e^{\beta/2}(r + s) - (rs +1)\right)^4}\,ds \nonumber\\
=\,&\beta(e^{\beta}-1)^2r^2 \int_{e^{-\beta/2}}^{e^{\beta/2}} \frac{s}{\left((e^{\beta/2}-r)s + e^{\beta/2}r - 1\right)^4}\,ds \nonumber \\
=\, &\beta(e^{\beta} - 1)^2 e^{\beta(2x-1)} \int_{e^{-\beta/2}}^{e^{\beta/2}} \frac{s}{(e^{\beta/2}(1 - e^{\beta(x-1)})s + e^{\beta x} -1)^4}\,ds.   \nonumber
\end{align}
The first equality above follows from (\ref{eq:C92c}), (\ref{eq:C92cc}), (\ref{eq:C92h}) and change of variables. The third equality follows from (\ref{eq:C92j}).
Then we make another change of variable by defining
\[
w \coloneqq \frac{e^{\beta/2}(1 - e^{\beta(x-1)})s + e^{\beta x} -1}{e^{\beta} - 1},
\]
from which we have
\[
\frac{ds}{dw} = \frac{e^{\beta} - 1}{e^{\beta/2}(1 - e^{\beta(x-1)})}, \quad \text{ and } \quad w = 
\begin{cases}
1 &\text{when } s = e^{\beta/2},\\
e^{\beta(x-1)} &\text{when } s = e^{-\beta/2}.
\end{cases}
\]
Hence, by (\ref{eq:C92k}), we have
\begin{align}
&\int_0^1 u(x, t, \beta)\cdot u(t, x, \beta)\,dt \nonumber\\
=\,& \frac{\beta\ e^{2\beta(x-1)}}{(e^{\beta}-1)(1 - e^{\beta(x-1)})^2} \int_{e^{\beta(x-1)}}^1 \frac{(e^{\beta}-1)w - e^{\beta x} + 1}{w^4}\,dw \nonumber\\
=\,& \frac{\beta\ e^{2\beta(x-1)}}{(e^{\beta}-1)(1 - e^{\beta(x-1)})^2} \left(\frac{1 - e^{\beta}}{2w^2} + \frac{e^{\beta x} - 1}{3w^3}\right)\bigg\vert_{e^{\beta(x-1)}}^1 \nonumber\\
=\,& \frac{\beta\left(1 + e^{\beta} + 2e^{\beta x} + 2e^{-\beta(x-1)}\right)}{6(e^{\beta} - 1)} \nonumber\\
=\,& \frac{\beta \left(\cosh(\beta/2) + 2 \cosh\big(\beta[2x - 1]/2\big)\right)}{6 \sinh{(\beta/2)}}. \nonumber  \qedhere
\end{align}
\end{proof}

\section{Proof of Lemma \ref{L40}}

To prove Lemma \ref{L40}, we use the same techniques developed in the proof of Corollary 4.3 in \cite{MuellerStarr}, in which the authors constructed a coupling of two point processes. A point process is a random, locally finite, nonnegative integer valued measure. Let $\mathcal{X}_k$ denote the set of all Borel measures $\xi$ on $\mathbb{R}^k$ such that $\xi(A) \in \{0, 1, 2, \ldots \}$ for any bounded Borel set $A$ in $\mathbb{R}^k$. Then, a point process on $\mathbb{R}^k$ is a random variable which takes value in $\mathcal{X}_k$.

Suppose $\mu, \nu$ are two measures on $\mathbb{R}^k$. We say $\mu \le \nu$ if $\mu(A) \le \nu(A)$ for any $A \in \mathcal{B}(\mathbb{R}^k)$.

\begin{lemma}\label{L43}
Suppose $\hat{\alpha}$ and $\alpha$ are two probability measures on $[0, 1]$ with density $f(x)$, $g(x)$ respectively. If, for any $x\in[0,1]$, $f(x) \ge p \cdot g(x)$ for some $0 < p < 1$, then there exist random variables $X$, $Y$ and $B_p$ such that the following hold.
\begin{itemize}
\item $X$ is $\hat{\alpha}$-distributed, $Y$ is $\alpha$-distributed and $B_p$ is Bernoulli distributed with $\p(B_p = 1) = p$.
\item $B_p$ and $Y$ are independent.
\item Define two point processes $\eta$, $\xi$ on $[0, 1]$ as follows,
\[
\xi(A) \coloneqq \mathds{1}_A(X) \quad \text{and} \quad \eta(A) \coloneqq B_p \cdot \mathds{1}_A(Y), \qquad \forall A \in \mathcal{B}([0, 1]).
\]
Then, we have $\eta \le \xi$.
\end{itemize}
\end{lemma}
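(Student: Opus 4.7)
The plan is to exploit the pointwise domination $f \ge p g$ to write $\hat{\alpha}$ as a mixture in which $\alpha$ appears with weight $p$, and then to build the coupling by deciding whether to copy $Y$ into $X$ according to an independent Bernoulli.

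Concretely, set $h(x) \coloneqq (f(x) - p g(x))/(1-p)$. By hypothesis $h \ge 0$, and since $f$ and $g$ integrate to $1$, $h$ integrates to $1$ as well, so $h$ is a probability density on $[0,1]$. Let $\mu$ denote the measure with density $h$. Then we have the mixture decomposition
\[
\hat{\alpha} \;=\; p\,\alpha \;+\; (1-p)\,\mu.
\]
I will now construct $(X,Y,B_p)$ on a single probability space as follows. Sample $Y \sim \alpha$ and $B_p \sim \mathrm{Bernoulli}(p)$ independently of each other, and, independently of $(Y,B_p)$, sample a third random variable $Z \sim \mu$. Define
\[
X \;\coloneqq\; B_p \cdot Y \;+\; (1-B_p)\cdot Z,
\]
i.\,e.\,$X = Y$ on $\{B_p = 1\}$ and $X = Z$ on $\{B_p = 0\}$.

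The verification is then a short check. The marginal of $Y$ is $\alpha$ and $B_p$ is independent of $Y$ by construction. The marginal of $X$ is $p \cdot \alpha + (1-p)\cdot \mu = \hat{\alpha}$, using that, conditional on $B_p = 1$, $X = Y \sim \alpha$, and conditional on $B_p = 0$, $X = Z \sim \mu$. For the point-process inequality, fix $A \in \mathcal{B}([0,1])$. On the event $\{B_p = 0\}$, $\eta(A) = 0 \le \xi(A)$ trivially. On $\{B_p = 1\}$, $X = Y$, so $\eta(A) = \mathds{1}_A(Y) = \mathds{1}_A(X) = \xi(A)$. Hence $\eta \le \xi$ almost surely.

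There is no real obstacle here; the only substantive point is recognizing that $f \ge p g$ is exactly the condition needed to make $(f - p g)/(1-p)$ a bona fide probability density, which in turn enables the mixture representation $\hat{\alpha} = p\alpha + (1-p)\mu$ that drives the coupling.
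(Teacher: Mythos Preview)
Your proof is correct and is essentially the same as the paper's: both write $\hat{\alpha}=p\,\alpha+(1-p)\,\mu$ with $\mu$ having density $(f-pg)/(1-p)$, sample $Y\sim\alpha$, $B_p\sim\mathrm{Bernoulli}(p)$, and an independent auxiliary variable from $\mu$, and set $X$ equal to $Y$ when $B_p=1$ and to the auxiliary variable otherwise. The only difference is notation (you write $Z$ and $\mu$ where the paper writes $Y'$ and works directly with the density).
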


\begin{proof}
Let $Y$, $Y'$ and $B_p$ be independent random variables defined on the same probability space such that $Y$ is $\alpha$-distributed, $B_p$ is Bernoulli distributed with $\p(B_p = 1) = p$ and the density of the distribution of $Y'$ is $\frac{f(x) - p \cdot g(x)}{1 - p}$. Define $X \coloneqq B_p Y + (1 - B_p)Y'$. To see that $X$ thus defined is $\hat{\alpha}$-distributed, we have
\[
\p(X \in A) = p \int_A g(x)\,dx + (1 - p) \int_A \frac{f(x) - p \cdot g(x)}{1 - p}\,dx = \int_A f(x)\,dx,
\]
for any $A \in \mathcal{B}([0, 1])$.
Finally, the two point processes $\xi$ and $\eta$ thus defined satisfy $\eta \le \xi$, since for any $A \in \mathcal{B}([0, 1])$, when $B_p = 1$, we have $\xi(A) = \eta(A)$, and, when $B_p = 0$, we have $\eta(A) = 0$.
\end{proof}

\begin{lemma}\label{L44}
Suppose $\hat{\alpha}$ and $\alpha$ are two probability measures on $[0, 1]$ with density $f(x)$, $g(x)$ respectively. If $(1 - \theta_1)g(x) \le f(x) \le (1 + \theta_2) g(x)$ for some $\theta_1, \theta_2 \ge 0$ with $\theta_1 + \theta_2 < 1$, then there exist random variables $X$, $Y$, $Z$ and $B_{\theta}$ such that the following hold.
\begin{itemize}
\item $X$ is $\hat{\alpha}$-distributed, $Y$ and $Z$ are $\alpha$-distributed and $B_{\theta}$ is Bernoulli distributed with $\p(B_{\theta} = 1) = \theta$, where $ \theta = \theta_1 + \theta_2$.
\item $B_{\theta}$, $Y$ and $Z$ are independent.
\item Define two point processes $\xi$, $\zeta$ on $[0, 1]$ as follows,
\[
\xi(A) \coloneqq \mathds{1}_A(X) \quad \text{and} \quad \zeta(A) \coloneqq \mathds{1}_A(Y) + B_{\theta} \cdot \mathds{1}_A(Z), \qquad \forall A \in \mathcal{B}([0, 1]).
\]
Then, we have $\xi \le \zeta$.
\end{itemize}
\end{lemma}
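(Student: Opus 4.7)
The plan is to construct the coupling by first sampling $Y, Z, B_{\theta}$ independently with their prescribed marginals, and then defining $X$ as a randomized function of this triple so that almost surely $X$ lies in $\{Y\} \cup \{Z : B_{\theta} = 1\}$. Any such construction automatically satisfies $\xi(A) = \mathds{1}_A(X) \le \mathds{1}_A(Y) + B_{\theta} \mathds{1}_A(Z) = \zeta(A)$ for every Borel set $A$, so the only real work is to choose the construction so that the marginal law of $X$ is $\hat\alpha$.

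Concretely, I will set $X = Y$ whenever $B_{\theta} = 0$, and when $B_{\theta} = 1$ I will toss an auxiliary, $Z$-dependent coin, setting $X = Z$ with probability $b(Z)$ and $X = Y$ with probability $1 - b(Z)$, for some function $b : [0,1] \to [0,1]$ to be determined. The asymmetry of this coin in $Y$ versus $Z$ is crucial: the extra point $Z$ is what carries the pointwise excess $f - g$ on the set where $f > g$, whereas on the set where $f < g$ we simply let $X = Y$ (which is fine, since $\zeta$ is only required to dominate $\xi$, not equal it).

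A direct computation gives that the density of $X$ under this scheme equals $g(x)\bigl(1 - \theta \bar{b} + \theta b(x)\bigr)$, where $\bar{b} = \int b(z) g(z)\,dz$. Setting this equal to $f(x)$ forces $b$ to have the form $b(x) = \frac{f(x) - g(x)}{\theta g(x)} + c$ for some constant $c$, and integrating this identity against $g(x)\,dx$ shows $\bar b = c$, placing no further restriction on $c$. Under the two-sided hypothesis on $f$ and $g$, the ratio $\frac{f(x) - g(x)}{\theta g(x)}$ ranges within $[-\theta_1/\theta,\, \theta_2/\theta]$, so requiring $b(x) \in [0,1]$ collapses to $c \in [\theta_1/\theta,\, 1 - \theta_2/\theta]$. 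Since $\theta_1 + \theta_2 = \theta$, this interval degenerates to the single point $c = \theta_1/\theta$, yielding
\[
b(x) = \frac{f(x) - (1 - \theta_1) g(x)}{\theta g(x)}, \qquad 1 - b(x) = \frac{(1 + \theta_2) g(x) - f(x)}{\theta g(x)},
\]
both of which are nonnegative precisely by the two-sided hypothesis.

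The only delicate point is therefore the tightness check that the feasible interval for $c$ is nonempty, which is exactly the content of $\theta_1 + \theta_2 \le \theta$, here satisfied with equality. Once $b$ is defined as above, a routine one-line verification confirms that the density of $X$ equals $f$, and the domination $\xi \le \zeta$ is built into the construction, completing the coupling.
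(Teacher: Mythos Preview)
Your construction is essentially identical to the paper's: sample $Y$, $Z$, $B_\theta$ independently, set $X=Y$ when $B_\theta=0$, and when $B_\theta=1$ set $X=Z$ with probability $b(Z)=\frac{f(Z)-(1-\theta_1)g(Z)}{\theta\, g(Z)}$ and $X=Y$ otherwise; the paper simply states this $b$ and verifies the marginal of $X$ by partitioning $\{X\in A\}$ into the three cases, whereas you additionally explain how the formula for $b$ is forced. The only small omission is the remark that $g(Z)>0$ almost surely (so $b(Z)$ is well defined), which the paper notes explicitly.
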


\begin{proof}
Let $Y$, $Z$ and $B_{\theta}$ be independent random variables defined on the same probability space such that $Y$, $Z$ is $\alpha$-distributed, $B_{\theta}$ is Bernoulli distributed with $\p(B_{\theta} = 1) = \theta$. We define a new random variable $X$ as follows. Conditioned on $Y = y$ and $Z = z$,
\begin{itemize}
\item if $B_{\theta} = 0$, define $X = y$.
\item If $B_{\theta} = 1$, flip a coin $W$ with probability of heads being $\frac{f(z) - (1 - \theta_1)g(z)}{\theta\cdot g(z)}$. If the result is head, define $X = z$, else define $X = y$.
\end{itemize}
Note that, without loss of generality, here we may assume $g(z) > 0$, since $\p(g(Z) = 0) = 0$. It is straight forward that the two point processes $\xi$ and $\zeta$ thus defined satisfy $\xi \le \zeta$. We complete the proof by verifying that $X$ thus defined has distribution $f(x)$.\\
For any $A \in \mathcal{B}([0, 1])$, the event $\{X \in A\}$ can be partitioned into three parts: $\{B_{\theta} = 0, Y \in A\}$,
$\{B_{\theta} = 1, W \text{ is head }, Z \in A\}$ and $\{B_{\theta} = 1, W \text{ is tail }, Y \in A\}$. We have
\begin{align*}
\p(\{B_{\theta} = 0, Y \in A\}) = &\ \textstyle(1 - \theta) \int_A g(x)\,dx = (1 - \theta)\alpha(A),\\
\p(\{B_{\theta} = 1, W \text{ is head}, Z \in A\})
 = &\ \textstyle\theta \int_A \frac{f(z) - (1 - \theta_1)g(z)}{\theta\cdot g(z)}g(z)\,dz\\
 = &\ \textstyle\int_A f(z)\,dz - (1 - \theta_1)\alpha(A),\\
\p(\{B_{\theta} = 1, W \text{ is tail}, Y \in A\})
 = &\ \textstyle\theta \int_A g(y)\,dy \int_0^1\big(1 -\frac{f(z) - (1 - \theta_1)g(z)}{\theta\cdot g(z)}\big)g(z)\,dz\\
 = &\ \textstyle\alpha(A) \int_0^1 (1 + \theta_2)g(z) - f(z)\,dz\\
 = &\ \textstyle\alpha(A)\,\theta_2.
\end{align*}
Here we evaluate the last two probabilities by conditioning on the value of $Z$. Summing up the three probabilities, we get
\[
\textstyle\p(\{X \in A\}) = \int_A f(z)\,dz. \qedhere
\]
\end{proof}

Next, we define a triangular array of random variables in $[0, 1]$.
\begin{definition}\label{D4}
Suppose that $\{q_n\}_{n=1}^{\infty}$ is a sequence such that $q_n > 0$. For any $n \in \mathbb{N}$, we define the random vector $(Y_1^{(n)}, Y_2^{(n)}, \ldots, Y_n^{(n)})$ as follows. Let $\{Y_i\}_{i=1}^n$ be i.i.d.\,uniform random variables on $[0, 1]$. Let $\{Y_{(i)}\}_{i = 1}^n$ be the order statistics of $\{Y_i\}_{i=1}^n$. Independently, let $\pi$ be a $\n$-distributed random variable on $S_n$. We define $Y_i^{(n)} \coloneqq Y_{(\pi(i))}$ for all $i \in [n]$.
\end{definition}

In the remainder of this paper, we use $(Y_1^{(n)}, \ldots, Y_n^{(n)})$ specifically to denote the random vector defined as above.
Next, we define the function $\Phi$ which maps vectors in $\mathbb{R}^n$ or $n$ points in $\mathbb{R}^2$ to the induced permutation in $S_n$.
\begin{definition}\label{D41}
Suppose $\bs{x} = (x_1, x_2, \ldots, x_n)$ is a vector in $\mathbb{R}^n$ such that all its entries are distinct. Let $\Phi(\bs{x})$ denote the permutation in $S_n$ such that, for any $i \in [n]$, $\Phi(\bs{x})(i) = j$\,\ if $x_i$ is the $j$-th smallest entry in $\bs{x}$. Similarly, suppose $\bs{z} = \{(x_i, y_i)\}_{i = 1}^{n}$ are $n$ points in $\mathbb{R}^2$ such that they share no $x$ coordinate nor any $y$ coordinate. Let $\Phi(\bs{z})$ denote the permutation in $S_n$ such that, for any $i \in [n]$, $\Phi(\bs{z})(i) = j$\,\ if there exits $k \in [n]$, such that $x_k$ is the $i$-th smallest term in $\{x_i\}_{i = 1}^{n}$ and $y_k$ is the $j$-th smallest term in $\{y_i\}_{i = 1}^{n}$.
\end{definition}
\begin{remark*}
From the above definitions, it can be easily seen that
\begin{itemize}
\item[(a)] For any $x_1 < \cdots < x_n$ and $\bs{y} = (y_1, \ldots, y_n) \in \mathbb{R}^n$, we have $\Phi(\bs{y}) = \Phi(\{(x_i, y_i)\}_{i = 1}^n )$.
\item[(b)] For any $\bs{y} = (y_1, \ldots, y_n) \in \mathbb{R}^n$ and increasing indices $\bs{b} = (b_1, \ldots, b_m)$, we have $\Phi(\bs{y})_{\bs{b}} = \Phi((y_{b_1}, \ldots, y_{b_m}))$.
\item[(c)] $\Phi((Y_1^{(n)}, Y_2^{(n)}, \ldots, Y_n^{(n)}))$ is $\n$-distributed.
\end{itemize}
\end{remark*}

Let $D_n$ be the set of vectors in $[0, 1]^n$ which contain (at least two) identical entries. It is not hard to show that the density function of $(Y_1^{(n)}, \ldots, Y_n^{(n)})$ is the following,
\[
f_n(\bs{y}) =\n(\Phi(\bs{y}))\cdot n\,! \qquad \text{for all } \bs{y} \in [0, 1]^n \setminus D_n.
\]
Since $\{Y_i^{(n)}\}_{i = 1}^{n} = \{Y_i\}_{i = 1}^{n}$ are $n$ i.i.d.\,uniform samples from $[0, 1]$, we have $\p((Y_1^{(n)}, \ldots, Y_n^{(n)}) \in D_n) = 0$. Intuitively, for any $0 \le y_1 < \cdots < y_n \le 1$, there are $n!$ ways to choose the vector $(Y_1, \ldots, Y_n)$ such that $\{Y_i\}_{i=1}^n = \{y_i\}_{i=1}^n$. Moreover, conditioned on $\{Y_i\}_{i=1}^n = \{y_i\}_{i=1}^n$, the probability of $(Y_1^{(n)}, \ldots, Y_n^{(n)}) = (y_{\pi(1)}, \ldots, y_{\pi(n)})$ is $\n(\pi)$. Since the measure of $D_n$ is zero, when $\bs{y} \in D_n$, we can define $f_n(\bs{y})$ to be an arbitrary value.

\begin{lemma}\label{L47}
Given $i \in [n]$ and a vector $(y_1, \ldots, y_{i-1}, y_{i+1}, \ldots, y_n) \in [0, 1]^{n-1}\setminus D_{n-1}$, let $\hat{\alpha}$ denote the distribution of $\,Y_i^{(n)}$ conditioned on the event $\{Y_j^{(n)} = y_j \text{ for all } j\in [n]\setminus \{i\} \}$. Then $\hat{\alpha}$ has density $f(y)$ on $[0, 1]$ such that, for any $y, y' \in [0, 1]\setminus \{y_1, \ldots, y_{i-1}, y_{i+1}, \ldots, y_n\}$, we have
\[
f(y) \ge \min{\left(q_n^n, \frac{1}{q_n^n}\right)}, \qquad f(y) - f(y') \le \max{\left(q_n^n, \frac{1}{q_n^n}\right)} - 1.
\]
\end{lemma}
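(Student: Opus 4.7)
The plan is to compute the conditional density $f(y)$ explicitly from the joint density of $(Y_1^{(n)}, \ldots, Y_n^{(n)})$ and to exploit the fact that, as $y = y_i$ varies with the other coordinates fixed, only the inversions of $\Phi(\bs{y})$ that involve position $i$ can change. Since the joint density equals $n!\,\n(\Phi(\bs{y}))$ off the diagonal and, as a function of $y$, is proportional to $q_n^{\inv(\Phi(\bs{y}(y)))}$ with $\bs{y}(y) := (y_1, \ldots, y_{i-1}, y, y_{i+1}, \ldots, y_n)$, I set $G := \{y_j : j \neq i\}$ (a finite, hence measure-zero, set) and write
\[
f(y) = \frac{q_n^{\inv(\Phi(\bs{y}(y)))}}{\int_0^1 q_n^{\inv(\Phi(\bs{y}(t)))}\,dt}, \qquad y \in [0,1]\setminus G.
\]

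Next I would isolate the $y$-dependent part of $\inv(\Phi(\bs{y}(y)))$. The inversions that do not involve position $i$ depend only on the fixed coordinates and contribute a constant $C$ that cancels between numerator and denominator. The inversions involving position $i$ are counted by
\[
N(y) := |\{j < i : y_j > y\}| + |\{k > i : y > y_k\}|,
\]
so $f(y) = q_n^{N(y)}/Z$ with $Z := \int_0^1 q_n^{N(t)}\,dt$. Since $N$ is piecewise constant on $[0,1]\setminus G$ and takes integer values in $\{0, 1, \ldots, n-1\}$, I obtain $1 \le Z \le q_n^{n-1}$ when $q_n \ge 1$ and $q_n^{n-1} \le Z \le 1$ when $q_n \le 1$.

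From these bounds both claims drop out by direct computation. For the pointwise lower bound: if $q_n \ge 1$ then $f(y) \ge 1/Z \ge q_n^{-(n-1)} \ge q_n^{-n}$, and if $q_n \le 1$ then $f(y) \ge q_n^{n-1}/Z \ge q_n^{n-1} \ge q_n^n$. For the oscillation bound: if $q_n \ge 1$ then $f(y) - f(y') \le (q_n^{n-1} - 1)/Z \le q_n^{n-1} - 1 \le q_n^n - 1$, and if $q_n \le 1$ then $f(y) - f(y') \le (1 - q_n^{n-1})/Z \le 1/q_n^{n-1} - 1 \le 1/q_n^n - 1$. There is no real obstacle here, only bookkeeping: the set $G$ must be excluded because $\Phi$ is undefined when two coordinates coincide, and one must correctly enumerate the inversions touching position $i$ to see that only $N(y)$ varies.
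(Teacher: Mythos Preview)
Your proof is correct and follows essentially the same approach as the paper: both compute the conditional density explicitly, both rely on the key observation that changing only the $i$-th coordinate alters the inversion count by at most $n-1$, and both split into the cases $q_n\ge 1$ and $q_n\le 1$. The only cosmetic difference is that you isolate the $y$-dependent inversion count $N(y)$ and bound the normalizing constant $Z$ directly, whereas the paper works with the ratio $f(y)/f(y')$ together with the trivial bounds $\max f\ge 1$ and $\min f\le 1$ for a piecewise-constant density; the resulting inequalities are identical.
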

\begin{proof}
Since $(Y_1^{(n)}, \ldots, Y_n^{(n)})$ has density $f_n(\bs{y}) =\n(\Phi(\bs{y}))\cdot n\,!$ on $[0, 1]^n \setminus D_n$, the density $f(y)$ of $\hat{\alpha}$ is given by
\[
f(y) = \frac{\n(\Phi((y_1, \ldots, y_{i-1}, y, y_{i+1}, \ldots, y_n)))}
{\int_0^1\n(\Phi((y_1, \ldots, y_{i-1}, t, y_{i+1}, \ldots, y_n)))\,dt},
\]
for any $y \in [0, 1]/\{y_1, \ldots, y_{i-1}, y_{i+1}, \ldots, y_n\}$.
It can be seen from the definition that $f(y)$ is a simple function which takes at most $n$ different values. Let $M$ and $m$ denote the maximum and minimum of $f(y)$ respectively. Then we have $M \ge 1$ and $0 < m \le 1$. Moreover, for any $y, y'\in [0, 1]$, let $\bs{y} \coloneqq (y_1, \ldots, y_{i-1}, y, y_{i+1}, \ldots, y_n)$ and $\bs{y}' \coloneqq (y_1, \ldots, y_{i-1}, y', y_{i+1}, \ldots, y_n)$. We have
\[
|\inv(\Phi(\bs{y})) - \inv(\Phi(\bs{y}'))| \le n - 1.
\]
That is, if $\bs{y}$ and $\bs{y}'$ differ at one entry, the number of inversions of the induced permutations differ at most by $n-1$. Hence, assuming $q_n \ge 1$, for any $y, y'\in [0, 1]$, we have
\[
\frac{1}{q_n^{n-1}} \le \frac{f(y)}{f(y')} \le q_n^{n-1}.
\]
Choose $y'$ such that $f(y') = M$, we have $f(y) \ge M/q_n^{n-1} \ge 1/q_n^n$. For the second part, we choose $y, y'$ such that $f(y) = M$ and $f(y') = m$. Then we have $M/m - 1 \le q_n^{n-1} - 1 \le q_n^n - 1$. Thus, $M - m \le q_n^n - 1$, since $0 < m \le 1$.
The argument for the case when $0 < q_n < 1$ is similar.
\end{proof}

\begin{lemma}\label{L45}
Given $n \in \mathbb{N}$ and $q_n > 0$, for any $m \le n$ and any increasing indices $\bs{b} = (b_1, \ldots, b_m)$, there exists a random vector $(V_1, \ldots, V_n) \in [0, 1]^n$ and $2m$ independent random variables $\{U_i\}_{i =1}^{m} \cup \{B_i\}_{i = 1}^{m}$ such that $(V_1, \ldots, V_n)$ has the same distribution as $(Y_1^{(n)}, \ldots, Y_n^{(n)})$, each $U_i$ is uniformly distributed on $[0, 1]$ and each $B_i$ is a Bernoulli random variable with $\p(B_i = 1) = \min(q_n^n, 1/q_n^n)$. Moreover, if we define two point processes as follows,
\[
\xi_{\bs{b}}^{(n)}(A) \coloneqq \sum_{i = 1}^{m}\mathds{1}_A((i, V_{b_i})), \quad
\eta_m(A) \coloneqq \sum_{i = 1}^{m}B_i\cdot\mathds{1}_A((i, U_i)), \ \forall A \in \mathcal{B}(\mathbb{N}\times[0, 1]),
\]
we have $\eta_m \le \xi_{\bs{b}}^{(n)}$ almost surely.
\end{lemma}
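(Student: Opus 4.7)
The strategy is to construct the coupling by sequentially applying Lemma \ref{L43} once for each index $b_i$, taking the uniform density $g \equiv 1$ on $[0,1]$ and threshold $p = 1/q_n^n$. Before doing so, I would record an immediate strengthening of Lemma \ref{L47}: for $q_n \ge 1$ and any subset $S \subseteq [n] \setminus \{i\}$, the conditional density of $Y_i^{(n)}$ given $(Y_j^{(n)})_{j \in S}$ is still bounded below by $1/q_n^n$. This is because the partial conditional density equals the integral of the full conditional density in Lemma \ref{L47} against the conditional law of the unconditioned coordinates, and a pointwise lower bound is preserved under such averaging.

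With this in hand, proceed as follows. First sample $(V_j)_{j \notin \{b_1, \ldots, b_m\}}$ from the marginal of the corresponding coordinates of $(Y_1^{(n)}, \ldots, Y_n^{(n)})$. Then, for $i = 1, 2, \ldots, m$ in order, let $f_i$ denote the conditional density of $Y_{b_i}^{(n)}$ given all previously drawn variables, namely $(V_j)_{j \notin \{b_1, \ldots, b_m\}}$ together with $V_{b_1}, \ldots, V_{b_{i-1}}$. By the extended version of Lemma \ref{L47}, $f_i \ge 1/q_n^n$ almost surely, so applying the construction in the proof of Lemma \ref{L43} conditionally (with a fresh, independent copy of randomness introduced at this step) produces $(V_{b_i}, U_i, B_i)$ such that $V_{b_i}$ has density $f_i$, $U_i$ is uniform on $[0,1]$, $B_i$ is Bernoulli with parameter $1/q_n^n$, the pair $(U_i, B_i)$ is jointly independent of everything drawn before, and $V_{b_i} = U_i$ on the event $\{B_i = 1\}$.

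By construction, $(V_1, \ldots, V_n)$ has the same joint distribution as $(Y_1^{(n)}, \ldots, Y_n^{(n)})$, and the $2m$ variables $\{U_i\}_{i=1}^m \cup \{B_i\}_{i=1}^m$ are mutually independent with the prescribed marginals, since at each step the newly introduced pair $(U_i, B_i)$ is independent of all earlier variables, including the previous $(U_j, B_j)$ with $j < i$. The inequality $\eta_m \le \xi_{\bs{b}}^{(n)}$ then reduces to an index-by-index case analysis: on $\{B_i = 0\}$ the $i$-th summand of $\eta_m$ vanishes, while on $\{B_i = 1\}$ we have $V_{b_i} = U_i$, so the $i$-th summands of $\eta_m$ and $\xi_{\bs{b}}^{(n)}$ coincide on every Borel set.

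The main technical point is the extension of Lemma \ref{L47} from full to partial conditioning, which is what licenses applying Lemma \ref{L43} at every sequential step. A secondary bookkeeping issue is to enrich the probability space so that each newly invoked pair $(U_i, B_i)$ is genuinely independent of the past; note that we do not need $(U_i, B_i)$ to be independent of the full vector $(V_1, \ldots, V_n)$ --- indeed $V_{b_i}$ is coupled to $(U_i, B_i)$ by the very construction --- but only of the previously sampled variables, which is exactly what delivers mutual independence among the $2m$ variables $\{U_i, B_i\}$.
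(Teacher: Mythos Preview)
Your proof is correct and follows the same overall strategy as the paper (iteratively applying Lemma~\ref{L43} with the lower bound from Lemma~\ref{L47}), but the conditioning scheme differs. The paper first samples the \emph{entire} vector $(Y_1^{(n)},\ldots,Y_n^{(n)})$ and then, for each $i$, \emph{resamples} $Y_{b_i}^{(n)}$ conditional on all $n-1$ other coordinates (using the already-resampled values for $b_j$ with $j<i$ and the original values for $b_j$ with $j>i$). This is a Gibbs-sampler sweep, which preserves the joint law at every step, and because the conditioning is always on the full complement of $n-1$ coordinates, Lemma~\ref{L47} applies verbatim. By contrast, you sample the complement first and then draw $V_{b_1},\ldots,V_{b_m}$ sequentially, conditioning only on what has been drawn so far; this forces you to extend Lemma~\ref{L47} to partial conditioning. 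Your extension is correct (the partial conditional density is a mixture of full conditional densities, and mixtures preserve pointwise lower bounds), so nothing is lost. The paper's Gibbs trick buys a slightly shorter argument by sidestepping that extension, while your direct sequential construction is arguably more transparent about why the joint law of $(V_1,\ldots,V_n)$ is correct.
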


\begin{proof}
Given $n$, $m$ and $\bs{b}$, let $(Y_1^{(n)}, \ldots, Y_n^{(n)})$ be as defined in Definition \ref{D4} and, independently, define $2m$ independent random variables $\{U_i\}_{i =1}^{m} \cup \{B_i\}_{i = 1}^{m}$ such that each $U_i$ is uniformly distributed on $[0, 1]$ and each $B_i$ is a Bernoulli random variable with $\p(B_i = 1) = \min(q_n^n, 1/q_n^n)$. We define the random vector $(V_1, \ldots, V_n)$ as follows,
\begin{itemize}
\item Sample the random vector $(Y_1^{(n)}, \ldots, Y_n^{(n)})$, say, we get\\
 $(Y_1^{(n)}, \ldots, Y_n^{(n)}) = (y_1, \ldots, y_n)$.
\item For $j \in [n]\setminus\{b_i\}_{i = 1}^m$, let $V_j \coloneqq y_j$.
\item For each $i \in [m]$, we resample $Y_{b_i}^{(n)}$ one by one, conditioned on the current value of other $Y_j^{(n)}$. Let $y'_{b_i}$ denote the new value of $Y_{b_i}^{(n)}$ after the resampling and define $V_{b_i} \coloneqq y'_{b_i}$ . Specifically, for each $i \in [m]$, we sample a value $y'_{b_i}$ according to the distribution of $Y_{b_i}^{(n)}$, conditioned on the event 
\[
\left\{Y_{b_j}^{(n)} = y'_{b_j} \text{ for }\forall j < i \text{ and } Y_k^{(n)} = y_k \text{ for }\forall k \in [n]\setminus\{b_j\}_{j\in[i]}\right\}.
\]
\item In each resampling step, say, resampling $Y_{b_i}^{(n)}$, let $\hat{\alpha}$ denote the above conditional distribution of $Y_{b_i}^{(n)}$. By Lemma \ref{L47}, we know that that $\hat{\alpha}$ has density $f(y)$ with $f(y) \ge \min(q_n^n, 1/q_n^n)$. Hence, we can couple this resampling procedure with variables $U_i$ and $B_i$ in the same fashion as in the proof of Lemma \ref{L43}, with $\alpha$ in that lemma being the uniform measure on $[0, 1]$. Thus we have $\mathds{1}_A((i, V_{b_i})) \ge B_i\cdot\mathds{1}_A((i, U_i))$ a.\,s.\, for any $A \in \mathcal{B}(\mathbb{N}\times[0, 1])$.
\end{itemize}
It can be easily seen from the above procedure that $(V_1, \ldots, V_n)$ thus defined has the same distribution as $(Y_1^{(n)}, \ldots, Y_n^{(n)})$, and
\[
\eta_m(A) = \sum_{i = 1}^{m}B_i\cdot\mathds{1}_A((i, U_i)) \le \sum_{i = 1}^{m}\mathds{1}_A((i, V_{b_i})) = \xi_{\bs{b}}^{(n)}(A) \text{ a.\,s.\,}
\]
for any $A \in \mathcal{B}(\mathbb{N}\times[0, 1])$.
\end{proof}

\begin{lemma}\label{L46}
Given $n \in \mathbb{N}$ and $q_n > 0$ such that $\max(q_n^n, 1/q_n^n) < 2$, for any $m \le n$ and any increasing indices $\bs{b} = (b_1, \ldots, b_m)$, there exists a random vector $(V_1, \ldots, V_n) \in [0, 1]^n$ and $3m$ independent random variables $\{U_i, U'_i, B_i\}_{i = 1}^{m}$ such that $(V_1, \ldots, V_n)$ has the same distribution as the vector $(Y_1^{(n)}, \ldots, Y_n^{(n)})$, each $U_i, U'_i$ are uniformly distributed on $[0, 1]$ and each $B_i$ is a Bernoulli random variable with $\p(B_i = 1) = \max(q_n^n, 1/q_n^n) - 1$. Moreover, if we define two point processes as follows,
\begin{align*}
\xi_{\bs{b}}^{(n)}(A) &\coloneqq \sum_{i = 1}^{m}\mathds{1}_A((i, V_{b_i})),& &\forall A \in \mathcal{B}(\mathbb{N}\times[0, 1])\\
\zeta_m(A) &\coloneqq \sum_{i = 1}^{m}\mathds{1}_A((i, U'_i)) + B_i\cdot\mathds{1}_A((i, U_i)),& &\forall A \in \mathcal{B}(\mathbb{N}\times[0, 1])
\end{align*}
we have $\xi_{\bs{b}}^{(n)} \le \zeta_m$ almost surely.
\end{lemma}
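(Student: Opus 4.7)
The plan is to mirror the proof of Lemma \ref{L45} and modify only the atomic coupling used at each resampling step. Concretely, first sample $(Y_1^{(n)}, \ldots, Y_n^{(n)})$ as in Definition \ref{D4} and, independently, sample the $3m$ auxiliary variables $\{U'_i\}_{i\in[m]}, \{U_i\}_{i\in[m]}, \{B_i\}_{i\in[m]}$ with the prescribed independent marginals. Set $V_j \coloneqq Y_j^{(n)}$ for $j \notin \{b_1, \ldots, b_m\}$ and then resample $Y^{(n)}_{b_i}$ for $i = 1, \ldots, m$ sequentially, conditional on the current values of the other coordinates; at step $i$ use the fresh triple $(U'_i, U_i, B_i)$ to produce $V_{b_i}$ with the correct conditional density $f_i$ while forcing $V_{b_i} \in \{U'_i\} \cup \{U_i : B_i = 1\}$ almost surely. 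Accumulating these atomic inequalities over $i$ then yields $\xi_{\bs{b}}^{(n)} \le \zeta_m$ almost surely.

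The delicate point is the atomic coupling. A direct application of Lemma \ref{L44} with $g \equiv 1$ and $\theta_1 = \theta_2 = p \coloneqq 1/q_n^n - 1$ would produce a Bernoulli parameter $\theta = 2p$ instead of the claimed $p$, and would require the stronger hypothesis $q_n^n > 2/3$. To get the parameter $p$, I will use the full strength of Lemma \ref{L47}: the oscillation bound $\max f_i - \min f_i \le 1/q_n^n - 1 = p$ together with $\int f_i = 1$ forces $f_i(y) \in [1 - p,\, 1 + p]$ for every $y$. Given this two-sided estimate, the atomic coupling is defined by setting $V_{b_i} \coloneqq U'_i$ when $B_i = 0$, and when $B_i = 1$ flipping an auxiliary coin with head-probability $g_i(U_i) \coloneqq \bar g_i + (f_i(U_i) - 1)/p$, setting $V_{b_i} \coloneqq U_i$ on heads and $V_{b_i} \coloneqq U'_i$ on tails. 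Here $\bar g_i$ is any constant inside the interval $\bigl[(1 - \min f_i)/p,\ 1 - (\max f_i - 1)/p\bigr]$; this interval is nonempty precisely because $\max f_i - \min f_i \le p$, and this choice makes $g_i$ take values in $[0, 1]$.

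A short density calculation, partitioning on the three outcomes $\{B_i = 0\}$, $\{B_i = 1,\, V_{b_i} = U_i\}$, $\{B_i = 1,\, V_{b_i} = U'_i\}$, shows that the marginal density of $V_{b_i}$ collapses to $(1-p) + p\, g_i(y) + p(1 - \bar g_i) = 1 + (f_i(y) - 1) = f_i(y)$, as desired; meanwhile $(U'_i, U_i, B_i)$ is only read from the triple and therefore retains its original independent marginals. The hypothesis $q_n^n > 1/2$ enters only to guarantee $p < 1$, so that $\mathrm{Bernoulli}(p)$ is well-defined. I expect the main obstacle to be recognising that the two-sided pointwise bound $f_i \in [1-p,\, 1+p]$ extracted from Lemma \ref{L47} is what avoids the factor-of-two loss in a naive appeal to Lemma \ref{L44}; once this is noted, the iteration and the bookkeeping of mutual independence of $(U'_i, U_i, B_i)$ across the $m$ resampling steps go through exactly as in the proof of Lemma \ref{L45}.
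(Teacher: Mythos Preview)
Your proof is correct and follows the same route as the paper: mirror Lemma~\ref{L45}, resample the $Y^{(n)}_{b_i}$ sequentially, control each conditional density via the oscillation bound in Lemma~\ref{L47}, and apply a Lemma~\ref{L44}-type atomic coupling. Your explicit construction is not really a departure from Lemma~\ref{L44}: setting $\theta_1 = p\,\bar g_i$ and $\theta_2 = p(1-\bar g_i)$ recovers exactly the coupling of that lemma with $\theta_1+\theta_2 = p$, and the existence of such $\theta_1,\theta_2\ge 0$ satisfying $1-\theta_1\le f_i\le 1+\theta_2$ is equivalent to the oscillation bound $\max f_i - \min f_i \le p$ together with $\min f_i\le 1\le \max f_i$. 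The paper invokes Lemma~\ref{L44} with $\theta_1 = 1-\min f_i$, $\theta_2 = \max f_i - 1$, giving a step-dependent $\theta = \max f_i - \min f_i \le p$; your choice $\theta=p$ throughout is the cleaner way to guarantee the $B_i$ are genuinely i.i.d.\ with the stated parameter, but the arguments are otherwise identical.
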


\begin{proof}
The proof of this lemma is similar to the proof of Lemma \ref{L45}.
Given $n$, $m$ and $\bs{b}$, define $(Y_1^{(n)}, \ldots, Y_n^{(n)})$ as in Definition \ref{D4} and, independently, define $3m$ independent random variables $\{U_i, U'_i, B_i\}_{i = 1}^{m}$ such that each $U_i, U'_i$ are uniformly distributed on $[0, 1]$ and each $B_i$ is a Bernoulli random variable with $\p(B_i = 1) = \max(q_n^n, 1/q_n^n) - 1$. Then we define the random vector $(V_1, \ldots, V_n)$ by the same steps as in the proof of Lemma~\ref{L45}, except that, in each resampling step, we couple the resampling of $Y_{b_i}^{(n)}$ with the variables $U_i, U'_i$ and $B_i$ in the same way as in the proof of Lemma~\ref{L44}, with $\alpha$ in that lemma being the uniform measure on $[0, 1]$. Note that the second inequality in Lemma~\ref{L47} ensures that the conditions in Lemma~\ref{L44} are met. Specifically, in each resampling step, let $f(y)$ denote the density of the conditional distribution of $Y_{b_i}^{(n)}$. Let $M$, $m$ be the maximum and minimum of $f(y)$ respectively. Define $\theta_1 \coloneqq 1 - m$ and $\theta_2 \coloneqq M - 1$. Hence, $1 - \theta_1 \le f(y) \le 1 + \theta_2$ almost surely and $\theta_1 + \theta_2 = M - m \le \max(q_n^n, 1/q_n^n) - 1 < 1$.
\end{proof}

Recall that $\mathcal{X}_2$ denotes the set of all Borel measures $\xi$ on $\mathbb{R}^2$ such that $\xi(A) \in \{0, 1, 2, \ldots \}$ for any  bounded Borel set $A$ in $\mathbb{R}^2$.
\begin{definition}\label{D48}
For any $\xi \in \mathcal{X}_2$, we define the LIS of $\xi$ as follows,
\begin{align*}
\lis(\xi) \coloneqq &\max\{k : \ \exists\ (x_1, y_1), (x_2, y_2),\ldots, (x_k, y_k) \in \mathbb{R}^2 \text{ such that }\\
   &\ \ \xi(\{(x_i, y_i)\})\ge 1, \ \forall\ i \in [k] \text{ and } (x_i - x_j)(y_i - y_j)>0, \ \forall\ i \neq j \}.
\end{align*}
\end{definition}

It is easily seen that the function $\lis(\cdot)$ is non-decreasing on $\mathcal{X}_2$ in the sense that, if $\xi, \zeta \in \mathcal{X}_2$ with $\xi \le \zeta$, we have $\lis(\xi) \le \lis(\zeta)$. Moreover, for any $n$ points $\{(x_i, y_i)\}_{i = 1}^n$ in $\mathbb{R}^2$ such that $x_i \neq x_j$ and $y_i \neq y_j$ for all $i \neq j$, define the integer-valued measure $\xi$ as follows,
\[
\xi(A) \coloneqq \sum_{i = 1}^n \mathds{1}_A((x_i, y_i)), \quad \forall A \in \mathcal{B}(\mathbb{R}^2).
\]
Then we have $\lis(\xi) = \lis(\{(x_i, y_i)\}_{i = 1}^n)$, where the latter one is defined in Definition \ref{D3}.

\begin{lemma}\label{L48}
Let $(V_1, \ldots, V_n)$ be a random vector which has the same distribution as $(Y_1^{(n)}, \ldots, Y_n^{(n)})$. For any $m \le n$ and any increasing indices $\bs{b} = (b_1, \ldots, b_m)$, define the point process $\xi_{\bs{b}}^{(n)}$ as in the previous two lemmas, that is,
\[
\xi_{\bs{b}}^{(n)}(A) \coloneqq \sum_{i = 1}^{m}\mathds{1}_A((i, V_{b_i})), \quad \forall A \in \mathcal{B}(\mathbb{N}\times[0, 1]).
\]
Then $\lis(\xi_{\bs{b}}^{(n)})$ and $\lis(\pi_{\bs{b}})$ have the same distribution, where $\pi \sim \n$.
\end{lemma}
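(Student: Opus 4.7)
The plan is to exhibit a natural coupling under which the two quantities coincide almost surely; equality in distribution is then immediate. Since the law of $\lis(\xi_{\bs{b}}^{(n)})$ depends only on the joint distribution of $(V_1, \ldots, V_n)$, I can without loss of generality take $(V_1, \ldots, V_n) = (Y_1^{(n)}, \ldots, Y_n^{(n)})$ as constructed in Definition \ref{D4}. The key observation is that $\{V_j\}_{j=1}^n = \{Y_i\}_{i=1}^n$ is a family of i.i.d.\ Uniform$([0,1])$ random variables, hence almost surely pairwise distinct; I will work on this full-measure event throughout.

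First I would unpack $\lis(\xi_{\bs{b}}^{(n)})$ via Definition \ref{D48}. The atoms of $\xi_{\bs{b}}^{(n)}$ are the points $(i, V_{b_i})$ for $i \in [m]$, whose first coordinates $1, 2, \ldots, m$ are already distinct and in increasing order. Strict monotonicity in the first coordinate therefore reduces (after re-indexing) to picking indices $i_1 < i_2 < \cdots < i_k$, and strict monotonicity in the second coordinate becomes the condition $V_{b_{i_1}} < V_{b_{i_2}} < \cdots < V_{b_{i_k}}$. This is exactly the notion of $\lis$ from Definition \ref{D31} applied to the real-valued sequence $(V_{b_1}, \ldots, V_{b_m})$, so
\[
\lis(\xi_{\bs{b}}^{(n)}) = \lis\big((V_{b_1}, V_{b_2}, \ldots, V_{b_m})\big) \quad \text{a.s.}
\]

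Next I would set $\pi \coloneqq \Phi\big((V_1, \ldots, V_n)\big)$. By Remark (c) following Definition \ref{D41}, $\pi \sim \n$, so $\pi_{\bs{b}}$ is a legitimate sample of the object on the right-hand side of the lemma. By Remark (b) of the same definition, $\pi_{\bs{b}} = \Phi\big((V_{b_1}, \ldots, V_{b_m})\big)$. Since the $\lis$ of a finite sequence of distinct reals equals the $\lis$ of the permutation it induces — both depend only on the relative order of the entries — we obtain $\lis(\pi_{\bs{b}}) = \lis\big((V_{b_1}, \ldots, V_{b_m})\big)$ almost surely. Combining this with the previous display yields $\lis(\xi_{\bs{b}}^{(n)}) = \lis(\pi_{\bs{b}})$ a.s., which gives equality in distribution.

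I do not anticipate a substantial obstacle: the argument is a bookkeeping exercise built on the almost sure distinctness of the uniform samples and on Remarks (b) and (c) after Definition \ref{D41}. The only point that requires care is the identification of the point-process $\lis$ of Definition \ref{D48} with the sequence-valued $\lis$ of Definition \ref{D31}, and this reduces to the observation that the first coordinates of the atoms of $\xi_{\bs{b}}^{(n)}$ are exactly $1, 2, \ldots, m$, so any ``increasing'' tuple of atoms can be reindexed to have its first coordinates already in natural increasing order without loss of generality.
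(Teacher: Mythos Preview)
Your argument is correct and essentially identical to the paper's: both set $\pi=\Phi((V_1,\ldots,V_n))$, invoke Remarks (b) and (c) after Definition \ref{D41} to identify $\pi_{\bs{b}}$ with $\Phi((V_{b_1},\ldots,V_{b_m}))$, and then observe that $\lis(\xi_{\bs{b}}^{(n)})$ equals the $\lis$ of the induced permutation. Your version is a bit more explicit about the almost-sure distinctness and the passage from a.s.\ equality to equality in distribution, but the route is the same.
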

\begin{proof}
By the remarks after Definition \ref{D41}, we have
\[
\Phi(\{(i, V_{b_i})\}_{i = 1}^m ) = \Phi((V_{b_1}, V_{b_2}, \ldots, V_{b_m})) = \Phi((V_1, V_2, \ldots, V_n))_{\bs{b}},
\]
where $\Phi((V_1, V_2, \ldots, V_n))$ in the last term has the distribution $\n$.
The lemma follows by the fact that
\[
\lis(\xi_{\bs{b}}^{(n)}) = \lis(\{(i, V_{b_i})\}_{i = 1}^m) = \lis(\Phi(\{(i, V_{b_i})\}_{i = 1}^m )). \qedhere
\]
\end{proof}

Now we are in the position to prove Lemma \ref{L40}. In the following, we use $\lambda_n$ to denote the uniform measure on $S_n$.
\begin{proof}[Proof of Lemma \ref{L40}]
The lemma can be divided into two parts. For the first part, we show that, for any $\epsilon > 0$,
\begin{equation}
\lim_{n \to \infty} \max_{\bs{b} \in Q(n, k_n)} \n\left(\pi \in S_n : \frac{\lis(\pi_{\bs{b}})}{\sqrt{k_n}} \le 2 e^{\frac{-|\beta|}{2}} - \epsilon \right) = 0 .  \label{eq:L41b}
\end{equation}
Given $n > 0$, for any $\bs{b} \in Q(n, k_n)$, by Lemma \ref{L48}, $\lis(\xi_{\bs{b}}^{(n)})$ and $\lis(\pi_{\bs{b}})$ have the same distribution, where $\xi_{\bs{b}}^{(n)}$ is the point process defined in that lemma. Moreover, by Lemma \ref{L45}, there exists a point process $\eta_{k_n}$ such that $\eta_{k_n} \le \xi_{\bs{b}}^{(n)}$ almost surely and $\eta_{k_n}$ is defined by
\begin{equation}
\eta_{k_n}(A) \coloneqq \sum_{i = 1}^{k_n}B_{n, i}\cdot\mathds{1}_{A}((i, U_i)) \quad \forall A\in \mathcal{B}(\mathbb{N}\times[0, 1]), \label{eq:L41c}
\end{equation}
where $\{U_i\}_{i =1}^{k_n} \cup \{B_{n, i}\}_{i = 1}^{k_n}$ are $2k_n$ independent random variables with each $U_i$ being uniformly distributed on $[0, 1]$ and each $B_{n, i}$ being a Bernoulli random variable with $\p(B_{n, i} = 1) = \min(q_n^n, 1/q_n^n)$.
Hence, by the monotonicity of $\lis(\cdot)$ on $\mathcal{X}_2$, we have
\begin{align*}
\n\left(\pi \in S_n : \frac{\lis(\pi_{\bs{b}})}{\sqrt{k_n}} \le 2 e^{\frac{-|\beta|}{2}} - \epsilon \right) &=
\p\left(\frac{\lis(\xi_{\bs{b}}^{(n)})}{\sqrt{k_n}} \le 2 e^{\frac{-|\beta|}{2}} - \epsilon \right)\\
&\le \p\left(\frac{\lis(\eta_{k_n})}{\sqrt{k_n}} \le 2 e^{\frac{-|\beta|}{2}} - \epsilon \right).
\end{align*}
We complete the proof of (\ref{eq:L41b}) by showing that,
\begin{equation}
\lim_{n \to \infty} \p\left(\frac{\lis(\eta_{k_n})}{\sqrt{k_n}} > 2 e^{\frac{-|\beta|}{2}} - \epsilon \right) = 1 , \label{eq:L41d}
\end{equation}
for any $\epsilon > 0$. First we show that
\begin{equation}
\lim_{n \to \infty} \min(q_n^n, 1/q_n^n) =  e^{-|\beta|}. \label{eq:L40a}
\end{equation}
Assuming $0 < q_n \le 1$, since $\lim_{n \to \infty}n(1-q_n) = \beta$ and $\lim_{n \to \infty}\frac{\ln{q_n}}{q_n - 1} = 1$, we have
\[
\lim_{n \to \infty} q_n^n = \lim_{n \to \infty} e^{n\ln{q_n}} = \lim_{n \to \infty}e^{n(q_n - 1)} = e^{-|\beta|}.
\]
The case $q_n > 1$ can be shown similarly. Hence, by (\ref{eq:L40a}), for any $\epsilon_1>0$, there exists $N_1 > 0$ such that, for any $n > N_1$, we have $\min(q_n^n, 1/q_n^n) > e^{-|\beta| - \epsilon_1}$. Thus, by the law of large numbers and the fact that $\lim_{n \to \infty}k_n = \infty$, we have
\begin{equation}
\lim_{n \to \infty} \p\left(\textstyle\sum_{i = 1}^{k_n} B_{n, i} > k_n e^{-|\beta| - \epsilon_1}\right) = 1. \label{eq:L41e}
\end{equation}
Given $\bs{U}=(U_1, \ldots, U_{k_n})$ and $\bs{B}=(B_{n, 1}, \ldots, B_{n, k_n})$, let $\Lambda(\bs{U},\bs{B})$ denote the set of points in $\mathbb{R}^{2}$ defined by
\[
\Lambda(\bs{U},\bs{B}) \coloneqq \{(i, U_i) : i \in [k_n] \text{ and } B_{n, i} = 1\}.
\]
By the definition of $\eta_{k_n}$ and Definition \ref{D48}, we have
\[
\lis(\eta_{k_n}) = \lis(\Lambda(\bs{U},\bs{B})).
\]
Moreover, conditioned on $\sum_{i = 1}^{k_n} B_{n, i} = m$, by the independence of $\bs{U}$ and $\bs{B}$, it is easily seen that $\lis(\Lambda(\bs{U},\bs{B}))$ has the same distribution as $\lis(\pi)$ with $\pi \sim \lambda_m$.
For any $0 < \epsilon_2, \epsilon_3 < 1$, by the result of Vershik and Kerov \cite{VK}, there exists $M > 0$ such that, for any $m > M$,
\begin{equation}
\lambda_m\left(\frac{\lis(\pi)}{\sqrt{m}} > 2 - \epsilon_2\right) > 1 - \epsilon_3. \label{eq:L41f}
\end{equation}
Since $\lim_{n \to \infty}k_n = \infty$ and (\ref{eq:L41e}), there exists $N > N_1$ such that, for any $n > N$, we have
\[
k_n e^{-|\beta| - \epsilon_1} > M \quad \text{ and } \quad \p\left(\textstyle\sum_{i = 1}^{k_n} B_{n, i} > k_n e^{-|\beta| - \epsilon_1}\right) > 1 - \epsilon_3.
\]
Let $E_m$ denote the event $\{\sum_{i = 1}^{k_n} B_{n, i} = m\}$ and $s \coloneqq \lfloor k_n e^{-|\beta| - \epsilon_1} \rfloor + 1$. For any $n > N$, we have
\begin{align*}
&\textstyle\p\Big(\lis(\eta_{k_n}) > (2 - \epsilon_2) \sqrt{k_n e^{-|\beta| - \epsilon_1}}\Big)\\
 \ge \ &\textstyle\sum\limits_{m = s}^{k_n}
 \p\Big(\lis(\eta_{k_n}) > (2 - \epsilon_2) \sqrt{k_n e^{-|\beta| - \epsilon_1}}\ \Big|\ E_m \Big)\cdot
 \p(E_m)\\
 \ge \ &\textstyle\sum\limits_{m = s}^{k_n}
 \p\Big(\lis(\eta_{k_n}) > (2 - \epsilon_2) \sqrt{m}\ \Big|\ E_m \Big)\cdot\p( E_m )\\
 = \ &\textstyle\sum\limits_{m = s}^{k_n}\lambda_m\Big(\lis(\pi) > (2 - \epsilon_2) \sqrt{m}\Big)\cdot\p( E_m )\\
 > \ &\textstyle(1 - \epsilon_3)\sum\limits_{m = s}^{k_n} \p( E_m )\\
 = \ &\textstyle(1 - \epsilon_3)\ \p\Big(\sum\limits_{i = 1}^{k_n} B_{n, i} > k_n e^{-|\beta| - \epsilon_1}\Big)\\
 > \ &\textstyle(1 - \epsilon_3)^2.
\end{align*}
The second inequality above follows since $m \ge s \ge k_n e^{-|\beta| - \epsilon_1}$. The third inequailty follows from (\ref{eq:L41f}) and the fact that $m \ge k_n e^{-|\beta| - \epsilon_1} > M$. Therefore, we have shown that $\lim_{n \to \infty}\p\Big(\lis(\eta_{k_n}) > (2 - \epsilon_2) \sqrt{k_n e^{-|\beta| - \epsilon_1}}\Big) = 1$, and (\ref{eq:L41d}) follows from the fact that, by choosing $\epsilon_1$ and $\epsilon_2$ small enough, $(2 - \epsilon_2) \sqrt{e^{-|\beta| - \epsilon_1}}$ can be arbitrarily close to $2 e^{\frac{-|\beta|}{2}}$.\\

For the second part, we need to show that, for any $\epsilon > 0$,
\begin{equation}
\lim_{n \to \infty} \max_{\bs{b} \in Q(n, k_n)} \n\left(\pi \in S_n : \frac{\lis(\pi_{\bs{b}})}{\sqrt{k_n}} \ge 2 e^{\frac{|\beta|}{2}}+\epsilon\right) = 0.          \label{eq:L42b}
\end{equation}
Similar to the proof of (\ref{eq:L40a}), we can show that
\begin{equation}\label{eq:L40b}
\lim_{n \to \infty}\max\left( q_n^n, 1/q_n^n \right) = e^{|\beta|} < 2.
\end{equation}
The last inequality follows since $|\beta| < \ln{2}$. Thus, for any $0 < \epsilon_1 < \ln{2} - |\beta|$, there exists $N_1 >0$ such that, for all $n > N_1$, we have
\[
\max(q_n^n, 1/q_n^n) < e^{|\beta| + \epsilon_1} < 2.
\]
Given $n > N_1$, for any $\bs{b} \in Q(n, k_n)$, by Lemma \ref{L48}, $\lis(\xi_{\bs{b}}^{(n)})$ and $\lis(\pi_{\bs{b}})$ have the same distribution, where $\xi_{\bs{b}}^{(n)}$ is the point process defined in that lemma. Moreover, by Lemma \ref{L46}, there exists a point process $\zeta_{k_n}$ such that $\xi_{\bs{b}}^{(n)} \le \zeta_{k_n}$ almost surely and $\zeta_{k_n}$ is defined by
\begin{equation}
\zeta_{k_n}(A) \coloneqq \sum_{i = 1}^{k_n}\mathds{1}_A((i, U'_i)) + B_{n,i}\cdot\mathds{1}_A((i, U_i)) \quad \forall A\in \mathcal{B}(\mathbb{N}\times[0, 1]), \label{eq:L42c}
\end{equation}
where $\{U_i\}_{i =1}^{k_n}\cup\{U'_i\}_{i =1}^{k_n} \cup \{B_{n, i}\}_{i = 1}^{k_n}$ are $3k_n$ independent random variables with each $U_i, U'_i$ being uniformly distributed on $[0, 1]$ and each $B_{n, i}$ being a Bernoulli random variable with $\p(B_{n, i} = 1) = \max(q_n^n, 1/q_n^n) - 1$.
Hence, by the monotonicity of $\lis(\cdot)$ on $\mathcal{X}_2$, we have
\[
\n\left(\pi \in S_n : \frac{\lis(\pi_{\bs{b}})}{\sqrt{k_n}} \ge 2 e^{\frac{|\beta|}{2}} + \epsilon \right) 
\le \p\left(\frac{\lis(\zeta_{k_n})}{\sqrt{k_n}} \ge 2 e^{\frac{|\beta|}{2}} + \epsilon \right).
\]
We complete the proof of (\ref{eq:L42b}) as well as Lemma \ref{L40} by showing that, for any $\epsilon > 0$,
\begin{equation}
\lim_{n \to \infty} \p\left(\frac{\lis(\zeta_{k_n})}{\sqrt{k_n}} < 2 e^{\frac{|\beta|}{2}} + \epsilon \right) = 1.  \label{eq:L42d}
\end{equation}
Since, for all $n > N_1$, we have $\p(B_{n, i} = 1) = \max(q_n^n, 1/q_n^n) - 1< e^{|\beta| + \epsilon_1} - 1$, by the law of large numbers, we get
\begin{equation}
\lim_{n \to \infty} \p\left(\textstyle\sum_{i = 1}^{k_n} B_{n, i} < k_n (e^{|\beta| + \epsilon_1} - 1)\right) = 1 . \label{eq:L42e}
\end{equation}
Given $\bs{U}'=(U'_1, \ldots, U'_{k_n})$, $\bs{U}=(U_1, \ldots, U_{k_n})$ and $\bs{B}=(B_{n, 1}, \ldots, B_{n, k_n})$, let $\Lambda(\bs{U}',\bs{U},\bs{B})$ denote the set of points in $\mathbb{R}^{2}$ defined by
\[
\Lambda(\bs{U}',\bs{U},\bs{B}) \coloneqq \{(i, U_i) : i \in [k_n] \text{ and } B_{n, i} = 1\}\bigcup \{(i, U'_i) : i \in [k_n]\}.
\]
By the definition of $\zeta_{k_n}$ and Definition \ref{D48}, we have
\begin{equation}
\lis(\zeta_{k_n}) = \lis(\Lambda(\bs{U}', \bs{U},\bs{B})). \label{eq:L42f}
\end{equation}
Based on $\bs{U}'$, $\bs{U}$ and $\bs{B}$, define another set of points in $\mathbb{R}^{2}$ as follows,
\[
\Lambda^{+}(\bs{U}',\bs{U},\bs{B}) \coloneqq \{(i+1/2, U_i) : i \in [k_n] \text{ and } B_{n, i} = 1\}\bigcup \{(i, U'_i) : i \in [k_n]\}.
\]
Then, we have
\begin{equation}
\lis(\Lambda(\bs{U}', \bs{U},\bs{B})) \le \lis(\Lambda^{+}(\bs{U}', \bs{U},\bs{B})). \label{eq:L42g}
\end{equation}
Since, by Definition \ref{D3}, no two points with the same $x$ coordinate can be both within an increasing subsequence, by increasing the $x$ coordinates of those points in $\Lambda(\bs{U}', \bs{U},\bs{B})$ which reside on the same vertical line as other points by $1/2$, the relative ordering of the shifted point with other points does not change, except the one which has the same $x$ coordinate when unshifted. Combining (\ref{eq:L42f}) and (\ref{eq:L42g}), we have
\begin{equation}
\lis(\zeta_{k_n})  \le \lis(\Lambda^{+}(\bs{U}', \bs{U},\bs{B})). \label{eq:L42h}
\end{equation}
Moreover, conditioned on $\sum_{i = 1}^{k_n} B_{n, i} = m$, by independence of $\bs{U}', \bs{U}$ and $\bs{B}$, it is easily seen that $\lis(\Lambda^{+}(\bs{U}',\bs{U},\bs{B}))$ has the same distribution as $\lis(\pi)$ with $\pi \sim \lambda_{k_n+m}$.
For any $0 < \epsilon_2, \epsilon_3 < 1$, by the result of Vershik and Kerov \cite{VK} again, there exists $M > 0$ such that, for any $k > M$,
\[
\lambda_k\left(\frac{\lis(\pi)}{\sqrt{k}} < 2 + \epsilon_2\right) > 1 - \epsilon_3.
\]
Since $\lim_{n \to \infty}k_n = \infty$ and (\ref{eq:L42e}), there exists $N > N_1$ such that, for any $n > N$, we have
\[
k_n > M \quad \text{ and } \quad
\p\left(\textstyle\sum_{i = 1}^{k_n} B_{n, i} < k_n (e^{|\beta| + \epsilon_1} - 1)\right) > 1 - \epsilon_3.
\]
Let $s \coloneqq \lceil k_n (e^{|\beta| + \epsilon_1} - 1) \rceil - 1$. Recall that $E_m$ denotes the event $\{\sum_{i = 1}^{k_n} B_{n, i} = m\}$. For any $n > N$, we have
\begin{align*}
&\textstyle\p\Big(\lis(\zeta_{k_n}) < (2 + \epsilon_2) \sqrt{k_n e^{|\beta| + \epsilon_1}}\Big)\\
 \ge \ &\textstyle\sum\limits_{m = 0}^{s}
 \p\Big(\lis(\zeta_{k_n}) < (2 + \epsilon_2) \sqrt{k_n e^{|\beta| + \epsilon_1}}\ \Big|\ E_m \Big)
 \cdot\p(E_m)\\
 \ge \ &\textstyle\sum\limits_{m = 0}^{s}
 \p\Big(\lis(\zeta_{k_n}) < (2 + \epsilon_2) \sqrt{k_n + m}\ \Big|\ E_m \Big)\cdot\p(E_m)\\
 \ge \ &\textstyle\sum\limits_{m = 0}^{s} \p\Big(\lis(\Lambda^{+}(\bs{U}', \bs{U},\bs{B})) < (2 + \epsilon_2) \sqrt{k_n + m}\ \Big|\ E_m \Big) \cdot\p(E_m)\\
 = \ &\textstyle\sum\limits_{m = 0}^{s}\lambda_{k_n + m}\Big(\lis(\pi) < (2 + \epsilon_2) \sqrt{k_n + m}\Big)\cdot\p(E_m)\\
 > \ &\textstyle(1 - \epsilon_3)\sum\limits_{m = 0}^{s}\p(E_m)\\
 = \ &\textstyle(1 - \epsilon_3)\ \p\Big(\sum\limits_{i = 1}^{k_n} B_{n, i} < k_n (e^{|\beta| + \epsilon_1} - 1)\Big)\\
 > \ &\textstyle(1 - \epsilon_3)^2.
\end{align*}
The second inequality follows because
\[
k_n + m \le k_n +s < k_n + k_n (e^{|\beta| + \epsilon_1} - 1) = k_n e^{|\beta| + \epsilon_1},
\]
and the third inequality follows from (\ref{eq:L42h}).
Therefore, we have shown that $\lim_{n \to \infty}\p\Big(\lis(\zeta_{k_n}) < (2 + \epsilon_2) \sqrt{k_n e^{|\beta| + \epsilon_1}}\Big) = 1$, and (\ref{eq:L42d}) follows from the fact that, by choosing $\epsilon_1$ and $\epsilon_2$ small enough, $(2 + \epsilon_2) \sqrt{e^{|\beta| + \epsilon_1}}$ can be arbitrarily close to $2 e^{\frac{|\beta|}{2}}$.

\end{proof}

\section{Discussion and open questions}
\begin{itemize}
\item[1.] Consider the partially ordered set $(S_n, \le_L)$, we conjecture the following stochastic dominance of Mallows measure: For any $0< q < q'$, we have $\m \preceq \mu_{n, q'}$, i.e.\,$\m$ is stochastically dominated by $\mu_{n, q'}$. By Strassen's theorem (cf.\,\cite{lindvall}), the conjecture is equivalent to the following statement: there exists a coupling $(X, Y)$ with $X \sim \m$ and $Y 
\sim \mu_{n, q'}$ such that $X \le_L Y$.
\item[2.] In the proof of Corollary \ref{C92}, we show that $\bar{J}$ defined in Theorem \ref{M3} is attained when $\phi(x) = x$ given that $\lim_{n\to\infty} n (1-q_n) = \lim_{n\to\infty} n (1-q'_n) = \beta$. In fact, for any $\beta \in \mathbb{R}$, if $\gamma = 0, \beta, \pm\infty$, taking $\phi(x)$ to be the diagonal of the unit square gives the supremum of the following variational problem
\[
\sup_{\phi \in B^1_{\nearrow}} \int_0^1 \sqrt{\dot{\phi}(x) \rho(x, \phi(x))} \, dx.
\]
Note that, when $\gamma = \pm\infty$, we extend the definition of $\rho(x, y, \beta, \gamma)$ as follows, (We explicitly add $\beta, \gamma$ as the argument of the density $\rho$.)
\[
\rho(x, y, \beta, \pm\infty) \coloneqq \lim_{\gamma \to \pm\infty} \rho(x, y, \beta, \gamma) = \lim_{\gamma \to \pm\infty}\int_0^1 u(x, t, \beta)\cdot u(t, y, \gamma)\,dt.
\]
In fact, it is not hard to show that the above limits exist with
\[
\rho(x, y, \beta, \infty) = u(x, y, \beta) \quad\text{ and }\quad \rho(x, y, \beta, -\infty) = u(x, y, -\beta).
\]
It is unknown to us whether $\phi(x) = x$ solves the above variational problem for arbitrary $\beta, \gamma \in \mathbb{R}$.

\end{itemize}
\section*{Acknowledgement}
I am grateful to my supervisor Nayantara Bhatnagar for her helpful advice and suggestions during the research as well as her guidance in the completion of this paper. I also thank an anonymous referee for pointing out an incorrect proof in the first draft of this paper as well as giving me detailed typographical and grammatical suggestions.
\bibliographystyle{imsart-nameyear}
\bibliography{sampart}

\end{document}